\documentclass[a4paper,12pt]{amsart}
\usepackage{latexsym,amscd,amssymb,url}
\pagestyle{headings}

\textwidth=450pt 
\oddsidemargin=12pt
\evensidemargin=12pt

\setlength{\footskip}{25pt}
\usepackage[all,cmtip]{xy}  
\usepackage{graphicx}
\usepackage{xcolor}
\usepackage{enumerate} 
\definecolor{dblue}{rgb}{0,0,.6}
\usepackage[colorlinks=true, linkcolor=dblue, citecolor=dblue, filecolor = dblue, menucolor = dblue, urlcolor = dblue]{hyperref}

\interfootnotelinepenalty=10000

%
\newtheorem{theorem}{Theorem}

\theoremstyle{plain}

\newtheorem{corollary}[theorem]{Corollary}

\newtheorem{definition}[theorem]{Definition}

\newtheorem{lemma}[theorem]{Lemma}

\newtheorem{proposition}[theorem]{Proposition}
\newtheorem{remark}[theorem]{Remark}

\setcounter{tocdepth}{1}


\newcommand{\del}{\partial}
\newcommand{\Z}{\mathbb Z}

\newcommand{\C}{\mathbb C}

\newcommand{\CP}{\mathbb P}

\newcommand{\im}{\operatorname{im}}

\newcommand{\Aut}{\operatorname{Aut}}

\newcommand{\Sym}{\operatorname{Sym}}
\newcommand{\id}{\operatorname{id}}

\newcommand{\Spec}{\operatorname{Spec}}

\newcommand{\pr}{\operatorname{pr}}

\newcommand{\codim}{\operatorname{codim}}

\newcommand{\CH}{\operatorname{CH}}
\newcommand{\supp}{\operatorname{supp}}

\newcommand{\Frac}{\operatorname{Frac}}
\newcommand{\Val}{\operatorname{Val}}

 \newcommand{\sm}{\operatorname{sm}}

\newcommand{\dashedlongrightarrow}{\xymatrix@1@=15pt{\ar@{-->}[r]&}}
\renewcommand{\longrightarrow}{\xymatrix@1@=15pt{\ar[r]&}}
\renewcommand{\mapsto}{\xymatrix@1@=15pt{\ar@{|->}[r]&}}
\renewcommand{\twoheadrightarrow}{\xymatrix@1@=15pt{\ar@{->>}[r]&}}
\newcommand{\hooklongrightarrow}{\xymatrix@1@=15pt{\ar@{^(->}[r]&}}
\newcommand{\congpf}{\xymatrix@1@=15pt{\ar[r]^-\sim&}}
\renewcommand{\cong}{\simeq}


\begin{document}  
\title[On the rationality problem for quadric bundles]{On the rationality problem for quadric bundles}

\author{Stefan Schreieder}
\address{Mathematisches Institut, LMU M\"unchen, Theresienstr.\ 39, 80333 M\"unchen, Germany.}
\email{schreieder@math.lmu.de}

\date{July 12, 2018; \copyright{\ Stefan Schreieder 2017}}
\subjclass[2010]{primary 14E08, 14M20; secondary 14D06} 
%

\keywords{rationality problem, stable rationality, decomposition of the diagonal, unramified cohomology, L\"uroth problem.}

\begin{abstract} 
We classify all positive integers $n$ and $r$ such that (stably) non-rational complex $r$-fold quadric bundles over rational $n$-folds exist.  
We show in particular that for any $n$ and $r$, a wide class of smooth $r$-fold quadric bundles over $\CP^n_\C$ are not stably rational if 
$r\in [2^{n-1}-1,2^n-2]$.  
In our proofs we introduce a generalization of the specialization method of Voisin and Colliot-Th\'el\`ene--Pirutka which avoids universally  
$\CH_0$-trivial resolutions of singularities.  
\end{abstract}

\maketitle

\section{Introduction} 
A quadric bundle is a flat morphism of projective varieties  $f:X\longrightarrow S$, whose generic fibre is a smooth quadric; we say that such a bundle is smooth if $X$ is smooth over the ground field, which we assume algebraically closed.
We will always assume that the base $S$ is a rational variety. 
It is then an interesting and old problem, which goes back at least to the work of Artin and Mumford \cite{artin-mumford}, to decide whether $X$ is rational as well. 
By Springer's theorem \cite{springer}, $X$ is rational if $f$ admits a rational multisection of odd degree.
By a theorem of Lang \cite{lang} (cf. \cite[II.4.5]{serre}), a section exists whenever $r> 2^n-2$, where $n=\dim(S)$ and $r$ denotes the dimension of the fibres of $f$. 

Our main result is as follows.

\begin{theorem}\label{thm:classification:2}
Let $n$ and $r$ be positive integers with $r\leq 2^n-2$, and let $m\leq n$ be the unique integer with $2^{m-1}-1\leq r\leq 2^{m}-2$. 
Then there are smooth unirational complex $r$-fold quadric bundles $X$ over $S=\CP^{n-m}_{\C}\times \CP_{\C}^{m}$, such that $X$ is not stably rational.  
\end{theorem}

As aforementioned, any complex $r$-fold quadric bundle over a rational base of dimension $n$ with $r>2^n-2$ is rational 
by Lang's theorem. 
This shows that the condition on $n$ and $r$ in Theorem \ref{thm:classification:2} is optimal; that is, a smooth (stably) non-rational complex  $r$-fold quadric bundle over a rational base of dimension $n$ exists if and only if $r\leq 2^n-2$. 

For $r=1,2$ (resp.\ $r=3,4,5,6$), the first examples of (stably) irrational quadric bundles over rational bases have been produced by Artin--Mumford \cite{artin-mumford} (resp.\ Colliot-Th\'el\`ene--Ojanguren \cite{CTO}).
While those examples are singular, examples of smooth quadric bundles that are (stably) irrational have up till now only been produced for $r=1$ and $r=2$. 
Indeed, while the rationality problem is solved for many types of smooth conic bundles \cite{I,voisin,HKT,beauville4,BB,ABGP,AO}, even for smooth quadric surface bundles, progress has been made only recently by Hassett, Pirutka and Tschinkel. 
They proved that the very general fibres of three families of quadric surface bundles over $\CP_\C^2$, degenerated over plane octic curves, are not stably rational \cite{HPT,HPT2,HPT3}. 
Each family contains a dense set of smooth rational fourfolds and so they obtained the first examples of non-rational varieties with rational deformations.
The next result shows more generally that  for any positive integer $r$, deformation invariance of rationality fails for $r$-fold quadric bundles over rational bases.

\begin{theorem} \label{thm:defo}
Let $r$ be a positive integer.
Then there is a smooth complex projective family $\pi:\mathcal X\longrightarrow B$ of smooth complex varieties such that each fibre $X_b=\pi^{-1}(b)$ is an $r$-fold quadric bundle over some complex projective space, satisfying the following:
\begin{enumerate}
\item for very general $t\in B$, the fibre $X_t$ is not stably rational;
\item the set $\{ b\in B\mid X_b\ \text{is rational}\}$ is dense in $B$ for the analytic topology.
\end{enumerate}
\end{theorem}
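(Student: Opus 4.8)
The plan is to realize the smooth, not stably rational examples of Theorem~\ref{thm:classification:2} as members of a linear system of quadric bundles over a projective space, to deduce~(1) from the specialization method, and to establish~(2) by producing an analytically dense set of members of this linear system that carry a rational section and are therefore rational. Concretely: given $r$, let $k$ be the unique integer with $2^{k-1}-1\le r\le 2^{k}-2$ and apply Theorem~\ref{thm:classification:2} with $n=k$, so that the base $\PP^{n-k}\times\PP^{k}$ is $\PP^{0}\times\PP^{k}=\PP^{k}$; this yields a smooth unirational $r$-fold quadric bundle $X_{0}\to\PP^{k}$ that is not stably rational. As an $r$-fold quadric bundle over $\PP^{k}$, $X_{0}$ is the zero locus of a section $q_{0}$ of $\OO_{\PP(\mathcal E)}(2)\otimes p^{*}\mathcal L$ for some rank-$(r+2)$ vector bundle $\mathcal E$ and line bundle $\mathcal L$ on $\PP^{k}$, and after twisting $\mathcal E$ we may assume $\OO_{\PP(\mathcal E)}(2)\otimes p^{*}\mathcal L$ is ample and globally generated. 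Put $V=H^{0}(\PP(\mathcal E),\OO_{\PP(\mathcal E)}(2)\otimes p^{*}\mathcal L)$ and let $B\subseteq\PP(V)$ be the nonempty Zariski-open locus of those $[q]$ for which the zero scheme $X_{q}\subset\PP(\mathcal E)$ is a smooth $r$-fold quadric bundle over $\PP^{k}$; then $b_{0}:=[q_{0}]\in B$. The incidence variety $\mathcal X=\{([q],x)\colon q(x)=0\}$ is a projective bundle over $\PP(\mathcal E)$, hence smooth, and the restriction $\pi\colon\mathcal X|_{B}\to B$ is flat with smooth, geometrically integral fibres, hence a smooth projective morphism whose fibre over $[q]$ is the $r$-fold quadric bundle $X_{q}\to\PP^{k}$. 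Thus $\pi$ is a family of the required shape with $X_{b_{0}}=X_{0}$.

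For~(1): by the proof of Theorem~\ref{thm:classification:2} the smooth variety $X_{0}$ admits no integral decomposition of the diagonal, and being smooth it serves as its own universally $\CH_{0}$-trivial resolution. Applying the specialization theorem for decompositions of the diagonal (Voisin; Colliot-Th\'el\`ene--Pirutka; or the generalization established in this paper) to $\pi\colon\mathcal X|_{B}\to B$ at the point $b_{0}$ shows that the very general fibre $X_{t}$ also admits no decomposition of the diagonal, and in particular is not stably rational.

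For~(2): a smooth quadric of positive dimension over a field $K$ is $K$-rational as soon as it has a $K$-point, so, since $\PP^{k}$ is rational, $X_{q}$ is rational whenever $X_{q}\to\PP^{k}$ has a rational section. Such a section is a saturated line subbundle $\mathcal M\hookrightarrow\mathcal E$ whose induced section $\PP^{k}\dashrightarrow\PP(\mathcal E)$ factors through $X_{q}$, i.e.\ $q|_{\mathcal M}=0$; for fixed $\mathcal M$ this is a linear condition on $q\in V$, defining a linear subvariety $B_{\mathcal M}\subseteq B$. Hence the set of rational members of the family contains $R:=\bigcup_{\mathcal M}B_{\mathcal M}$, the union over the countably many bounded families of saturated line subbundles of $\mathcal E$. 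The crux is to prove that $R$ is dense in $B$ for the analytic topology: given $[q]\in B$ and an analytic neighbourhood $U$ of it, one fixes a point $p\in\PP^{k}$ over which the fibre of $X_{q}$ is a smooth quadric together with a line $\ell$ in that fibre, and shows that an arbitrarily small perturbation $q'$ of $q$ inside $V$ can be chosen so that $X_{q'}\to\PP^{k}$ carries a rational section passing through $\ell$---necessarily of large degree, since by~(1) a very general member has no section at all---whence $[q']\in R\cap U$. As each $B_{\mathcal M}$ is a proper subvariety of $B$, the set $R$ is a meagre, hence non-open, dense subset of $B$; together with~(1) this yields~(2) and completes the proof.

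The main obstacle is the density assertion in~(2). Lang's theorem provides a rational section of every $r$-fold quadric bundle over a base of dimension $n$ once $r>2^{n}-2$, but here $r\le 2^{k}-2$, so a very general member of the family has no rational section whatsoever and rationality can hold only along a countable union of proper subvarieties of $B$; the real work is to show that line subbundles of $\mathcal E$ of growing degree nevertheless produce sections whose vanishing loci $B_{\mathcal M}$ accumulate at every point of $B$, so that $R$ is analytically dense. The remaining ingredients---constructing the linear system and deducing~(1) by applying the specialization method to the smooth example of Theorem~\ref{thm:classification:2}---are routine given the results recalled in the introduction.
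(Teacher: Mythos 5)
Your reduction of part~(1) to Theorem~\ref{thm:classification:2} plus a specialization argument is essentially right, and your construction of the linear system $B\subset \CP(V)$ is the same in spirit as the paper's. But part~(2) is where the proof lives, and your argument there has a genuine gap: you correctly identify the problem (a very general member has no section, so the rational members form a countable union $\bigcup_{\mathcal M}B_{\mathcal M}$ of proper subvarieties, and one must show this union is analytically dense), but the purported proof of density --- perturbing $q$ so that the new bundle carries a rational section through a prescribed line over a prescribed point, with ``necessarily large degree'' --- is never substantiated. Why should a small perturbation of $q$ land on some $B_{\mathcal M}$? That is exactly the Noether--Lefschetz-type density statement that needs an actual mechanism; asserting it is the whole difficulty.

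The paper's proof of the corresponding statement (Theorem~\ref{thm:higherquadric}) supplies that mechanism, and it is not of the kind you sketch. The degree vector $(d_i)$ is arranged so that $(d_0,d_1,d_2,d_n)=(0,2,2,4)$ and all entries of the corresponding $4\times 4$ minor $M(A)$ lie in $\C[x_0,x_1,x_2]$; this gives a dominant \emph{linear} projection $M:B\longrightarrow \CP(W)$ onto a parameter space of quadric surface bundles of type $(0,2,2,4)$ over $\CP^2$. A rational section of the surface bundle defined by $M(A)$ clearly yields a rational section of the big bundle defined by $A$. The density of rational-section loci in $\CP(W)$ is then a consequence of three nontrivial inputs: Springer's theorem (an odd-degree rational multisection gives a rational section), the integral Hodge conjecture in codimension two for quadric surface bundles over surfaces (Colliot-Th\'el\`ene--Voisin), and Voisin's density theorem for Hodge loci giving $(2,2)$-classes of odd degree on the generic fibre (Proposition~\ref{prop:voisin}). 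None of these ingredients appears in your argument; without them --- or some other concrete substitute --- the density claim is unproved. (The paper also treats $r=1,2$ separately by quoting the HPT examples, since the Hodge-theoretic density input is stated only for quadric surface bundles over $\CP^2$, i.e.\ effectively for $r\geq 3$ with $d$ even.)
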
 
 

More explicitly, we  discuss now the rationality problem for a natural and interesting class of $r$-fold quadric bundles over $\CP_\C^n$. 
We start with a generically non-degenerate line bundle valued quadratic form $q:\mathcal E\longrightarrow \mathcal O_{\CP^n_\C}(l)$, where $\mathcal E:=\bigoplus_{i=0}^{r+1}\mathcal O_{\CP^n_\C}(-l_i)$ is a split vector bundle on $\CP^n_\C$.
If $q_s\neq 0$ for all $s\in \CP^n_\C$, then $X:=\{q=0\}\subset \CP(\mathcal E)$ is an $r$-fold quadric bundle over $\CP^n_\C$.
We may identify $q$ with a symmetric matrix $A=(a_{ij})$ of homogeneous polynomials of degrees $|a_{ij}|=l_i+l_j+l$. 
Locally over $\CP^n_\C$, $X$ is given by 
$$
\sum_{i,j=0}^{r+1} a_{ij}z_iz_j =0 .
$$
The deformation type of $X$ depends only on the integers $d_i:=2l_i+l$, which have all the same parity,  
cf.\ Section \ref{subsec:quadricbundles} below.
We call any such bundle of type $(d_i)_{0\leq i\leq r+1}$.
We then have the following; see Theorem \ref{thm:higherquadrics:2} and Remark \ref{rem:defotype} below for a more general statement.  

\begin{theorem} \label{thm:type}
Let $n,r$ be positive integers with 
$2^{n-1}-1\leq r \leq 2^{n}-2$, and let $d_0,\dots ,d_{r+1}$  be integers of the same parity such that $d_i\geq 2^n+n-1$ for all $i$.
Then a very general complex $r$-fold quadric bundle of type $(d_i)_{0\leq i\leq r+1}$ over $\CP^n_\C$ is not stably rational.
\end{theorem}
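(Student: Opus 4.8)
The plan is to prove non-stable-rationality by exhibiting a single special quadric bundle $X_0$ of the given type over $\CP^n$ which admits a nontrivial obstruction of the kind detected by the generalized specialization method announced in the abstract, and then to spread this out over the very general member of the family.

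First I would set up the degeneration. The key is to choose the symmetric matrix $A_0=(a_{ij})$ so that the associated quadric bundle $X_0$ degenerates, over a carefully chosen hypersurface $D\subset\CP^n$, in a way that makes its generic fibre a quadric over the function field $\C(\CP^n)$ with a prescribed nontrivial class in $H^{n}_{nr}$ (unramified cohomology), or more precisely so that the reduction mod $2$ of the relevant Clifford-type invariant is a nonzero symbol. The arithmetic heart is the theorem of Lang quoted in the introduction together with the constraint $2^{n-1}-1\le r\le 2^n-2$: with $r+2$ variables one is exactly at the boundary where a quadratic form over a $C_n$-field need not be isotropic, so one can arrange the generic fibre of $X_0$ to be an anisotropic $(r+1)$-fold quadric whose even Clifford invariant is the symbol $(f_1,\dots,f_n)$ for suitable polynomials $f_i$. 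The hypothesis $d_i\ge 2^n+n-1$ is precisely what gives enough freedom in the degrees $|a_{ij}|=d_i+d_j-2l$ wait — $|a_{ij}|$ in terms of the $d_i$ — to realize such a form while keeping $X_0$ mildly singular (only along $f$ restricted to $D$, with quadric-cone fibres of corank one).

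Second, and this is where the announced generalization enters, I would compute the obstruction directly on $X_0$ without resolving its singularities. The classical Colliot-Th\'el\`ene--Pirutka / Voisin method needs a universally $\CH_0$-trivial resolution; here the point of the new specialization argument is to show that a nonzero unramified cohomology class on the generic fibre of $f_0\colon X_0\to\CP^n$ already obstructs a decomposition of the diagonal of $X_0$, provided the singular locus of $X_0$ is not too large (so that $\CH_0$ is controlled despite the singularities). So the steps are: (i) identify $\Sing(X_0)$ and check it is the expected low-dimensional locus; (ii) verify that $H^n_{nr}(\C(X_0)/\C,\Z/2)\ne 0$ by a residue computation along $D$ and the components of the discriminant, using that the residue of the symbol detecting the anisotropic form does not vanish; (iii) apply the generalized specialization theorem to conclude $X_0$ admits no decomposition of the diagonal, hence is not stably rational; (iv) since non-existence of a decomposition of the diagonal is a specialization-stable (``opens up'') property, deduce the same for the very general member $X$ of the family of type $(d_i)$.

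The main obstacle I expect is step (i)+(iii): arranging the special matrix $A_0$ so that $X_0$ is simultaneously (a) degenerate enough to carry the symbol class in $H^n_{nr}$ and (b) regular enough that its singularities do not destroy the $\CH_0$-triviality bookkeeping needed for the new specialization method — and then proving cleanly that the method applies in this singular setting. Concretely this means controlling the codimension and the fibres of $f_0$ over the discriminant, and showing the total space $X_0$ has at worst isolated or otherwise ``mild'' (e.g. $\CH_0$-universally trivial on resolution, or directly handled) singularities. A secondary technical point is the residue computation in (ii): one must check that the chosen polynomials $f_1,\dots,f_n$ of the right degrees (compatible with $d_i\ge 2^n+n-1$) give a symbol whose iterated residues along the relevant divisors are nonzero, i.e. that $(f_1,\dots,f_n)$ is a nonzero class in $H^n(\C(\CP^n),\Z/2)$ and stays unramified on $X_0$ — this is a Faddeev/Bloch--Ogus exact sequence argument once the geometry of the discriminant is pinned down.
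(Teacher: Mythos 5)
Your overall strategy matches the paper's: specialize the very general bundle of type $(d_i)$ to an explicit bundle $Y$ over $\CP^n$ with nontrivial degree-$n$ unramified $\Z/2$-cohomology, and apply the new resolution-free specialization theorem to obstruct a decomposition of the diagonal for the very general member. But two of the ideas you flag as ``main obstacles'' are exactly where the paper's proof does something you have not yet found, and as written your plan would get stuck there.

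First, you worry that $X_0$ must be ``regular enough that its singularities do not destroy the $\CH_0$-triviality bookkeeping,'' and you anticipate controlling $\Sing(X_0)$. This is a misreading of what the generalized specialization method requires. The paper's Proposition \ref{prop:degeneration} replaces the $\CH_0$-triviality of a full resolution $\tau:\widetilde Y\to Y$ by a much weaker, purely cohomological condition: one only needs $\tau$ to be $\CH_0$-trivial over some open $U\subset Y$ (in the application, $U$ is the smooth locus, so the identity works), together with the vanishing $\alpha|_{E_i}=0$ on each exceptional component $E_i$ of $\tau$ over $Y\setminus U$. The decisive point (Proposition \ref{prop:alpha':CTO}) is that this vanishing holds \emph{automatically} for every prime divisor $E_i$ of $\widetilde Y$ that does not dominate $\CP^n$: since the class $\alpha'$ on $\widetilde Y$ is pulled back from a class on $\C(\CP^n)$ that is unramified along the image of $E_i$, it factors through $H^n$ of the residue field $\kappa(x)$ of a codimension-$\geq 1$ point $x\in\CP^n$, which vanishes by cohomological dimension (Theorem \ref{thm:milne}). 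Thus there is literally no condition on the singularities of $Y$ at points not dominating the base; the only hypothesis is about the generic fibre. Your plan still treats this as a delicate $\CH_0$ problem, which it is not.

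Second, in step (ii) you speak of a single symbol $(f_1,\dots,f_n)$ that is ``nonzero'' and ``stays unramified on $X_0$.'' A nonzero symbol in $H^n(\C(\CP^n),\Z/2)$ is never unramified over $\C$, so you cannot simply pull it back and hope it is unramified on $Y$. The mechanism that makes a nontrivial unramified class appear on the generic fibre is the two-symbol CTO construction: one builds $\alpha_1=(a_1,\dots,a_{n-1},b_1)$ and $\alpha_2=(a_1,\dots,a_{n-1},b_2)$, nonzero, so that at \emph{every} geometric discrete valuation on $\C(\CP^n)$ at least one of $\alpha_1,\alpha_2$ has vanishing residue; one takes the quadric to be a Pfister neighbour of $\langle\langle a_1,\dots,a_{n-1},b_1b_2\rangle\rangle$; then the Orlov--Vishik--Voevodsky theorem (Theorem \ref{thm:OVV}) forces $f^\ast\alpha_1=f^\ast\alpha_2$ over the quadric's function field, so the common pullback has all residues zero (Proposition \ref{prop:CTO}) and is nonzero. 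Moreover, actually realizing this with the degree budget $d_i\geq 2^n+n-1$ requires the explicit products of linear forms and the ``balanced'' polynomials $\tilde c_i$ of Definitions \ref{def:cici'etc} and \ref{def:citilde}, whose degrees are all exactly $2^n+n-1$ (Lemma \ref{lem:bounds-degree}); without that construction there is no reason the given bound $d_i\geq 2^n+n-1$ suffices. So you have the right frame, but you are missing both the precise cohomological vanishing mechanism that removes the singularity hypothesis and the Pfister/OVV two-symbol device that produces the nontrivial unramified class in the first place.
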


The lower bound $2^n+n-1$ on the degrees is bounded from above by $n+2r+1$.
As an example, we thus see that for $n,r$ as above, very general complex hypersurfaces $X\subset \CP^n_\C\times \CP^{r+1}_\C$ of bidegree $(d,2)$ with $d\geq n+2r+1$ are not stably rational.  
In contrast,  if $r\geq 2$, some smooth hypersurfaces of that kind are rational; in fact, for $r\geq 2$, all examples in Theorem \ref{thm:type} have rational deformations, see Corollary \ref{cor:rational-defotype} below. 

As another application, we consider singular hypersurfaces $X\subset \CP^{N+1}_\C$ of degree $d$.
If $X$ is not a cone and contains a singular point whose multiplicity is roughly as large as the degree, then $X$ tends to be quite close to being rational, no matter how large $d$ is.  
For instance, a single point $x\in X$ of multiplicity $d-1$ forces $X$ to be rational.
In contrast, Theorem \ref{thm:type} implies 
that many degree $d$ hypersurfaces with points of multiplicity $d-2$ are not even stably rational.

\begin{corollary} \label{cor:hyper}
Let $n$ and $r$ be positive integers with $2^{n-1}-1\leq r\leq  2^n-2$.
Set $N:=n+r$ and $m:=2^n+n+1$. 
Then a very general complex hypersurface $X\subset \CP^{N+1}_\C$ of degree $d\geq m$ and with multiplicity $d-2$ along an $r$-plane is not stably rational.
\end{corollary}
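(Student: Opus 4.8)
The plan is to realize the hypersurface $X\subset\CP^{N+1}$ as a quadric bundle over $\CP^n$ and then apply Theorem \ref{thm:type}. First I would fix homogeneous coordinates $(x_0:\dots:x_n)$ on $\CP^n$ and $(z_0:\dots:z_{r+1})$ on $\CP^{r+1}$, and observe that a hypersurface $X\subset\CP^{N+1}=\CP^{n+r+1}$ of degree $d$ with multiplicity $d-2$ along the linear subspace $L=\{z_0=\dots=z_{r+1}=0\}\cong\CP^{n-1}$ is, away from $L$, naturally a family of quadrics in the $z$-variables. Concretely, such an $X$ is defined by $\sum_{i,j=0}^{r+1} a_{ij}(x)\,z_iz_j=0$ where $a_{ij}$ is homogeneous of degree $d-2$ in the $x$'s; this is exactly the generic member of the linear system of degree-$d$ hypersurfaces singular to order $d-2$ along the $r$-plane $L$ (note $\dim L = n-1$, and the projective span convention should be matched carefully — I would double-check whether "$r$-plane" in the statement refers to $L$ or to a fibre $\CP^{r+1}$, and adjust $N=n+r$ accordingly). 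Projection from $L$ exhibits the blow-up $\mathrm{Bl}_L X$ as a quadric bundle $Y\to\CP^n$ with $Y\to X$ birational.

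The second step is to check that $Y\to\CP^n$ is, for a very general choice of the $a_{ij}$, an $r$-fold quadric bundle of type $(d_i)$ with all $d_i=d$. Indeed, taking $\mathcal E=\OO_{\CP^n}(-1)^{\oplus(r+2)}$ and the quadratic form $q:\mathcal E\to\OO_{\CP^n}(d-2)$ given by the matrix $A=(a_{ij})$ with $|a_{ij}|=d-2=d_i+d_j+l$ where $d_i=1$... — more precisely, in the normalization of Section \ref{subsec:quadricbundles} one has $d_i=2l_i+l$, and choosing $l_i=0$, $l=d-2$ gives all $d_i=d-2$; I should instead record that the relevant invariant is the common degree, and a very general such $A$ yields a bundle with $q_s\neq 0$ for all $s$, hence a smooth $r$-fold quadric bundle over $\CP^n$ with the required deformation type. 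The numerical condition of Theorem \ref{thm:type} is $d_i\geq 2^n+n-1$; matching this with the degree of $X$ gives the bound $d\geq m$ for an appropriate $m$, and I would verify that $m=2^n+n+1$ is the correct translation once the shift from $d$ to the bundle-degree $d_i$ (here $d-2$, or $d$ depending on the chosen normalization) is taken into account — this accounts for the "$+2$" discrepancy between $2^n+n-1$ and $2^n+n+1$.

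The third step is to transfer non-stable-rationality from $Y$ to $X$. Since $Y\to X$ is birational and $Y$ is smooth (for very general coefficients), $X$ is stably rational if and only if a smooth projective birational model of $X$ is, and $Y$ is such a model; hence stable non-rationality of $Y$, supplied by Theorem \ref{thm:type} under the stated inequalities $2^{n-1}-1\leq r\leq 2^n-2$ and $d\geq m$, forces stable non-rationality of $X$. Finally I would note that "very general $X$ in the linear system" corresponds to "very general $(a_{ij})$", which is exactly the genericity hypothesis under which Theorem \ref{thm:type} applies, so the two notions of "very general" match.

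The main obstacle I expect is bookkeeping rather than conceptual: getting the dictionary between the degree $d$ of $X\subset\CP^{N+1}$, the multiplicity $d-2$ along the $r$-plane, the dimension count $N=n+r$, and the bundle type $(d_i)$ exactly right, so that the hypothesis $d_i\geq 2^n+n-1$ of Theorem \ref{thm:type} becomes precisely $d\geq 2^n+n+1$. One must be careful that the blow-up of $X$ along the linear subspace is genuinely smooth and genuinely a quadric bundle (no unexpected base locus or non-flatness) for the very general member, and that taking "very general" in the hypersurface family is compatible with taking "very general" among quadric bundles of the given type — both reduce to avoiding a countable union of proper closed subsets in the same parameter space, so this is routine but needs to be stated.
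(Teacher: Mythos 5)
Your overall strategy — realize $X$ birationally as a quadric bundle over $\CP^n$ by projecting away from the singular $r$-plane, then invoke Theorem \ref{thm:type} — is exactly the paper's route, which packages the first step as Lemma \ref{lem:singhyper}. But the execution of that first step contains genuine errors that you flag as ``bookkeeping'' but that actually change the birational model.

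Concretely: you take coordinates $(x_0:\dots:x_n)$ and $(z_0:\dots:z_{r+1})$, which is $n+r+3$ coordinates for $\CP^{n+r+1}$ — one too many — and you set $L=\{z_0=\dots=z_{r+1}=0\}$, which is an $(n-1)$-plane (or $n$-plane depending on how one fixes the count), not the $r$-plane along which $X$ is singular. You must project from the $r$-plane $P$ where $X$ has multiplicity $d-2$; that projection has target $\CP^{(n+r+1)-(r+1)}=\CP^n$ and fibres $\CP^{r+1}$ through $P$, so the residual intersection with $X$ in each fibre is a quadric. Worse, the equation you write, $\sum_{i,j=0}^{r+1}a_{ij}(x)z_iz_j=0$ with all $|a_{ij}|=d-2$, does not describe a hypersurface of $\CP^{N+1}$ singular along an $r$-plane; it is the equation of a bidegree-$(d-2,2)$ hypersurface of $\CP^n\times\CP^{r+1}$, i.e.\ a quadric bundle of type $(d-2,\dots,d-2)$, which is Corollary \ref{cor:PlxPr} rather than Corollary \ref{cor:hyper}. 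The correct affine equation of $X$ near a point of $\CP^n$ is $\sum_{i,j\leq r}a_{ij}y_iy_j+\sum_{k\leq r}2a_{k,r+1}y_k+a_{r+1,r+1}$ with $|a_{ij}|=d-2$, $|a_{k,r+1}|=d-1$, $|a_{r+1,r+1}|=d$; after homogenizing in the $y$'s with a new variable $y_{r+1}$ one obtains a quadric bundle of type $(d-2,\dots,d-2,d)$ — entries of two different degrees, exactly as in Lemma \ref{lem:singhyper} with $d$ there replaced by $d-2$. That the final bound still reads $d\geq 2^n+n+1$ is a coincidence of the fact that Theorem \ref{thm:type} only requires $\min_i d_i\geq 2^n+n-1$, and $\min_i d_i=d-2$ in both your (incorrect) type $(d-2,\dots,d-2)$ and the correct type $(d-2,\dots,d-2,d)$. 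The type computation is not cosmetic: it is needed, via Definition \ref{def:type} and Lemma \ref{lem:q=0:quadricbundle}, to know which moduli space of quadric bundles the birational model of a very general $X$ lives in, so that the very-general hypothesis of Theorem \ref{thm:type} applies. Replacing your in-line construction by a citation of Lemma \ref{lem:singhyper} (or by a corrected version of your computation) closes the gap.
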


The above hypersurfaces are birational to $r$-fold quadric bundles over $\CP^n_\C$, cf.\ Lemma \ref{lem:singhyper}.
The upper bound $r\leq 2^n-2$ is thus sharp by the aforementioned result of Lang \cite{lang}. 
The lower bound $m$ on the degree satisfies
$m \in [N+3,2N-n+3]$ and it lies on the boundary of that interval if $r=2^{n}-2$ or $r=2^{n-1}-1$, respectively.

Building on work of Koll\'ar \cite{kollar}, Totaro showed \cite{totaro} that a very general  smooth complex hypersurface $X\subset \CP^{N+1}_\C$ of degree $d\geq 2\lceil (N+2)/3 \rceil $ is not stably rational.  
Our bounds differ roughly by a factor $\lambda \in [\frac{3}{2},3]$.
While smooth hypersurfaces of such large degree are not even uniruled, our singular hypersurfaces are rationally connected.

The proofs of the above results are based on two main ingredients, which we explain in the following two subsections respectively.

\subsection{Examples \`a la Artin--Mumford and Colliot-Th\'el\`ene--Ojanguren in higher dimensions} \label{subsec:singbundles}
For any complex projective variety $Y$, there are unramified cohomology groups $H^ i_{nr}(\C(Y)\slash \C,\Z/l)$, which are stable birational invariants of $Y$.
These invariants have been introduced by Colliot-Th\'el\`ene and Ojanguren \cite{CTO} in their reinterpretation of the celebrated Artin--Mumford example \cite{artin-mumford}.
The results in \cite{artin-mumford} and \cite{CTO} show (cf.\ Lemma \ref{lem:Pfister} below) that for $n=2$ and $r=1,2$, or $n=3$ and $r=3,4,5 ,6$, 
there is a singular unirational $r$-fold quadric bundle $Y$ over $\CP^n_\C$ with $H^n_{nr}(\C(Y)\slash \C,\Z\slash 2)\neq 0$. 
For $n=r=2$, different examples with the same property have recently been constructed by Pirutka \cite{Pirutka} 
and Hassett--Pirutka--Tschinkel \cite{HPT}.

Using an algebraic approach of Peyre \cite{peyre}, 
Asok showed that for arbitrary positive integers $n$ and $r$ with $2^{n-1}-1\leq r\leq 2^{n}-2$, 
there is a collection of singular unirational $r$-fold quadric bundles $Y_1,\dots ,Y_s$ over $\CP^{2n}_\C$, with $s=\binom{2n}{n}-1$, such that their common fibre product over $\CP^{2n}_\C$ has nontrivial unramified cohomology in degree $n$, see \cite[Theorem 4.2]{asok} and Lemma \ref{lem:Pfister} below.  

For $r\geq 7$, $r$-fold quadric bundles over rational bases with nontrivial unramified cohomology are not known.
Generalizing \cite{artin-mumford} and \cite{CTO}, the next result provides such examples for any $r$.

\begin{theorem}\label{thm:H_nr}
Let $n$ and $r$ be positive integers with $2^{n-1}-1\leq r\leq 2^{n}-2$.
Then there is a unirational complex projective $r$-fold quadric bundle $Y\longrightarrow \CP^n_\C$ with 
$$
H^n_{nr}(\C(Y)\slash \C,\Z\slash 2)\neq 0 .
$$ 
\end{theorem}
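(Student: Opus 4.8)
The strategy is to reduce the case of general $(n,r)$ to a single well-chosen quadric bundle built out of a Pfister form, and then to compute its unramified cohomology by a residue computation. The starting point is the classical construction: over $\CP^n$ with homogeneous coordinates $x_0,\dots,x_n$, one builds a quadratic form that is ``generically'' an $n$-fold Pfister form $\langle\langle f_1,\dots,f_n\rangle\rangle$ for suitable rational functions $f_i\in\C(\CP^n)$, so that the associated quadric bundle $Y\to\CP^n$ has generic fibre an isotropic-away-from-a-divisor quadric whose nontrivial class in $H^n_{nr}$ is detected by the symbol $(f_1,\dots,f_n)\in H^n(\C(\CP^n),\Z/2)$. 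The standard choice (going back to Artin--Mumford for $n=2$ and to the Pfister-form examples for general $n$, see Lemma~\ref{lem:Pfister}) is something like $f_i = x_i/x_0$ composed with a suitable quadratic or higher-degree twist so that the symbol $(f_1,\dots,f_n)$ is unramified on a smooth model; here the degree hypothesis $d_i\ge 2^n+n-1$ in Theorem~\ref{thm:type} is not needed, since we only want \emph{some} bundle, not a very general one, so I would allow the $Y$ here to be quite singular and specifically engineered.

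**Key steps, in order.** First, fix a Pfister form $\phi=\langle\langle f_1,\dots,f_n\rangle\rangle$ over $K=\C(\CP^n)$ of rank $2^n$ whose Arason--Pfister class is nonzero in $H^n(K,\Z/2)$; since $2^n-2\ge r$, the subform of $\phi$ of rank $r+2$ obtained by deleting $2^n-r-2$ entries is still a form over $K$ whose quadric is anisotropic over $K$ (a rank $\ge 2$ subform of an anisotropic Pfister form stays anisotropic as long as it has rank $\le 2^n$; I need the subform to be large enough to ``see'' the symbol, which is where $r\ge 2^{n-1}-1$ enters — by the Arason--Pfister Hauptsatz and properties of linked Pfister forms a subform of rank $> 2^{n-1}$ of an anisotropic $n$-fold Pfister form has the same Witt-class behaviour detecting $e_n$). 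Second, clear denominators to spread this subform out to a line-bundle-valued quadratic form on $\CP^n$, giving a projective quadric bundle $Y\to\CP^n$ of relative dimension $r$ with $\C(Y)$ the function field of that anisotropic quadric. Third, invoke the residue/localization sequence for unramified cohomology together with the known description of $H^*_{nr}$ of a quadric: by a theorem of Kahn--Rost--Sujatha (or Arason's computation in low degree, used already in \cite{CTO}), the kernel of $H^n(K,\Z/2)\to H^n(\C(Y),\Z/2)$ is generated by the Pfister class of $\phi$, while the whole symbol $(f_1,\dots,f_n)$ becomes unramified on a good model of $Y$ precisely because passing to the quadric function field kills its residues along the bad locus; hence $H^n_{nr}(\C(Y)/\C,\Z/2)\ni (f_1,\dots,f_n)\ne 0$. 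Fourth, check unirationality of $Y$: a quadric bundle with a rational point over the function field of a rational base (which one arranges by choosing one $f_i$, or a partial specialization, to make the generic fibre isotropic over a purely transcendental extension) is unirational over $\C$ — equivalently, $Y$ is unirational because it is birational to a quadric over the rational field $\C(x_1,\dots,x_n)$, and every quadric over a field containing a square root of $-1$ and of rank $\ge 3$ is rational, hence so is a quadric over $\C(\CP^n)$ after one quadratic base change, giving unirationality.

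**Main obstacle.** The delicate point is the non-vanishing of the symbol in $H^n_{nr}(\C(Y)/\C,\Z/2)$, i.e.\ simultaneously that $(f_1,\dots,f_n)$ does \emph{not} die in $H^n(\C(Y),\Z/2)$ and that it \emph{is} unramified on a projective model of $Y$. The first half is governed by the precise statement that the only relations introduced by the quadric function field $\C(Y)=K(q)$ come from the Pfister form $\phi$ containing the subform $q$ — one must ensure the deleted entries do not accidentally make $q$ a ``small'' form whose function field kills more of $H^n$ than desired; this is exactly where the hypothesis $r\ge 2^{n-1}-1$ is used, via the structure theory of subforms of anisotropic Pfister forms and the Arason--Pfister Hauptsatz (a proper subform of rank $\le 2^{n-1}$ could be e.g. a Pfister neighbour of a \emph{smaller} Pfister form and kill $e_n$ prematurely). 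The second half — unramifiedness — requires an explicit choice of the $f_i$ as ratios of linear or quadratic forms and a blow-up/normalization argument at the degeneration divisor of $q$, checking residues chart by chart; I expect this bookkeeping, rather than any conceptual difficulty, to be the bulk of the work, and it is precisely the classical Artin--Mumford--Colliot-Th\'el\`ene--Ojanguren computation carried out in $n$ variables with $r+2$ of the $2^n$ Pfister slots retained.
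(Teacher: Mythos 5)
Your Step 3 contains the central gap, and it is fatal to the argument as stated. You propose to take the symbol $(f_1,\dots ,f_n)\in H^n(K,\mu_2^{\otimes n})$, where $\phi=\langle\langle f_1,\dots ,f_n\rangle\rangle$ is the ambient Pfister form, and claim that its pullback to $H^n(\C(Y),\mu_2^{\otimes n})$ is a nonzero unramified class. But the class $(f_1,\dots ,f_n)$ \emph{is} the Arason invariant $e_n(\phi)$, and by the very theorem you invoke (Orlov--Vishik--Voevodsky, Theorem~\ref{thm:OVV} in the paper) the kernel of $f^\ast:H^n(K,\mu_2^{\otimes n})\to H^n(\C(Y),\mu_2^{\otimes n})$ is precisely the subgroup generated by $(f_1,\dots ,f_n)$. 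So $f^\ast(f_1,\dots ,f_n)=0$: it does not just become unramified, it becomes the zero class. You end up exhibiting the trivial element of $H^n_{nr}(\C(Y)/\C,\Z/2)$, which proves nothing.

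This is exactly the difficulty that the CTO-type construction (Definition~\ref{def:CTOtype}, Propositions~\ref{prop:CTO} and~\ref{prop:examplesofCTO-type}) is designed to circumvent, and it cannot be avoided with a single symbol built from the slots of the Pfister form itself. The paper uses \emph{two} symbols $\alpha_1=(a_1,\dots ,a_{n-1},b_1)$ and $\alpha_2=(a_1,\dots ,a_{n-1},b_2)$ whose difference $\alpha_1+\alpha_2=(a_1,\dots ,a_{n-1},b_1b_2)$ is the Pfister class of $\langle\langle a_1,\dots,a_{n-1},b_1b_2\rangle\rangle$, the form whose neighbour defines $Y_\eta$. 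By OVV one then has $f^\ast\alpha_1=f^\ast\alpha_2$, and this common class is nonzero because $\alpha_1$ is neither $0$ nor the Pfister class (this is where $\alpha_j\neq 0$ for $j=1,2$ enters). Unramifiedness is then proved valuation by valuation using the CTO condition $(\ast)$: for each $\nu\in\Val(K/\C)$, at least one of $\del_\nu\alpha_1,\del_\nu\alpha_2$ vanishes, and since the two pullbacks agree, the residue of $f^\ast\alpha_1$ vanishes for every valuation on $\C(Y)$. Your proposal collapses the two symbols into one, which is precisely the case the mechanism is built to exclude. The remaining steps of your plan (clearing denominators to get a flat projective model, the Pfister-neighbour inequalities pinning down $2^{n-1}-1\le r\le 2^n-2$, and unirationality via a quadratic base change) are in the right spirit and match the paper's Lemmas~\ref{lem:Pfister}, \ref{lem:unirational} and Corollary~\ref{cor:CTO-bundles:1}, but without replacing the single Pfister symbol by the two-symbol CTO device — together with an explicit arrangement of forms $h_i$, $g_{j\epsilon}$ making condition $(\ast)$ hold in all dimensions, which the paper verifies case by case in Proposition~\ref{prop:examplesofCTO-type} — the argument does not produce a nontrivial unramified class.
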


For a slightly more general result, see Theorem \ref{thm:H_nr:2} below.
As in \cite{asok}, 
the above result relies 
on Voevodsky's proof of the Milnor conjecture \cite{voevodsky}.   

\subsection{A specialization method without resolutions} \label{subsec:intro:specialize}
  
Voisin \cite{voisin} introduced and Colliot-Th\'el\`ene--Pirutka \cite{CT-Pirutka} developed further a specialization technique which led to numerous applications in the study of (stable) rationality properties of rationally connected varieties, see for instance \cite{AO,ABGP,beauville4,BB,CT-Pirutka2,HKT,HPT,HPT2,HPT3,HT,KO,okada,peyre2,totaro}.
Roughly speaking, in order to prove stable non-rationality of a projective variety $X$, one has to find a degeneration $Y$ of $X$ which admits both, some obstruction for stable rationality (e.g.\ nontrivial unramified cohomology) and a universally $\CH_0$-trivial resolution of singularities $\tau:\widetilde Y\longrightarrow Y$.
In order to check this last property in practice, one has to provide explicit local charts for the resolution $\widetilde Y$ and show that all scheme-theoretic fibres of $\tau$ have universally trivial Chow groups of zero-cycles. 
This is a quite subtle condition, whose verification was one of the main technical difficulties in several applications mentioned above, see for instance \cite{CT-Pirutka2,HPT,HPT2,KO}.
In particular, the method applies only to situations where $Y$ has very mild singularities and resolutions can be described explicitly.  
For instance, it had been impossible to apply the method to several (reasonable) special fibres $Y$, where obstructions for rationality were known, but the singularities did not seem to allow manageable resolutions.

The main idea of this paper is to replace the existence of a universally $\CH_0$-trivial resolution of $Y$ by a weaker condition, which is easier to check, see Proposition \ref{prop:degeneration} below. 
This leads to more general specialization theorems which also apply in situations where it seems impossible to compute a resolution of singularities explicitly, let alone to check that a universally $\CH_0$-trivial one exists. 
 
To state such a result, note that we define in this paper CTO type quadrics over rational function fields and produce examples in arbitrary dimensions, see Definition \ref{def:CTOtype} and Proposition \ref{prop:examplesofCTO-type}. 
These quadrics appear as generic fibres in the examples of Theorem \ref{thm:H_nr}. 
We then prove the following specialization theorem; for what it exactly means that a variety degenerates or specializes to another variety, see Section \ref{subsec:conventions:specialize} below.

\begin{theorem} \label{thm:CTO} 
Let $X$ be a projective variety which specializes to a complex projective variety $Y$ with a morphism $f:Y\longrightarrow S$ to a rational complex $n$-fold $S$ with $n\geq 2$.
If the generic fibre $Y_{\eta}$ of $f$ is smooth and stably birational to a CTO type quadric $Q$ over $\C(S)$, then  
$X$ is not stably rational.
\end{theorem}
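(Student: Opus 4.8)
The plan is to deduce Theorem \ref{thm:CTO} from the specialization result Proposition \ref{prop:degeneration}, which replaces the hypothesis ``$Y$ admits a universally $\CH_0$-trivial resolution'' of the method of Voisin \cite{voisin} and Colliot-Th\'el\`ene--Pirutka \cite{CT-Pirutka} by the weaker requirement that the morphism $f\colon Y\to S$ carry a nonzero unramified cohomology class on $\C(Y)$ which restricts trivially to the very general fibre of $f$. The point is that then the (possibly very bad) singularities of $Y$, being concentrated in vertical fibres over $S$, cannot absorb such a class. So it suffices to manufacture such a class on $Y$ out of the CTO type quadric $Q$.

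First I would transport the obstruction from $Q$ to $Y$ and check that it has the required fibrewise behaviour. Put $K=\C(S)$, so that $\C(Y)=K(Y_\eta)$; since $S$ is rational, $K$ is purely transcendental over $\C$. Because $Y_\eta$ is stably birational to $Q$ over $K$, the fields $\C(Y)$ and $\C(Q)$ become isomorphic after adjoining finitely many indeterminates, i.e.\ are stably isomorphic over $\C$, and since $H^n_{nr}(-/\C,\Z/2)$ is a purely transcendental invariant, $H^n_{nr}(\C(Y)/\C,\Z/2)\cong H^n_{nr}(\C(Q)/\C,\Z/2)$. By the definition of a CTO type quadric (Definition \ref{def:CTOtype}), $Q$ carries a symbol $\alpha\in H^n(K,\Z/2)$, pulled back from the base, whose image in $H^n(\C(Q),\Z/2)$ is nonzero and unramified over $\C$; transporting it, I obtain a nonzero class $\alpha\in H^n_{nr}(\C(Y)/\C,\Z/2)$ still represented by the symbol pulled back from $K$. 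Now the entries of $\alpha$ are finitely many elements of $K^{*}=\C(S)^{*}$; over the dense open $U\subseteq S$ complementary to their zero and polar loci, $\alpha$ extends to an unramified class on $f^{-1}(U)$ whose value at the generic point of a fibre $Y_{s}$, $s\in U(\C)$, is a symbol in constants, hence trivial (it lies in the image of $H^{n}(\C,\Z/2)=0$). So $\alpha$ pulls back trivially to a smooth model of the very general fibre of $f$, and feeding the specialization of $X$ to $Y$, the morphism $f\colon Y\to S$ (with $S$ a rational $n$-fold, $n\ge 2$, and $Y_\eta$ smooth), and the class $\alpha$ into Proposition \ref{prop:degeneration} yields that $X$ is not stably rational.

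The hard work does not sit in this bookkeeping but in Proposition \ref{prop:degeneration} itself, and that is the step I expect to be the main obstacle. One must show that a stably rational --- equivalently, universally $\CH_0$-trivial --- variety $X$ forces $Y$ to be universally $\CH_0$-trivial, and that this is incompatible with the class $\alpha$ \emph{without} resolving $Y$: the error cycles in the induced decomposition of the diagonal of $Y$ are supported over the singular locus of $Y$, which maps into a proper closed subset of $S$; a class pulled back from $\C(S)$ that dies on the general fibre pairs trivially with such vertical cycles, yet it is forced to pair nontrivially with the generic point of $Y$ once the diagonal decomposes, whence $\alpha=0$, a contradiction. Making this pairing argument robust against non-reduced and arbitrarily singular fibres --- using only that $Y_\eta$ is smooth and that the bad loci of $Y$ and of $\alpha$ are vertical over $S$ --- is the crux; granting Proposition \ref{prop:degeneration}, the argument above is the entire proof.
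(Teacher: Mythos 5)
Your high-level plan is the right one (and essentially matches the paper's: build an unramified class on $\C(Y)$ from the CTO quadric and feed a degeneration argument), but the vanishing you verify is the wrong one, and that gap is exactly where the paper does its real work.

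You check that the class $\alpha'=f^\ast\alpha_1$ restricts trivially to the \emph{very general fibre} of $f$: over the open $U\subset S$ where the entries of the symbol are regular units, the restriction to $Y_s$, $s\in U(\C)$, is a symbol in constants, hence zero. This is true, but it is not what Proposition \ref{prop:degeneration} asks for. Condition (2) there requires the unramified class to restrict to zero on each irreducible component $E_i$ of $\widetilde Y\setminus\tau^{-1}(U)$ for a resolution $\tau:\widetilde Y\to Y$ and a suitable open $U\subset Y$. In the proof of Theorem \ref{thm:CTO} one takes $U=Y^{\sm}$, so the $E_i$ are divisors in $\widetilde Y$ lying over the singular locus $Y^{\text{sing}}$. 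Because $Y_\eta$ is smooth, $Y^{\text{sing}}$ maps into a \emph{proper closed subset} of $S$; in particular, the images of the $E_i$ in $S$ are precisely the places where the entries of the symbol may vanish or have poles, i.e.\ the ramification locus of $\alpha_1$. Your ``symbol in constants'' argument gives no information there.

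This is the content of Proposition \ref{prop:alpha':CTO}, which you gloss over, and it is where the two-class structure of a CTO type quadric is actually used. The point is: for any prime divisor $E\subset Y$ not dominating $S$, the generic point of $E$ sits over a codimension-one point $x$ of some normal model $S'$; by Definition \ref{def:CTOtype}, at least one of $\alpha_1,\alpha_2$ has trivial residue at $\mathcal O_{S',x}$, hence extends to a class in $H^n_{\text{\'et}}(\Spec\mathcal O_{S',x},\mu_2^{\otimes n})$; by Theorem \ref{thm:OVV} the pullbacks $f^\ast\alpha_1$ and $f^\ast\alpha_2$ agree, so one may use whichever $\alpha_j$ extends; and then the restriction of $\alpha'$ to $E$ factors through $H^n(\kappa(x),\mu_2^{\otimes n})$, which vanishes by Theorem \ref{thm:milne} since $\kappa(x)$ has transcendence degree $n-1$. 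That is the chain your proof is missing. Without it, your argument only rules out contributions from cycles over the generic part of $S$, and says nothing about the cycles $C_i$ in Proposition \ref{prop:degeneration} supported over $Y^{\text{sing}}$ --- which are exactly the cycles the method is designed to handle.
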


Remarkably, the only condition on $Y$ that we have to impose in the above theorem concerns the generic fibre of $f:Y\longrightarrow S$.
For instance, $f$ does not need to be flat and $Y$ does not need to have a universally $\CH_0$-trivial resolution.
In fact, there is no assumption whatsoever on the singularities of $Y$ at points which do not dominate $S$. 

This significantly extends the number of possible applications. 
The main point is that for any smooth quadric $Q$ over $\C(\CP^n)$, and for any rational $n$-fold $S$, there is a wide range of different models $f:Y\longrightarrow S$ with $Q$ as generic fibre.
If $Q$ is of CTO type, then the above theorem applies and so any variety which specializes to $Y$ is not stably rational.

We emphasize that in general one must be quite careful when trying to deduce non-rationality for $X$ from non-rationality properties of some specialization $Y$ of $X$. 
For instance, a smooth cubic surface is rational and degenerates to a cone over an elliptic curve, which is non-rational. 
In fact, the situation is worse: $\CP^N$ specializes to the cone over any hypersurface $Z\subset \CP^N$ (cf.\ \cite[\S 4]{totaro}) and hence to a projective variety $Y$, stably birational to any given projective variety of dimension $N-1$.
For instance, if $N\geq 4$, we may choose a specialization $Y$ of $\CP^N_\C$ with a rational map $f:Y\dashrightarrow \CP^n_\C$ whose generic fibre is stably birational to a CTO type quadric.  
In Theorem \ref{thm:CTO}, such degenerations are excluded by the assumption that $f$ is a morphism with smooth generic fibre.
It is however possible to weaken those assumptions so that $f:Y\dashrightarrow S$ is only a dominant rational map, but $Y$ must have sufficiently mild singularities locally along the closure of a general fibre of $f$, see Theorem \ref{thm:CTO:2} for the precise statement.

\begin{remark}
The quadric surfaces over $\C(\CP^2)$, recently constructed by Pirutka \cite{Pirutka} and Hassett, Pirutka and Tschinkel \cite{HPT}, are not of CTO type. 
Nonetheless, our specialization method without resolutions works also for those quadrics.  
This simplifies \cite{HPT,HPT2,HPT3}, but it also yields much more general results which seemed inaccessible before. 
The details appeared elsewhere \cite{Sch2}. 
\end{remark}

\section{Preliminaries} 
\subsection{Conventions and notations} 
All schemes are separated.
A variety is an integral scheme of finite type over a field. 
Two varieties $X$ and $Y$ over a field $k$ are stably birational, if $X\times \CP^m_k$ is birational (over $k$) to $Y\times \CP^n_k$ for some $n,m\geq 0$.
A resolution of a variety $Y$ is a proper birational morphism of varieties $\tau:\widetilde Y\longrightarrow Y$, with $\widetilde Y$ smooth.
If $Z\subset Y$ is a closed subscheme of a variety $Y$, then a log resolution of the pair $(Y,Z)$ is a resolution of singularities $\tau:\widetilde Y\longrightarrow Y$ such that the reduced subscheme which underlies $\tau^{-1} Z$ is a simple normal crossing divisor. 
A property is said to hold for a very general point of a scheme if it holds at all closed points outside some countable union of proper closed subsets. 

\subsection{What it means that a variety specializes or degenerates to another one}\label{subsec:conventions:specialize}
We say that a variety $X$ over a field $L$ specializes (or degenerates) to a variety $Y$ over a field $k$, with $k$ algebraically closed, if there is a discrete valuation ring $R$ with residue field $k$ and fraction field $F$ with an injection $F\hookrightarrow L$ of fields, together with a flat proper morphism $\mathcal X\longrightarrow \Spec R$ of finite type, such that $Y$ is isomorphic to the special fibre $Y\cong \mathcal X\times k$ and  $X\cong \mathcal X\times L$ is isomorphic to a base change of the generic fibre $\mathcal X\times F$.

The next lemma shows that this terminology allows quite some flexibility.

\begin{lemma}\label{lem:specialize}
Let $\pi:\mathcal X\longrightarrow B$ be a flat proper morphism of complex varieties with integral fibres, and let $0\in B$ be a closed point.
Then for any very general point $t\in B$, the fibre $X_t$ specializes to $X_0$.
\end{lemma}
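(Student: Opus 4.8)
The plan is to reduce the statement to the definition of specialization given in Section \ref{subsec:conventions:specialize} by constructing, for a suitable very general $t\in B$, a discrete valuation ring $R$ with residue field $\C$ whose fraction field $F$ admits an embedding into $\C$ sending the generic point of $\Spec R$ to $t$. The natural candidate for $R$ is the local ring of $B$ at the point $0$, or rather its completion or a suitable localization, suitably cut down to dimension one. Concretely, I would first replace $B$ by an affine open neighborhood of $0$ and, using that $B$ is a complex variety, choose a curve $C\subset B$ through $0$ that is not contained in any of the countably many ``bad'' divisors defining the very general locus; normalizing $C$ and localizing at a point over $0$ produces a one-dimensional local domain, whose normalization $R$ is a DVR with residue field $\C$ (the residue field is $\C$ because $\C$ is algebraically closed and $C$ is a $\C$-variety, so the closed point is $\C$-rational after possibly shrinking). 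Pulling back $\pi$ along $\Spec R\to B$ gives a flat proper morphism $\mathcal X_R\to\Spec R$ whose special fibre is $X_0$ (flatness of the base change is automatic; properness is preserved), exactly as required in the definition.

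The remaining point is the embedding $F\hookrightarrow \C$ and the identification of the generic fibre's base change with $X_t$. Here I would use that $F$, the fraction field of $R$, is a finitely generated extension of $\C$ of transcendence degree $\dim C=1$ (or $0$ if $C$ is a point, a case one must handle separately and which only occurs if $0$ is not a very general point to begin with — but then the statement is about very general $t$, so one may assume $\dim B\geq 1$; if $\dim B=0$ the statement is trivial). Any such field embeds into $\C$ by choosing a transcendence basis and extending along the algebraic closure, using that $\C$ has infinite transcendence degree over $\Q$ and is algebraically closed. Under such an embedding $F\hookrightarrow\C$, the closed point of $\Spec C$ that it picks out corresponds to a closed point $t\in C\subset B$, and the fibre $\mathcal X_R\times_F\C$ is canonically $X_t$. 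One then needs that this $t$ can be taken very general in $B$: this is where the freedom in choosing the embedding of $F$ enters — the closed points of $B$ arising this way from embeddings of the fraction fields of all such curves $C$ through $0$ sweep out a subset which meets every dense open of $B$, so in particular very general points of $B$ are reached.

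The main obstacle, and the step requiring the most care, is matching the \emph{very general} quantifier in the statement with the \emph{existential} quantifier hidden in the definition of specialization: the definition only requires \emph{some} DVR and \emph{some} embedding realizing $X_t$ as a base change of the generic fibre, but Lemma \ref{lem:specialize} asserts this for \emph{every} very general $t$. I would handle this by arguing in the reverse direction: fix an arbitrary very general $t\in B$, i.e. $t$ lies in a countable intersection $\bigcap_i U_i$ of dense opens; choose an irreducible curve $C\subset B$ joining $0$ and $t$ with $C\not\subset B\setminus U_i$ for all $i$ (possible after shrinking to an affine and using that a very general point lies on such a curve — one may cut $B$ down by general hyperplane sections through $0$ and $t$ to reduce to the curve case), and run the construction above with this specific $C$. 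The generic point of the normalization of $C$ then has residue field a finitely generated extension of $\C$, and the key subtlety is to produce an embedding of this field into $\C$ whose associated point of $C$ is precisely $t$ and not merely some very general point — this follows because $t$ is a $\C$-point of $C$, hence the valuation at $t$ already gives a $\C$-algebra homomorphism from the local ring, and passing to fraction fields after a final localization yields the desired $F\hookrightarrow \C$. Once this is arranged, flat base change, preservation of properness, and the identification of special and generic fibres are all routine.
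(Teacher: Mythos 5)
Your overall strategy — reduce to a curve through $0$, normalize, use the DVR at the point over $0$, and pull back the family — matches the paper's skeleton, and your handling of the reduction to dimension one is fine. But the final and crucial step contains a genuine gap, and it is exactly the step the paper handles with a different idea.

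You write that because $t$ is a $\C$-point of $C$, ``the valuation at $t$ already gives a $\C$-algebra homomorphism from the local ring, and passing to fraction fields after a final localization yields the desired $F\hookrightarrow \C$.'' This does not work. Evaluation at $t$ is the reduction map $\mathcal O_{C,t}\twoheadrightarrow \C$ modulo the maximal ideal; it has a nontrivial kernel, so it cannot be extended to an \emph{injection} $\C(C)\hookrightarrow \C$, let alone one whose associated base change of the generic fibre recovers $X_t$. In fact no $\C$-algebra homomorphism $\C(C)\to\C$ exists at all, since $\C(C)$ has transcendence degree one over $\C$. The embeddings $\C(C)\hookrightarrow\C$ that one can construct necessarily move $\C$ (fixing only some countable subfield), and there is no a priori reason such an embedding produces precisely your chosen $t$. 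So the attempt to prove the statement ``for every very general $t$'' directly, holding $t$ fixed while producing the required field embedding, is not justified.

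The paper avoids this by turning the quantifier around. It first observes that $\pi$ descends to a family $\mathcal X'\to B'$ over a \emph{countable} algebraically closed subfield $k\subset\C$, and then, citing Vial's lemma, that for every very general $t$ there is an isomorphism $\varphi:\overline{\C(B)}\xrightarrow{\sim}\C$ identifying the geometric generic fibre with $X_t$; consequently all very general fibres are abstractly isomorphic. This reduces the problem to exhibiting \emph{one} very general $t$ for which $X_t$ specializes to $X_0$, at which point the curve/normalization/DVR argument you describe closes the proof — because the generic fibre over the curve, base changed along \emph{any} embedding of its function field into $\C$, yields a very general fibre, and one only needs one. To repair your proof you would either have to import this ``all very general fibres are isomorphic'' step (and with it the descent to a countable field), or give a genuinely different construction of the field embedding — the evaluation-at-$t$ shortcut is not available.
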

\begin{proof}
The family $\pi$ is obtained as base change of some 
family $\pi':\mathcal X'\longrightarrow B'$ defined over some countable algebraically closed subfield $k\subset \C$. 
Let  $U\subset B(\C)$ be the union of all closed points $b\in B$, which do not lie on $Z'\times_k\C$ for some proper subvariety $Z'\subsetneq B'$.  
Since there are only countably many such subvarieties $Z'$, any very general point of $B$ lies in $U$.
Moreover, for any $t\in U$, there is a field isomorphism $\varphi:\overline {\C(B)}\stackrel{\sim}\longrightarrow \C$ which identifies the geometric generic fibre $\mathcal X\times \overline{\C(B)}$ with the very general fibre $X_t$, see for instance \cite[Lemma 2.1]{vial}. 
This shows that the fibres $X_t$ with $t\in U$ are all abstractly isomorphic (i.e.\ differ only by the action of  $\Aut(\C)$)  and so it suffices to find one $t\in U$ such that $X_t$ degenerates to $X_0$.
Hence, we may reduce to the case where $B$ is a curve.  
Taking normalizations, we may also assume that $B$ is smooth.
Using again that the geometric generic fibre of $\pi$ is abstractly isomorphic to any very general fibre, the statement is now clear because $\mathcal O_{B,0}$ is a discrete valuation ring if $B$ is a smooth curve. 
\end{proof}

\subsection{Chow groups of zero-cycles} \label{subsec:CH0}
A morphism $f:X\longrightarrow Y$ of varieties over a field $k$ is universally $\CH_0$-trivial,  if $f_\ast :\CH_0(X\times L)\stackrel{\cong}\longrightarrow \CH_0(Y\times L)$ is an isomorphism for all field extensions $L$ of $k$.
If the structure morphism $f:X\longrightarrow \Spec k$ is universally $\CH_0$-trivial, then we say that the Chow group of zero-cycles of $X$ is universally trivial.
If $X$ is smooth and proper, this is equivalent to the existence of an integral decomposition of the diagonal $\Delta_X\in \CH_{\dim(X)}(X\times X)$ as in (\ref{eq:DeltaX}) below. 
The Chow group of zero-cycles of a smooth projective variety $X$ over a field is a stable birational invariant, see \cite[Lemme 1.5]{CT-Pirutka} and \cite[Theorem 1.1]{totaro} and references therein.

\subsection{Galois cohomology of fields}
Let $K$ be a field of characteristic coprime to $l$.
We identify the Galois cohomology group $H^n(K,\mu_l^{\otimes n})$  with the \'etale cohomology group $H^n_{\text{\'et}}(\Spec(K),\mu_l^{\otimes n})$, where $\mu_l\subset \mathbb G_m$ denotes the group of $l$-th roots of unity. 
We also use the identification $H^1(K,\mu_l)\cong K^\ast\slash(K^\ast)^l$, induced by the Kummer sequence.
For $a_1,\dots ,a_n\in K^\ast$, we denote by $(a_1,\dots ,a_n)\in H^n(K,\mu_l^{\otimes n})$ the class obtained by cup product. 
Classes of this form are called symbols.

If $A$ is a discrete valuation ring with fraction field $K$ and residue field $\kappa$ whose characteristic is coprime to $l$, then there are residue maps $\del^n_A:H^n(K,\mu_l^{\otimes n})\longrightarrow H^{n-1}(\kappa,\mu_l^{\otimes (n-1)})$.
If $\nu$ denotes the corresponding valuation on $K$, we also write $\del^n_\nu=\del^n_A$. 

The following lemma computes the residue of a symbol explicitly in the case of $\mu_2$-coefficients, where squares can be ignored. 

\begin{lemma}\label{lem:residue}
Let $A$ be a discrete valuation ring with residue field $\kappa$ and fraction field $K$, both of characteristic different from $2$.  
Suppose that $-1$ is a square in $K$.
Let $\pi\in A$ be a uniformizer, $0\leq m\leq n$ be integers and let $a_1\dots ,a_{n}\in A^\ast$ be units in $A$.
Then the following identity holds in $H^{n-1}(\kappa, \mu_2^{\otimes (n-1)})$:
$$
\del^n_A(\pi a_1,\dots ,\pi a_m ,a_{m+1},\dots ,a_n)= \left( \sum_{i=1}^m (\overline a_1,\dots ,\widehat {\overline a_i},\dots ,\overline a_m) \right) \cup (\overline a_{m+1},\dots ,\overline a_n)  ,
$$
where $\overline a_i\in \kappa$ denotes the image of $a_i$ in $\kappa$ and $(\overline a_1,\dots ,\widehat {\overline a_i},\dots ,\overline a_m)$ denotes the symbol where $\overline a_i$ is omitted.
Here we use the convention that the above sum $\sum_{i=1}^m$ is one if $m=1$ and it is zero if $m=0$. 
\end{lemma}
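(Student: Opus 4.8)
The plan is to reduce the computation to two standard properties of the residue map $\partial^n_A$ together with the mod-$2$ bilinearity of the cup product. Recall that, for units $u_1,\dots,u_n\in A^\ast$, one has $\partial^n_A(u_1,\dots,u_n)=0$, while $\partial^n_A\big((\pi)\cup(u_2,\dots,u_n)\big)=(\overline u_2,\dots,\overline u_n)$ in $H^{n-1}(\kappa,\mu_2^{\otimes(n-1)})$; moreover $\partial^n_A$ is a group homomorphism. I will also use that in $H^\ast(K,\mu_2^{\otimes\ast})$ the cup product is bilinear over $H^1(K,\mu_2)\cong K^\ast/(K^\ast)^2$, is graded-commutative (hence, mod $2$, symmetric), and satisfies $(x)\cup(x)=(x)\cup(-1)$ for all $x\in K^\ast$.

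First I would expand the symbol using bilinearity in its first $m$ arguments. Since $(\pi a_i)=(\pi)+(a_i)$ in $H^1(K,\mu_2)$, this gives
\[
(\pi a_1,\dots,\pi a_m,a_{m+1},\dots,a_n)=\sum_{T\subseteq\{1,\dots,m\}}\sigma_T,
\]
where $\sigma_T$ denotes the symbol whose $i$-th entry is $(\pi)$ for $i\in T$, is $(a_i)$ for $i\in\{1,\dots,m\}\setminus T$, and whose last $n-m$ entries are $(a_{m+1}),\dots,(a_n)$. If $|T|\geq 2$, then, bringing two $\pi$-entries of $\sigma_T$ together by symmetry, $\sigma_T$ becomes a cup multiple of $(\pi)\cup(\pi)=(\pi)\cup(-1)$, which vanishes because $-1$ is a square in $K$; hence $\sigma_T=0$ for $|T|\geq 2$. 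The term $\sigma_\emptyset=(a_1,\dots,a_n)$ consists of units only, so $\partial^n_A\sigma_\emptyset=0$. For a singleton $T=\{j\}$, symmetry of the cup product lets me rewrite $\sigma_{\{j\}}=(\pi)\cup(a_1,\dots,\widehat{a_j},\dots,a_m,a_{m+1},\dots,a_n)$, and the residue formula yields $\partial^n_A\sigma_{\{j\}}=(\overline a_1,\dots,\widehat{\overline a_j},\dots,\overline a_m,\overline a_{m+1},\dots,\overline a_n)$.

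Finally I would sum these residues over $j=1,\dots,m$ and factor out the common cup-factor $(\overline a_{m+1},\dots,\overline a_n)$, which produces exactly the asserted identity; the degenerate cases $m=0$ (empty sum of symbols, giving $0$) and $m=1$ (a single term whose ``omitted'' symbol is the unit $1\in H^0(\kappa,\mu_2^{\otimes 0})$) match the stated conventions. I do not expect an essential obstacle here: the only points needing care are that all signs disappear modulo $2$, that the hypothesis on $-1$ is invoked purely inside $H^n(K,\mu_2^{\otimes n})$ \emph{before} applying $\partial^n_A$ (it is this step where the terms with $|T|\ge 2$ get killed), and the bookkeeping of the omitted entries so that the combinatorial expansion lines up with the right-hand side.
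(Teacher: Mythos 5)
Your argument is correct and is essentially the same as the paper's: both expand $(\pi a_1,\dots,\pi a_m,a_{m+1},\dots,a_n)$ multilinearly, kill every term containing $(\pi)\cup(\pi)$ by invoking the Steinberg identity $(x,x)=(x,-1)$ together with the hypothesis that $-1$ is a square, and then apply the residue map to the surviving terms using the base cases $m=0,1$. The paper simply states the resulting identity of symbols and cites \cite[Proposition 1.3]{CTO} for the base cases, whereas you spell out the subset-indexed expansion explicitly, but the underlying mechanism is identical.
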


\begin{proof}
The cases $m=0,1$ follow for instance from \cite[Proposition 1.3]{CTO}.
In order to prove the lemma, it thus suffices to show the following: 
$$
(\pi a_1,\dots ,\pi a_m ,a_{m+1},\dots ,a_n)=\left( \sum_{i=0}^m (a_1,\dots ,a_{i-1},\pi,a_{i+1},\dots ,a_m) \right) \cup (a_{m+1},\dots ,a_n) ,
$$ 
where the summand for $i=0$ is understood to be $(a_1,\dots ,a_m)$.
To prove this identity, recall that the Steinberg relations $(a,1-a)=0$ for $a\in K\setminus\{0,1\}$ imply $(a,-a)=0$ for all $a\in K^\ast$, see for instance \cite[Lemma 2.2]{kerz}. 
Since $-1$ is a square in $K$, $(\pi,\pi)=0$.
Using this, the formula follows immediately.  
\end{proof}

We will use the following compatibility of residues, see \cite[p.\ 143]{CTO}.

\begin{lemma}\label{lem:residue:compatible}
Let $f:\Spec B\longrightarrow \Spec A$ be a surjective morphism of schemes, where $A$ and $B$ are discrete valuation rings with fraction fields $K=\Frac A$ and $L=\Frac B$ and residue fields $\kappa_A$ and $\kappa_B$ of characteristic different from $2$, respectively.
Then there is a commutative diagram
\begin{align*} 
\xymatrix{
H^n(L,\mu_2^{\otimes n}) \ar[r]^-{\del_B^n} & H^{n-1}(\kappa_B,\mu_2^{\otimes (n-1)}) \\
H^n(K,\mu_2^{\otimes n}) \ar[r]^-{\del_{A}^n}  \ar[u]^{f^\ast}&H^{n-1}(\kappa_A,\mu_2^{\otimes (n-1)})  \ar[u]_{e\cdot f^\ast} ,
}
\end{align*}
where $e=\nu_B(\pi_A)\in \Z$ is the valuation with respect to $B$ of a uniformizer $\pi_A$ of $A$.
\end{lemma}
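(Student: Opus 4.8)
The plan is to reduce the statement to the case of symbols, where Lemma \ref{lem:residue} applies directly, and then glue the two cases together using Voevodsky's theorem that every class in $H^n(K,\mu_2^{\otimes n})$ is a sum of symbols \cite{voevodsky}. So the first step is to verify the diagram on a single symbol $(b_1,\dots,b_n)\in H^n(K,\mu_2^{\otimes n})$ with $b_i\in K^\ast$. Writing $b_i=\pi_A^{\nu_A(b_i)}u_i$ with $u_i\in A^\ast$, and using $\mu_2$-coefficients so that only the parity of $\nu_A(b_i)$ matters, we are in the situation of Lemma \ref{lem:residue} (after possibly reordering, and noting that $-1$ is a square because $\mathrm{char}\,\kappa_A\neq 2$ is not quite enough — but the residue formula of \cite[Proposition 1.3]{CTO} holds in general, so I would invoke that rather than insist on $-1$ being a square). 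Pulling $(b_1,\dots,b_n)$ back to $L$ via $f^\ast$ gives $(b_1,\dots,b_n)\in H^n(L,\mu_2^{\otimes n})$, and now $\nu_B(b_i)=e\cdot\nu_A(b_i)$ because $\nu_B(\pi_A)=e$ and the $u_i$ are $B$-units.

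The second step is the explicit comparison of $\del_B^n\circ f^\ast$ and $(e\cdot f^\ast)\circ\del_A^n$ on this symbol. On the $A$-side, $\del_A^n(b_1,\dots,b_n)$ is (up to sign conventions) a symbol in the residues $\overline u_i\in\kappa_A^\ast$, weighted by the parities $\nu_A(b_i)\bmod 2$; applying $e\cdot f^\ast$ multiplies by $e$ and sends $\overline u_i$ to its image in $\kappa_B$. On the $B$-side, $\del_B^n(b_1,\dots,b_n)$ involves the parities $\nu_B(b_i)\bmod 2=e\cdot\nu_A(b_i)\bmod 2$ and the same residues $\overline u_i$ now viewed in $\kappa_B$. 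When $e$ is even, $\nu_B(b_i)$ is even for all $i$, so $\del_B^n$ vanishes on the symbol, matching $e\cdot f^\ast(\cdots)=0$ in $\mu_2$-cohomology. When $e$ is odd, $\nu_B(b_i)\equiv\nu_A(b_i)\bmod 2$, so the two residue expressions agree term by term, again matching $e\cdot f^\ast$. Either way the square commutes on symbols.

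The third step is to upgrade from symbols to arbitrary classes. Both $\del_B^n\circ f^\ast$ and $(e\cdot f^\ast)\circ\del_A^n$ are homomorphisms of abelian groups, and by the Milnor conjecture \cite{voevodsky} the group $H^n(K,\mu_2^{\otimes n})$ is generated by symbols; since the two homomorphisms agree on a generating set, they agree everywhere. This is exactly the argument sketched in the remark preceding the statement.

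I do not expect a serious obstacle here: the content is genuinely \cite[p.\ 143]{CTO} plus the reduction to symbols, and the only care needed is bookkeeping with sign/ordering conventions in the residue map and making sure the factor $e$ is tracked correctly (in particular that its parity, not its value, is what survives in $\mu_2$-cohomology). The mildly delicate point is justifying the residue formula for a symbol when $-1$ need not be a square in $K$ — here I would simply cite \cite[Proposition 1.3]{CTO} directly rather than route through Lemma \ref{lem:residue}, since that proposition does not require the hypothesis on $-1$.
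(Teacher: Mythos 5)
Your approach is exactly the one sketched in the paper (the remark preceding the lemma): reduce to symbols via Voevodsky's theorem, verify the diagram on a single symbol, and conclude by additivity. So the overall route is the same as the paper's. The paper itself gives no detailed argument, only citing \cite[p.\ 143]{CTO} together with this remark, so your attempt is a reasonable fleshing-out.

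Two points in your Step 2 are, however, too casual and would need fixing. First, when you say the $B$-side residue involves ``the same residues $\overline u_i$ now viewed in $\kappa_B$,'' that is not literally true: writing $\pi_A=\pi_B^{e}v$ with $v\in B^\ast$, the element $b_i=\pi_A^{m_i}u_i$ has $B$-unit part $v^{m_i}u_i$, so the residues picked up in $\kappa_B$ are $\overline v^{\,m_i}\overline u_i$, not $\overline u_i$. These discrepancies ultimately cancel, but that is a computation and not a ``term by term'' match. Second, \cite[Proposition 1.3]{CTO} only covers the case of at most one entry divisible by $\pi_A$; for symbols with $m\geq 2$ such entries the residue formula picks up $(-1)$-terms from $(\pi,\pi)=(\pi,-1)$, which you would then need to track through the pullback. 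Both issues are resolved simultaneously by the standard reduction: using bilinearity and $(\pi,\pi)=(\pi,-1)$, rewrite any symbol as $(\pi_A,w_2,\dots,w_n)$ plus a symbol of $A$-units (with suitable $w_i\in A^\ast$, possibly involving $-1$). For the unit symbol both residues vanish; for the term $(\pi_A,w_2,\dots,w_n)$ one has $\del_A^n=(\overline{w_2},\dots,\overline{w_n})$, while over $B$ one uses $\pi_A\equiv\pi_B^e v$ and sees directly that $\del_B^n$ is $0$ if $e$ is even and $(\overline{w_2},\dots,\overline{w_n})$ in $\kappa_B$ if $e$ is odd, i.e.\ $e\cdot f^\ast(\del_A^n)$. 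With this reduction your argument becomes airtight and matches the paper's intent.
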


Finally, we will use the following basic vanishing result, see \cite[II.4.2]{serre}. 

\begin{theorem} \label{thm:milne}
Let $K$ be the function field of an $n$-dimensional variety over an algebraically closed field of characteristic different from $2$.
Then, $H^i(K,\mu_2^{\otimes i})=0$ for all $i>n$.
\end{theorem}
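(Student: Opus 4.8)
The plan is to prove the stronger statement that the cohomological dimension at the prime $2$ satisfies $\operatorname{cd}_2(K)\le n$; since $\mu_2^{\otimes i}$ is a $2$-torsion $\Gal(\overline K/K)$-module, the asserted vanishing $H^i(K,\mu_2^{\otimes i})=0$ for $i>n$ is then immediate. I would argue by induction on $n=\operatorname{trdeg}(K/k)$. For $n=0$ the field $K=k$ is algebraically closed, its absolute Galois group is trivial, and $\operatorname{cd}_2(K)=0$.

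For the inductive step, fix a transcendence basis $t_1,\dots,t_n$ of $K/k$ and put $F=k(t_1,\dots,t_{n-1})$, the function field of $\PP^{n-1}$, so that $K$ is a finite extension of $F(t_n)$. Since $\overline K$ is also an algebraic closure of $F(t_n)$ and $[K:F(t_n)]$ is finite, $\Gal(\overline K/K)$ is a closed subgroup of finite index in $\Gal(\overline{F(t_n)}/F(t_n))$; as passing to a closed subgroup does not increase cohomological dimension, $\operatorname{cd}_2(K)\le\operatorname{cd}_2(F(t_n))$. Hence it suffices to establish the general inequality $\operatorname{cd}_2(L(t))\le\operatorname{cd}_2(L)+1$ for an arbitrary field $L$ of characteristic different from $2$: applied to $L=F$, the inductive hypothesis $\operatorname{cd}_2(F)\le n-1$ then gives $\operatorname{cd}_2(F(t_n))\le n$, as wanted.

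To prove $\operatorname{cd}_2(L(t))\le\operatorname{cd}_2(L)+1$ I would note that $\overline L$ is relatively algebraically closed in $L(t)$, so $\overline L(t)/L(t)$ is Galois with group $\Gal(\overline L/L)$, giving a short exact sequence of profinite groups
$$1 \to \Gal(\overline{L(t)}/\overline L(t)) \to \Gal(\overline{L(t)}/L(t)) \to \Gal(\overline L/L) \to 1 .$$
Here the kernel is the absolute Galois group of $\overline L(t)$, the function field of $\PP^1$ over the algebraically closed field $\overline L$; by Tsen's theorem $\overline L(t)$ is a $C_1$ field, whence $\operatorname{cd}_2(\overline L(t))\le 1$. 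Combining this with the standard estimate $\operatorname{cd}_2(G)\le\operatorname{cd}_2(N)+\operatorname{cd}_2(G/N)$ for a closed normal subgroup $N$ of $G$ — a formal consequence of the Hochschild--Serre spectral sequence — yields $\operatorname{cd}_2(L(t))\le 1+\operatorname{cd}_2(L)$.

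I expect the technical core to be the two classical inputs in this last step: Tsen's theorem (that the function field of a curve over an algebraically closed field is $C_1$, equivalently has vanishing $2$-torsion in the Brauer groups of all its finite extensions, equivalently $\operatorname{cd}_2\le 1$), and the behaviour of cohomological dimension of profinite groups under subgroups and extensions; both are standard (Serre, \emph{Galois Cohomology}, Ch.\ I--II). For the specific coefficients $\mu_2^{\otimes i}$ one can bypass Tsen in the purely transcendental step using the Faddeev exact sequence
$$0 \to H^i(F,\mu_2^{\otimes i}) \to H^i(F(t),\mu_2^{\otimes i}) \to \bigoplus_{x\in(\mathbb{A}^1_F)_{(1)}} H^{i-1}(\kappa(x),\mu_2^{\otimes(i-1)}) \to 0 ,$$
whose middle map is the sum of the residues $\del^i_{\nu_x}$: every residue field $\kappa(x)$ is a finite extension of $F$, hence the function field of an $(n-1)$-dimensional variety (the normalisation of $\PP^{n-1}$ in $\kappa(x)$), so the inductive hypothesis kills both outer groups once $i>n$. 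However the descent along the remaining finite extension $K/F(t_n)$ still forces one to argue with $\operatorname{cd}_2$ and the subgroup estimate, so the reformulation in terms of cohomological dimension cannot be avoided.
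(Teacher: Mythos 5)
The paper does not prove this statement; it simply cites Serre, \emph{Galois Cohomology}, II.4.2, where the result is stated (attributed to Tsen) and proved by exactly the argument you give: induction on transcendence degree, Tsen's theorem for the $\operatorname{cd}_2\le 1$ statement in the inductive step, the Hochschild--Serre inequality $\operatorname{cd}_p(G)\le\operatorname{cd}_p(N)+\operatorname{cd}_p(G/N)$, and non-increase of $\operatorname{cd}_p$ under passage to closed subgroups. Your proof is correct and is essentially the standard one; the only point worth making explicit is that one should choose a \emph{separating} transcendence basis (available since $k$ is perfect) so that $K/F(t_n)$ is separable and $G_K$ really sits inside $G_{F(t_n)}$ as a closed subgroup — or alternatively observe that a purely inseparable extension does not change the absolute Galois group. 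Your closing remark about the Faddeev sequence is also accurate: it gives the mod-$2$ vanishing over the purely transcendental field $F(t_n)$ directly, but one still needs the subgroup inequality for $\operatorname{cd}_2$ to descend to the finite extension $K$, so the cohomological-dimension reformulation is not genuinely avoidable.
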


\subsection{Rost cycle modules} \label{subsec:Rostcycle}
Let $k$ be a field.
For any finitely generated field extension $L$ of $k$, we denote by $ \Val(L/k)$ the set of all geometric discrete valuations of rank one on $L$ over $k$. 
Such valuations are characterized by the property that the corresponding valuation ring $\mathcal O_\nu\subset L$ is the local ring $\mathcal O_{X,x}$ at a codimension one point $x\in X^{(1)}$ of some normal variety $X$ over $k$ with $k(X)=L$, see \cite[Proposition 1.7]{merkurjev}.

A Rost cycle module  $M^\ast$  over $k$ is a functor from the category of finitely generated field extensions of $k$ to $\Z$-graded abelian groups with some additional properties, see \cite{rost} and \cite[Section 2]{merkurjev}. 
An important one for us is the existence of residue maps $\del_\nu^i:M^i(L)\longrightarrow M^{i-1}(E)$, for all  $\nu\in \Val(L\slash k)$, where $L\slash k$ is a finitely generated field extension and $E$ is the residue field of $\nu$.
The group of unramified elements is 
$$
M^i_{nr}(L):=\{\alpha \in M^i(L) \mid  \del^i_\nu\alpha=0\ \text{for all $\nu\in \Val(L/k)$} 
\}.
$$ 
A class $\alpha\in M^i_{nr}(L)$ is called nontrivial, if it is not in the image of $M^i(k)\longrightarrow M^i_{nr}(L)$.

If $X$ is a variety over $k$, then we write $M^i_{nr}(X):=M^i_{nr}(k(X))$.
If $X$ and $Y$ are smooth proper varieties over $k$, then for any cycle $\Gamma\in \CH_{\dim(X)}(X\times Y)$, there is a homomorphism
$$
\Gamma^\ast:M^i_{nr}(Y)\longrightarrow M^i_{nr}(X) ,
$$
which is trivial whenever $\Gamma$ does not dominate $X$, see \cite[RC-I and proof of RC.9]{karpenko-merkurjev}. 
Via these actions, unramified cohomology descends to a functor on the category of integral correspondences between smooth and proper $k$-varieties, see \cite[RC.3-4]{karpenko-merkurjev}.
If $\Gamma$ is the graph of a rational map $f:X\dashrightarrow Y$, we obtain pullback maps $\Gamma^\ast=f^\ast$. 

\subsection{Unramified cohomology} \label{subsec:unramified}
An important example of a Rost cycle module over a field $k$ is given by Galois cohomology $M^i(L)=H^i(L,\mu_l^{\otimes i})$, 
with $l$ coprime to $\operatorname{char}(k)$. 
The corresponding unramified cohomology groups are denoted by $H^i_{nr}(L,\mu_l^{\otimes i})$; if we want to emphasize the base field $k$, we also write $H^i_{nr}(L\slash k,\mu_l^{\otimes i})$ for this group.
If $k$ is algebraically closed and $i\geq 1$, then $H^i(k,\mu_l^{\otimes i})=0$  and so any $0\neq \alpha\in H^i_{nr}(L\slash k,\mu_l^{\otimes i})$ is a nontrivial unramified cohomology class in the sense of Section \ref{subsec:Rostcycle} above.
Originally, unramified cohomology has been defined by Colliot-Th\'el\`ene--Ojanguren \cite{CTO} as the subgroup of all elements $\alpha\in H^i(L,\mu_l^{\otimes i})$ that have trivial residue at all discrete valuations of rank one on $L$ over $k$ (and not only at the geometric ones).
It follows from \cite[Theorem 4.1.1]{CT} that the two definitions coincide if resolutions of singularities exist over $k$ (e.g.\ if $k=\C$).


If $X$ is a variety over $\C$,  $H_{nr}^i(\C(X)\slash \C,\mu_l^{\otimes i})$ is a stable birational invariant of $X$, see \cite[Proposition 1.2]{CTO}.
If additionally $X$ is smooth and projective, then $H^3_{nr}(\C(X)\slash \C,\mu_l^{\otimes 3})$ and $H^4_{nr}(\C(X)\slash \C,\mu_l^{\otimes 4})$ are related to failure of the integral Hodge conjecture for codimension two cycles on $X$ and to torsion in the third Griffiths group, annihilated by the Abel--Jacobi map, respectively,  see \cite{CT-voisin} and \cite{voisin-Hnr4}.

\section{Quadric bundles and quadrics over non-closed fields} \label{sec:quadrics}

\subsection{Quadratic forms and Pfister neighbours} \label{subsec:quadratic-forms}
Let $K$ be a field of characteristic different from $2$.
Any quadratic form $q$ on an $n$-dimensional $K$-vector space can be diagonalized, $q=\langle a_1,\dots ,a_n\rangle$ for some $a_i\in K$, and we call $n$ the dimension of $q$. 
We associate to $q$ the quadric hypersurface $Q:=\{q=0\}\subset \CP^{n-1}_K$, given by $\sum_ia_i z_i^2=0$.
Two quadratic forms are similar if and only if the corresponding quadric hypersurfaces are isomorphic.
The form $q$ is isotropic if and only if $Q$ admits a $K$-rational point.

The form $q$ is called an ($n$-fold) Pfister form, if it is isomorphic to the tensor product of forms of type $\langle 1,-a_i\rangle$ with nonzero $a_i\in K$, where $i=1,\dots ,n$.
We denote this tensor product by $\langle\langle a_1,\dots,a_n\rangle\rangle$; it is a form of dimension $2^n$.
The sign can be ignored if $-1$ is a square in $K$. 
A non-degenerate quadratic form $q_1$ is called a Pfister neighbour if it is similar to a subform of a Pfister form $q_2$ with $2\dim(q_1)>\dim(q_2)$.

\subsection{Birational geometry of quadrics}
Let $K$ be a field of characteristic different from $2$.
We say that two quadratic forms $q_1$ and $q_2$ over $K$ are stably birational, if the associated quadric hypersurfaces are stably birational over $K$.
The following lemma is well-known (cf.\ \cite[Proposition 2]{hoffmann}); for more results on the birational geometry of quadrics, we refer to \cite{totaro2} and references therein.

\begin{lemma} \label{lem:Pfister}
Let $q_2$ be a Pfister form over $K$. 
Then any Pfister neighbour $q_1$ of $q_2$ is stably birational to $q_2$. 
\end{lemma}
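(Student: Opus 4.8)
The plan is to reduce to a statement about generic points of quadrics and the behaviour of isotropy under field extension. First I would recall the key fact about Pfister forms: a Pfister form $q_2$ is \emph{round}, meaning that for any field extension $L/K$, the form $q_{2,L}$ is either anisotropic or hyperbolic, and moreover the set of nonzero values represented by $q_2$ over $L$ equals the group of similarity factors of $q_{2,L}$. In particular, $q_2$ becomes isotropic over $L$ if and only if it becomes hyperbolic over $L$. The second ingredient is the elementary but crucial observation that if $q_1$ is a Pfister neighbour of $q_2$, say $q_1$ is similar to a subform of $q_2$ with $2\dim(q_1) > \dim(q_2)$, then for any field extension $L/K$ the forms $q_{1,L}$ and $q_{2,L}$ are isotropic simultaneously: indeed, if $q_{2,L}$ is isotropic it is hyperbolic by roundness, hence its Witt index is $\tfrac12\dim(q_2) > \dim(q_2) - \dim(q_1)$, so any subform of dimension $\dim(q_1)$ is isotropic; conversely a subform being isotropic forces the ambient form to be isotropic.

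Next I would translate this into geometry. Let $Q_1, Q_2 \subset \PP_K$ denote the quadric hypersurfaces attached to $q_1, q_2$. Since similar quadratic forms define isomorphic quadrics (as recorded in Section~\ref{subsec:quadratic-forms}), we may assume $q_1$ is literally a subform of $q_2$, so that $Q_1 \subset Q_2$ is a linear section. The standard criterion (again from Section~\ref{subsec:quadratic-forms}) says a quadratic form is isotropic over a field $L$ precisely when the corresponding quadric has an $L$-point. Applying the simultaneous-isotropy statement above to the function fields $L = K(Q_1)$ and $L = K(Q_2)$ — noting that $q_{1,K(Q_1)}$ and $q_{2,K(Q_2)}$ are both isotropic by construction (a quadric always has a rational point over its own function field) — gives that $q_2$ is isotropic over $K(Q_1)$ and $q_1$ is isotropic over $K(Q_2)$. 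Equivalently, $Q_2$ has a $K(Q_1)$-point and $Q_1$ has a $K(Q_2)$-point.

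To conclude stable birationality I would invoke the classical fact that a smooth quadric $Q$ over a field $L$ having an $L$-rational point is $L$-rational (projection from the point), together with the Nishimura-type lemma that if a smooth proper $L$-variety $Z$ acquires a point over the function field of an integral $L$-variety $W$, then there is a rational map $W \dashrightarrow Z$ over $L$. From $Q_2 \times K(Q_1)$ having a rational point we get a dominant (the target quadric becomes rational so the map can be taken dominant after composing with the projection to projective space, or one argues directly) rational map $Q_1 \dashrightarrow Q_2$ over $K$, and symmetrically $Q_2 \dashrightarrow Q_1$; composing these with the relevant linear projections shows $\C(Q_1)$ and $\C(Q_2)$ become isomorphic after adjoining enough independent transcendentals, i.e.\ $Q_1$ and $Q_2$ are stably birational over $K$.

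The main obstacle is purely bookkeeping rather than conceptual: one must be careful that "isotropic subform of a hyperbolic form" genuinely uses the dimension inequality $2\dim(q_1) > \dim(q_2)$ and not just $\dim(q_1) \geq 1$, and one must handle the degenerate possibility that $q_{2}$ is already isotropic (hence hyperbolic) over $K$ itself, in which case both quadrics are rational over $K$ outright and stable birationality is immediate. Everything else is a routine assembly of the roundness of Pfister forms, the point/isotropy dictionary, and the rationality of quadrics with a rational point, all of which may be quoted from the literature cited around Lemma~\ref{lem:Pfister}.
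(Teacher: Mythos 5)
Your proposal is mathematically correct and follows essentially the same route as the paper: show that $Q_2$ has a $K(Q_1)$-point because $q_1$ is a subform, and that $Q_1$ has a $K(Q_2)$-point because $q_2$ becomes hyperbolic over $K(Q_2)$ and the Pfister-neighbour dimension inequality $2\dim q_1>\dim q_2$ forces the subform to be isotropic. The one place your writeup is looser than it should be is the final assembly: having dominant rational maps $Q_1\dashrightarrow Q_2$ and $Q_2\dashrightarrow Q_1$ does not by itself yield stable birationality, and the phrase ``composing these with the relevant linear projections'' glosses over the actual mechanism. The clean way to package it --- and what the paper does --- is to observe that the mutual-points statement says exactly that the generic fibre of each projection $\pr_i\colon Q_1\times Q_2\to Q_i$ is a smooth quadric with a rational point, hence rational; therefore $Q_1\times Q_2$ is birational to both $Q_1\times\PP^{\dim Q_2}$ and $Q_2\times\PP^{\dim Q_1}$, which is the definition of stable birationality. (Also, a small typo: the function fields should be $K(Q_i)$, not $\C(Q_i)$.)
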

\begin{proof} 
Let $Q_i$ be the quadric associated to $q_i$.
It suffices to prove that the generic fibre of $\pr_i:Q_1\times Q_2\longrightarrow Q_i$ is rational for $i=1,2$.  
Since $q_1$ is a subform of $q_2$, $Q_2$ has a $K(Q_1)$-rational point and so this is clear for $i=1$.
Conversely, $q_2$ is isotropic over $K(Q_2)$ and so $Q_1$ has a $K(Q_2)$-rational point, because  $2\dim(q_1)>\dim(q_2)$ and isotropic Pfister forms
are hyperbolic \cite[II.9.10]{EKM}. 
This proves the lemma.
\end{proof}

\begin{remark} \label{rem:4diml-quadrics}
By a result of Hoffmann \cite[Proposition 2]{hoffmann}, an anisotropic quadratic form $q_1$ over $K$ is stably birational to an anisotropic Pfister form $q_2$ if and only if $q_1$ is  a Pfister neighbour of $q_2$.
\end{remark}

The following unirationality criterion goes back to Colliot-Th\'el\`ene and Ojanguren. 

\begin{lemma} \label{lem:unirational}
Let $n\geq 2$, and let $K=\C(x_1,\dots ,x_n)$ be the function field of $\CP^n_\C$. 
Consider the quadratic form $q=\langle 1,a_1,a_2,\dots ,a_r\rangle$ over $K$ for some $a_i\in K^\ast$. 
Suppose that $a_1=f/g$ with $f,g\in \C[x_1,\dots ,x_n]$, 
satisfying one of the following:
\begin{enumerate}
\item $f$ and $g$ are of degree at most one;
\item $f$ and $g$ have degree at most two and the homogenization $q\in L[x_0,\dots ,x_n]$ of  $gz^2-f$, where $L=\C(z)$, is a quadratic form of rank $\geq 3$ over $L$. 
\end{enumerate} 
Then the quadric hypersurface $Q$ determined by $q$ is unirational over $\C$; more precisely, a degree two extension of $K(Q)$ is purely transcendental over $\C$. 
\end{lemma}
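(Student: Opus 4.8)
The plan is to prove that $K(Q)$ becomes purely transcendental over $\C$ after adjoining a single square root, namely $\sqrt{-a_1}$. Write $Q=\{z_0^2+a_1z_1^2+\dots+a_rz_r^2=0\}$ and set $K':=K(\sqrt{-a_1})$. One may assume $q$ is anisotropic over $K$, since otherwise $Q$ has a $K$-point, hence is $K$-rational, hence purely transcendental over $\C$ (as $K=\C(\CP^n)$ is); any degree two extension of a purely transcendental field is again purely transcendental, so in that case we are done. Granting anisotropy, $-a_1\notin(K^{\ast})^2$, so $K'/K$ has degree two; over $K'$ the binary subform $\langle 1,a_1\rangle$ becomes isotropic, hence hyperbolic, so the non-degenerate form $q\otimes K'$ of dimension $r+1\geq 3$ (if $r=1$ one already has $K(Q)=K'$) is isotropic, and $Q_{K'}$ is a smooth isotropic quadric of positive dimension over $K'$, therefore $K'$-rational. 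Since $Q$ is geometrically integral over $K$, base change identifies its function field with $K(Q)(\sqrt{-a_1})=K(Q)\otimes_KK'$, which we have just exhibited as a purely transcendental extension of $K'$. The problem thus reduces to showing that $K'$ is purely transcendental over $\C$, and $K(Q)(\sqrt{-a_1})$ will be the asserted degree two extension.

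Next I would rewrite $K'$ concretely. Since $-1$ is a square in $\C\subset K$ and $a_1=f/g$, one has $-a_1\equiv fg$ modulo squares in $K$, so $K'=K(\sqrt{fg})$ is the function field of $\{u^2=fg\}\subset\mathbb{A}^{n+1}_\C$, and — after the substitution $u=gz$ — also of $H:=\{g(x)z^2-f(x)=0\}\subset\mathbb{A}^{n+1}_\C$, in coordinates $x_1,\dots,x_n,z$. In case~(1), with $f,g$ linear, the projective closure of $\{u^2=fg\}$ in $\CP^{n+1}$ is the quadric of the quadratic form ``$u^2$ minus a product of two linear forms''. This form has rank $\geq 3$: were its rank $\leq 2$, the polynomial $fg$ would be a unit times a square and $-a_1$ a square in $K$, contradicting anisotropy. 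A quadric of rank $\geq 3$ over the algebraically closed field $\C$ is irreducible and rational, so $K'$ is purely transcendental over $\C$, settling case~(1).

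For case~(2), with $f,g$ of degree at most two, I would instead view $H$ as a quadric bundle over the $z$-line. Base-changing to $L:=\C(z)$, the variety $H$ becomes the affine quadric in $x_1,\dots,x_n$ cut out by $g(x)z^2-f(x)$, whose projective closure in $\CP^n_L$ is defined by the homogenization $q$ of $gz^2-f$ — by hypothesis a quadratic form of rank $\geq 3$ over $L$. Because $L=\C(z)$ is the function field of a curve over an algebraically closed field, Tsen's theorem makes it a $C_1$-field, so this rank $\geq 3$ form is isotropic over $L$; being irreducible of dimension $n-1\geq 1$ (here $n\geq 2$ enters), the corresponding quadric then carries a smooth $L$-rational point and is $L$-rational. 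Hence $K'=L(H)$ is purely transcendental over $L=\C(z)$, and therefore over $\C$, which completes case~(2) and the proof.

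I expect case~(2) to be the only real obstacle. In case~(1) the auxiliary field $K'=K(\sqrt{fg})$ is the function field of a quadric over the algebraically closed ground field $\C$ and is rational for free. In case~(2), however, $\{u^2=fg\}$ is a double cover of $\mathbb{A}^n_\C$ branched along a hypersurface of degree up to four — a quartic double solid when $n=3$, say — which is typically very far from rational, so one cannot argue over $\C$ directly. The trick is to re-fibre this double cover over the auxiliary affine line $\Spec\C[z]$ and transport the question to $\C(z)$, where Tsen's theorem applies; the precise role of the rank $\geq 3$ hypothesis is exactly to guarantee that Tsen yields a genuine rational point on an \emph{irreducible} fibre, and recognising that this is the condition to impose is the crux of the argument.
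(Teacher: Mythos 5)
Your argument follows the paper's proof almost exactly: pass to $K' = K(\sqrt{-a_1})=K(\sqrt{a_1})$, over which $Q$ becomes isotropic and hence rational, then show $K'\cong\C(\CP^n)$ by re-fibering the double cover $\{gz^2 = f\}$ over $L=\C(z)$ and applying Tsen's theorem to the resulting quadric (in case~(1) you instead observe directly that $K'$ is the function field of a rank-$\geq 3$ quadric over $\C$, a harmless variant). The only slip is the claim that \emph{any} degree-two extension of a purely transcendental field is again purely transcendental --- this is false (e.g.\ $\C(x)(\sqrt{x^3-x})$ is the function field of an elliptic curve) --- but what you actually need, that \emph{some} degree-two extension of $\C(t_1,\dots,t_m)$ is purely transcendental (for instance $\C(\sqrt{t_1},t_2,\dots,t_m)$), is true, and with this repair the reduction to the anisotropic case goes through.
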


\begin{proof}
The proof is similar to the arguments in \cite[Propositions 2.1 and 3.1]{CTO}.  
If $a_1$ is a square, then $Q$ is rational over $K$ and so the statement is clear.
Otherwise, $K':=K[z]\slash(z^2-a_1)$ is a field. 
Since $Q\times K'$ has a $K'$-rational point, it is rational over $K'$.
It thus suffices to see that $K'\cong \C(\CP^n)$. 
To this end, consider $L=\C(z)$ and let $Z\subset \CP^{n}_{L}$ be the projective closure of $\{gz^2-f=0\}$.
By construction, $K'=L(Z)$ and so it suffices to prove that $Z$ is rational over $L$.
This is clear if $f$ and $g$ are linear.
Otherwise, our assumptions imply that $Z$ is a cone over a smooth quadric $Z'$ over $L$ of dimension at least one.  
Since $L=\C(z)$ is a $C_1$-field, $Z'$ has a $L$-rational point and so $Z$ is rational.  
This concludes the lemma. 
\end{proof}

\subsection{A result of Orlov, Vishik and Voevodsky} 
Voevodsky's proof of the Milnor conjecture \cite{voevodsky} together with an exact sequence of Orlov, Vishik and Voevodsky \cite[Theorem 2.1]{OVV}, implies the following important result. 

\begin{theorem}[Orlov--Vishik--Voevodsky] \label{thm:OVV}
Let $K$ be a field of characteristic zero, and let $q$ be a Pfister neighbour of the Pfister form $\langle\langle a_1,\dots ,a_n \rangle\rangle$, with $a_i\in K^\ast$.
Let $f:Q\longrightarrow \Spec K$ be the projective quadric associated to $q$.
Then the kernel of
$$
f^\ast:H^n(K,\mu_2^{\otimes n})\longrightarrow H^n(K(Q),\mu_2^{\otimes n})
$$ 
is generated by $(a_1,\dots ,a_n)$.
\end{theorem}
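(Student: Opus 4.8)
\emph{Strategy.} The plan is to derive Theorem~\ref{thm:OVV} from two imported black boxes: the Orlov--Vishik--Voevodsky exact sequence \cite[Theorem~2.1]{OVV} for mod $2$ Milnor $K$-theory, and Voevodsky's norm residue isomorphism \cite{voevodsky} (the Milnor conjecture). The work consists of a reduction to the distinguished quadric that \cite{OVV} treats, and a transport of the exactness statement from Milnor $K$-theory to Galois cohomology.

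\emph{Step 1: reduction to one convenient quadric.} For any field $F$ of characteristic different from $2$ and any $m$, the restriction map $H^m(F,\mu_2^{\otimes m})\to H^m(F(t),\mu_2^{\otimes m})$ is injective (it is split by the specialization map attached to a place of $F(t)/F$ with residue field $F$ and trivial ramification). Hence $H^m(K(Q),\mu_2^{\otimes m})\hookrightarrow H^m(K(Q\times\CP^N),\mu_2^{\otimes m})$, and so the kernel of $f^\ast\colon H^n(K,\mu_2^{\otimes n})\to H^n(K(Q),\mu_2^{\otimes n})$ equals the kernel of $H^n(K,\mu_2^{\otimes n})\to H^n(K(Q\times\CP^N),\mu_2^{\otimes n})$; in particular it depends only on the stable birational class of $Q$ over $K$. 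By Lemma~\ref{lem:Pfister}, every quadric attached to a Pfister neighbour of $\langle\langle a_1,\dots,a_n\rangle\rangle$ is stably birational over $K$ to the Pfister quadric of $\langle\langle a_1,\dots,a_n\rangle\rangle$, so all such quadrics yield the same kernel; it therefore suffices to prove the statement for the particular Pfister neighbour quadric considered in \cite[Theorem~2.1]{OVV}. If $\langle\langle a_1,\dots,a_n\rangle\rangle$ is isotropic, equivalently $(a_1,\dots,a_n)=0$, then $q$ is already isotropic, $Q$ has a $K$-point, $f^\ast$ is injective, and the claim is trivial; so I may assume $\langle\langle a_1,\dots,a_n\rangle\rangle$ is anisotropic.

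\emph{Step 2: the OVV exact sequence and the norm residue isomorphism.} For this quadric $Q$, \cite[Theorem~2.1]{OVV} provides an exact sequence of mod $2$ Milnor $K$-groups $K^M_0(K)/2\to K^M_n(K)/2\to K^M_n(K(Q))/2$, in which the first arrow is multiplication by the symbol $\{a_1,\dots,a_n\}$ and the second is the restriction map; exactness at the middle term says precisely that $\ker\bigl(K^M_n(K)/2\to K^M_n(K(Q))/2\bigr)$ is the subgroup generated by $\{a_1,\dots,a_n\}$ (and in particular, as a sanity check, that this symbol restricts to zero over $K(Q)$). By Voevodsky's theorem \cite{voevodsky}, the norm residue homomorphism $h^m_F\colon K^M_m(F)/2\to H^m(F,\mu_2^{\otimes m})$ is an isomorphism for every field $F$ of characteristic $0$ and every $m$, is natural in $F$, and sends $\{b_1,\dots,b_m\}$ to $(b_1,\dots,b_m)$. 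Applying this with $F=K$ and $F=K(Q)$ and invoking naturality along $K\hookrightarrow K(Q)$ gives a commutative square identifying the restriction map on $K^M_n/2$ with $f^\ast$. Consequently $\ker f^\ast=h^n_K\bigl(\ker(K^M_n(K)/2\to K^M_n(K(Q))/2)\bigr)$ is generated by $h^n_K(\{a_1,\dots,a_n\})=(a_1,\dots,a_n)$, which is the assertion.

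\emph{Main obstacle.} All of the genuine content is contained in the two cited deep theorems, so the difficulty here is organizational rather than conceptual: one must make sure that the distinguished quadric and the grading used in \cite[Theorem~2.1]{OVV} are correctly matched with the arbitrary Pfister neighbour $q$ appearing in Theorem~\ref{thm:OVV}, which is exactly what the stable-birationality reduction of Step~1 accomplishes, and one should not overlook the degenerate isotropic case. (If \cite[Theorem~2.1]{OVV} is already stated for an arbitrary Pfister neighbour, Step~1 is unnecessary and the result is immediate from Step~2.)
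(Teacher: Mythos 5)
Your proof is correct and follows essentially the same route as the paper: reduce to the particular Pfister neighbour handled in \cite[Theorem 2.1]{OVV} by stable birationality (Lemma \ref{lem:Pfister}), then invoke the OVV exact sequence together with Voevodsky's norm residue isomorphism. The one point where your bookkeeping differs from the paper's is the justification that $\ker f^\ast$ only depends on the stable birational class of $Q$ over $K$: the paper observes that $\im(f^\ast)$ lands in $H^n_{nr}(K(Q)/K,\mu_2^{\otimes n})$ and appeals to the stable birational invariance of unramified cohomology from \cite{CTO}, whereas you argue directly that $H^m(F,\mu_2^{\otimes m})\hookrightarrow H^m(F(t),\mu_2^{\otimes m})$ is split injective, so $\ker\bigl(H^n(K)\to H^n(K(Q))\bigr)=\ker\bigl(H^n(K)\to H^n(K(Q\times\CP^N))\bigr)$ for any $N$. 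Your version is marginally more self-contained (it avoids the detour through unramified cohomology for this step), but the two justifications are interchangeable and of the same depth. Your explicit treatment of the isotropic case, where the Pfister form is hyperbolic, $q$ is isotropic, and $(a_1,\dots,a_n)=0$, is a sensible sanity check that the paper leaves implicit; note only that when $\dim Q=0$ one cannot speak of $K(Q)$, but this degenerate case does not arise in the paper's applications.
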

\begin{proof}
By \cite[Theorem 2.1]{OVV} and  
 \cite{voevodsky}, the result holds for the Pfister neighbour $q=\langle\langle a_1,\dots ,a_{n-1}\rangle\rangle \oplus \langle -a_n \rangle$.
The stated result follows therefore from Lemma \ref{lem:Pfister}, because $\im(f^\ast)\subset H^n_{nr}(K(Q)\slash K,\mu_2^{\otimes n})$ and unramified cohomology is a stable birational invariant \cite[Proposition 1.2]{CTO}.
\end{proof}

\subsection{Quadrics \`a la Artin--Mumford and Colliot-Th\'el\`ene--Ojanguren}
\label{subsec:CTO}
 
The following definition summarizes the conditions in  \cite[Propositions 2.1 and 3.1]{CTO} of Colliot-Th\'el\`ene and Ojanguren's paper, where the cases $n=2$ and $3$ are studied.

\begin{definition} \label{def:CTOtype}
Let $n\geq 2 $ be an integer and consider the function field $K=\C(\CP^{n})$.
Suppose that there are elements $a_1,\dots ,a_{n-1},b_1,b_2\in K^\ast$ such that for $j=1,2$, the class $\alpha_j:=(a_1,\dots ,a_{n-1},b_j)\in H^n(K,\mu_2^{\otimes n})$ is nonzero 
and satisfies the following:
\begin{enumerate} 
\item[($\ast$)]
for any $\nu\in \Val(K\slash \C)$, $\del_\nu^n \alpha_j=0$ for $j=1$ or $2$. 
\end{enumerate}
Then any projective quadric $Q=\{q=0\}$ over $K$ defined by a Pfister neighbour $q$ of the $n$-fold Pfister form $\langle\langle a_1,\dots ,a_{n-1},b_1b_2 \rangle\rangle$ is called a quadric of CTO type. 
\end{definition}
 
Since Pfister neighbours are non-degenerate by definition, we note that CTO type quadrics are always smooth.
 
 The results in \cite{CTO} can be summarized as follows: if $n=2$ or $3$, then CTO type quadrics exist and have nontrivial unramified $\Z\slash 2$-cohomology in degree $n$; 
the Artin--Mumford example  \cite{artin-mumford} is a CTO type conic over $\C(\CP^2)$. 

While the proof that CTO type quadrics exist for $n=2,3$ in \cite{CTO} is quite subtle, the argument which proves non-triviality of $H^n_{nr}(K(Q)\slash \C,\mu_2^{\otimes n})$ works (thanks to Theorem \ref{thm:OVV} of Orlov--Vishik--Voevodsky) in arbitrary dimensions as follows. 

\begin{proposition}[Colliot-Th\'el\`ene--Ojanguren] \label{prop:CTO}
Let $n\geq 2$ and let $f:Q\longrightarrow \Spec K$ be a CTO type quadric over $K=\C(\CP^{n})$.
Then, $0\neq  f^\ast \alpha_1\in H^n_{nr}(K(Q)\slash \C,\mu_2^{\otimes n})$.
\end{proposition}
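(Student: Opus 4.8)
The plan is to read everything off from Theorem \ref{thm:OVV} (Orlov--Vishik--Voevodsky), the residue-compatibility Lemma \ref{lem:residue:compatible}, and the defining condition $(\ast)$ of a CTO type quadric. First I would record the elementary identity $(a_1,\dots,a_{n-1},b_1b_2)=\alpha_1+\alpha_2$ in $H^n(K,\mu_2^{\otimes n})$, which holds because the symbol is additive in its last entry. Since $q$ is, by Definition \ref{def:CTOtype}, a Pfister neighbour of $\langle\langle a_1,\dots,a_{n-1},b_1b_2\rangle\rangle$, Theorem \ref{thm:OVV} tells us that the kernel of $f^\ast\colon H^n(K,\mu_2^{\otimes n})\to H^n(K(Q),\mu_2^{\otimes n})$ is generated by $\alpha_1+\alpha_2$. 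In particular $f^\ast\alpha_1=f^\ast\alpha_2$, and $f^\ast\alpha_1=0$ would force $\alpha_1\in\{0,\alpha_1+\alpha_2\}$: the first case is excluded since $\alpha_1\neq 0$ by hypothesis, and the second would give $\alpha_2=0$, again contrary to hypothesis. Hence $0\neq f^\ast\alpha_1=f^\ast\alpha_2$.

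It remains to show that $f^\ast\alpha_1$ is unramified over $\C$, i.e.\ that $\del^n_\nu(f^\ast\alpha_1)=0$ for every $\nu\in\Val(K(Q)/\C)$. Write $\mathcal O_\nu$ for the valuation ring and $E$ for its residue field, and look at the restriction of $\nu$ to $K$. If this restriction is trivial, then $a_1,\dots,a_{n-1},b_1\in\mathcal O_\nu^\ast$, so $f^\ast\alpha_1$ is a symbol in units of $\mathcal O_\nu$ and its residue vanishes (the case $m=0$ of Lemma \ref{lem:residue}). If the restriction is nontrivial it defines a discrete rank-one valuation $\mu$ on $K$ with valuation ring $\mathcal O_\nu\cap K$ and $e:=\nu(\pi_\mu)\geq 1$ for a uniformizer $\pi_\mu$ of $\mu$. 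Applying Lemma \ref{lem:residue:compatible} to $\Spec\mathcal O_\nu\to\Spec(\mathcal O_\nu\cap K)$ gives $\del^n_\nu(f^\ast\alpha_j)=e\cdot g^\ast(\del^n_\mu\alpha_j)$ for $j=1,2$, where $g^\ast\colon H^{n-1}(\kappa_\mu,\mu_2^{\otimes(n-1)})\to H^{n-1}(E,\mu_2^{\otimes(n-1)})$ is restriction (the multiplicity $e$ will play no role). By condition $(\ast)$ applied to $\mu$, at least one of $\del^n_\mu\alpha_1$, $\del^n_\mu\alpha_2$ vanishes. If $\del^n_\mu\alpha_1=0$ we obtain $\del^n_\nu(f^\ast\alpha_1)=0$ at once; if instead $\del^n_\mu\alpha_2=0$ we obtain $\del^n_\nu(f^\ast\alpha_2)=0$, and then $\del^n_\nu(f^\ast\alpha_1)=\del^n_\nu(f^\ast\alpha_2)=0$ by the identity $f^\ast\alpha_1=f^\ast\alpha_2$ established above. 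Since $\C$ is algebraically closed, this yields $0\neq f^\ast\alpha_1\in H^n_{nr}(K(Q)/\C,\mu_2^{\otimes n})$.

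The genuinely delicate step is the last case, in which $\nu$ dominates $K$: each of $\alpha_1,\alpha_2$ may well be ramified along $\mu$, so the vanishing of $\del^n_\nu(f^\ast\alpha_1)$ cannot be seen from $\alpha_1$ in isolation. What makes it work is the interplay between the \emph{disjunctive} form of $(\ast)$ and the identity $f^\ast\alpha_1=f^\ast\alpha_2$, the latter being precisely the statement that $\langle\langle a_1,\dots,a_{n-1},b_1b_2\rangle\rangle$ becomes isotropic — hence hyperbolic, hence of trivial Arason invariant — over $K(Q)$; Lemma \ref{lem:residue:compatible} is the tool that transfers the hypothesis on $\mu$ to a statement about $\nu$. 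One should also note that $(\ast)$ is being invoked for the restricted valuation $\mu=\nu|_K$, which may fail to be divisorial; this is harmless in practice because in the concrete constructions the ramification of $\alpha_1$ and $\alpha_2$ is controlled explicitly enough that $(\ast)$ persists for all geometric discrete rank-one valuations of $K/\C$ (alternatively one first reduces to divisorial $\mu$ after a birational modification of the chosen model of $K$).
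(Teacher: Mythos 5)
Your proof is correct and follows essentially the same route as the paper: use Theorem~\ref{thm:OVV} to identify $f^\ast\alpha_1=f^\ast\alpha_2$ and to pin down the kernel, split on whether $\nu|_K$ is trivial, and invoke Lemma~\ref{lem:residue:compatible} together with the disjunctive condition $(\ast)$. Your concluding hedge about $\mu=\nu|_K$ possibly ``failing to be divisorial'' is unnecessary: by \cite[Proposition~1.4]{merkurjev} the restriction of any $\nu\in\Val(K(Q)/\C)$ to the subfield $K$, when nontrivial, is automatically a geometric discrete valuation of rank one, i.e.\ lies in $\Val(K/\C)$, so $(\ast)$ applies directly and no birational modification or extra control on the ramification is needed.
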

\begin{proof}
By Theorem \ref{thm:OVV}, $f^\ast \alpha_1=f^\ast \alpha_2$ and we denote this class by $\alpha'$.
Let $\nu\in \Val(K(Q)\slash \C)$ and consider the restriction $\mu:=\nu|_K$.
If $\mu$ is trivial, then $\del_\nu \alpha'=0 $ by Lemma \ref{lem:residue}.
Otherwise, $\mu \in \Val(K\slash \C)$ by \cite[Proposition 1.4]{merkurjev}.
By Lemma \ref{lem:residue:compatible}, there is some $e\in \Z$ such that $\del^n_\nu \alpha'=e\cdot f^\ast (\del^n_{\mu}\alpha_j)$ for $j=1,2$.
Hence, $\del^n_\nu \alpha'=0$, because $\del_{\mu}^n \alpha_j=0$ for $j=1$ or $2$ by assumptions.
Therefore, $\alpha'=f^\ast \alpha_1\in H^n_{nr}(K(Q)\slash \C,\mu_2^{\otimes n})$ is unramified over $\C$. 
To prove that it is nonzero, it suffices by Theorem \ref{thm:OVV} to see that $\alpha_1\neq 0$ and $\alpha_1\neq \alpha_1+\alpha_2$.
This follows from $\alpha_j\neq 0$ for all $j=1,2$. 
\end{proof}

\begin{remark} 
Proposition \ref{prop:CTO} implies that CTO type quadrics are always anisotropic.
\end{remark}

\subsection{Quadric bundles} \label{subsec:quadricbundles}
In this section we work over an algebraically closed field $k$ of characteristic different from two; 
as we will be applying Bertini's theorem on base point free linear series, we will sometimes need to restrict further to the case $\operatorname{char}(k)=0$.
%
A quadric bundle is a flat morphism $f:X\longrightarrow S$ of projective varieties over $k$ whose generic fibre is a smooth quadric over $k(S)$. 
If $f$ is not assumed to be flat, then $X$ is called weak quadric bundle.

Let $q:\mathcal E\longrightarrow L$ 
be a generically non-degenerate line bundle valued quadratic form on some vector bundle $\mathcal E$ on $S$ such that $q_s\neq 0$ for all $s\in S$.
Then $q\in H^0(S,\Sym^2(\mathcal E^\vee)\otimes L)$ and the hypersurface $X:=\{q=0\}\subset \CP(\mathcal E)$ is a quadric bundle over $S$; flatness follows because all fibres $X_s=\{q_s=0\}\subset \CP(\mathcal E_s)$ have the same Hilbert polynomial. 
The degeneration locus on $S$ is given by the divisor where $q$ does not have full rank.

We will always assume that $\mathcal E=\bigoplus_{i=0}^{r+1} L_i^{-1}$ splits into a sum of line bundles.
Under this assumption, $q$ corresponds to a symmetric matrix $A=(a_{ij})$, where $a_{ij}$ is a global section of $L_i\otimes L_j\otimes L $. 
Locally over the base $S$, $X$ is given by
\begin{align} \label{eq:X:2}
\sum_{i,j=0}^{r+1} a_{ij} z_iz_j=0  ,
\end{align}
where $z_i$ denotes a local coordinate which trivializes $L^{-1}_i\subset \mathcal E$.
If $a_{ij}=0$ for $i\neq j$, then we also write $q:=\langle a_{00},\dots , a_{r+1,r+1}\rangle$.

%

\begin{lemma} \label{lem:q=0:quadricbundle}
Let $k$ be an algebraically closed field of characteristic zero.
Let $S$ be a projective variety over $k$, and let $L_1,\dots ,L_{r+1}$ and $L$ be line bundles on $S$ such that $L_i\otimes L_j\otimes L$ is base point free for all $i$ and $j$.
Consider $\mathcal E=\bigoplus_{i=0}^{r+1} L_i^{-1}$ and let $q\in H^0(S,\Sym^2(\mathcal E^\vee)\otimes L)$ be a general section.
Then the hypersurface $X:=\{q=0\}\subset \CP(\mathcal E)$ satisfies the following.
\begin{enumerate}
\item If $S$ is smooth, then $X$ is smooth.
\item If $\binom{r+3}{2}>\dim(S)$, or $L_i\otimes L_j\otimes L$ is trivial for some $i$ and $j$, then $X\longrightarrow S$ is a quadric bundle, i.e.\ $X$ is flat over $S$ and the generic fibre is a smooth quadric.
\end{enumerate}
\end{lemma}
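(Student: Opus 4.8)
The plan is to deduce both parts from two facts: that the line bundle $\OO_{\CP(\mathcal E)}(2)\otimes\pi^\ast L$ on the projective bundle $\pi:\CP(\mathcal E)\longrightarrow S$ is globally generated, and an elementary dimension count on its space of sections. Write $\mathcal E^\vee=\bigoplus_{i=0}^{r+1}L_i$. In the conventions of the paper one has $\pi_\ast\bigl(\OO_{\CP(\mathcal E)}(2)\otimes\pi^\ast L\bigr)\cong\Sym^2(\mathcal E^\vee)\otimes L=\bigoplus_{0\leq i\leq j\leq r+1}L_i\otimes L_j\otimes L$, so that $H^0\bigl(\CP(\mathcal E),\OO_{\CP(\mathcal E)}(2)\otimes\pi^\ast L\bigr)=H^0\bigl(S,\Sym^2(\mathcal E^\vee)\otimes L\bigr)$, and the members of the complete linear system $\mathcal P:=\bigl|\OO_{\CP(\mathcal E)}(2)\otimes\pi^\ast L\bigr|$ are exactly the hypersurfaces $X=\{q=0\}$ attached to sections $q=(a_{ij})$ with $a_{ij}\in H^0(S,L_i\otimes L_j\otimes L)$; in particular a general section $q$ corresponds to a general member of $\mathcal P$.

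For (1), I would first observe that $\mathcal P$ is base point free. If $x\in\CP(\mathcal E)$ lies over $s\in S$, pick a fibre coordinate $z_i$ with $z_i(x)\neq 0$; since $L_i\otimes L_i\otimes L$ is base point free there is a section $a_{ii}\in H^0(S,L_i^{\otimes 2}\otimes L)$ with $a_{ii}(s)\neq 0$, and then $a_{ii}z_i^2\in H^0\bigl(\CP(\mathcal E),\OO_{\CP(\mathcal E)}(2)\otimes\pi^\ast L\bigr)$ does not vanish at $x$. (This is the only place where base point freeness of the $L_i\otimes L_j\otimes L$ enters in (1), and only for $i=j$.) When $S$ is smooth, $\CP(\mathcal E)$ is smooth, and a base point free linear system on a smooth projective variety over $\C$ has a smooth general member: one applies Bertini's theorem to the morphism to projective space defined by $\mathcal P$, or, what amounts to the same, generic smoothness to the universal hypersurface $\{(x,q):q(x)=0\}\to\mathcal P$. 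Hence a general $q$ yields a smooth $X$.

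For (2), the point is to arrange $q_s\neq 0$ for \emph{every} $s\in S$; then each fibre $X_s\subset\CP^{r+1}$ is cut out by a nonzero quadratic form, the sequence $0\to\OO_{\CP^{r+1}}(-2)\to\OO_{\CP^{r+1}}\to\OO_{X_s}\to 0$ shows that the Hilbert polynomial of $X_s$ is independent of $s$, and so $X\longrightarrow S$ is flat (flatness over the integral base $S$ being equivalent to constancy of the fibrewise Hilbert polynomial). If $L_i\otimes L_j\otimes L\cong\OO_S$ for some $i,j$, then $a_{ij}\in H^0(S,\OO_S)=\C$ is a scalar which is nonzero for general $q$, and then $a_{ij}z_iz_j$ is a nonzero summand of $q_s$ for every $s$. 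If instead $\binom{r+3}{2}>\dim(S)$, consider $\mathcal Z=\{(q,s)\in\mathcal P\times S\mid q_s=0\}$: for fixed $s$ the fibre over $s$ is the linear subspace of $\mathcal P$ cut out by the $\binom{r+3}{2}$ conditions $a_{ij}(s)=0$, and these are independent because $V=\bigoplus_{i\leq j}H^0(S,L_i\otimes L_j\otimes L)$ and, by base point freeness, each evaluation $H^0(S,L_i\otimes L_j\otimes L)\to\C$ at $s$ is surjective; hence $\dim\mathcal Z\leq\dim\mathcal P-\binom{r+3}{2}+\dim(S)<\dim\mathcal P$, so $\pr_1(\mathcal Z)\subsetneq\mathcal P$ and a general $q$ avoids it. Finally, fixing any $s_0\in S$, base point freeness makes the evaluation $q\mapsto\bigl(a_{ij}(s_0)\bigr)$ onto the space of symmetric $(r+2)\times(r+2)$ matrices surjective, so for general $q$ the matrix $A(s_0)$ is invertible; thus $\det A$ is not identically zero on $S$, the degeneration locus $\{\det A=0\}\subsetneq S$ is proper, and the generic fibre $X_\eta$ is a smooth quadric over $\C(S)$. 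Altogether $X\longrightarrow S$ is a quadric bundle.

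I expect the one genuinely delicate point to be the use of Bertini in (1): the morphism $\CP(\mathcal E)\to\CP^N$ associated with $\mathcal P$ need not be a closed immersion, so rather than the textbook hyperplane-section form of Bertini one should argue through generic smoothness of the universal family over $\mathcal P$, whose total space is smooth because it is a projective bundle over the smooth variety $\CP(\mathcal E)$. Everything else — the global generation of $\mathcal P$, the flatness criterion, and the two dimension counts — is routine; it is worth noting that the hypotheses in (2) are sharp for flatness, since if $\binom{r+3}{2}<\dim(S)$ and no $L_i\otimes L_j\otimes L$ is trivial then \emph{every} member of $\mathcal P$ has a fibre with $q_s=0$.
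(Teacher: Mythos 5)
Your proof is correct and follows essentially the same route as the paper: identify $H^0(S,\Sym^2(\mathcal E^\vee)\otimes L)$ with $H^0(\CP(\mathcal E),\OO_{\CP(\mathcal E)}(2)\otimes\pi^\ast L)$, use the sections $a_{ii}z_i^2$ to prove base point freeness and then Bertini for (1), and for (2) show $q_s\neq 0$ for all $s$ and conclude flatness via constancy of the Hilbert polynomial. You merely spell out the dimension count and the Bertini application in more detail than the paper does; the only questionable bit is the final parenthetical sharpness remark (that $\dim\mathcal Z>\dim\mathcal P$ would force $\pr_1(\mathcal Z)=\mathcal P$), which does not follow just from a dimension inequality, but this is an aside and does not affect the proof of the lemma.
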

\begin{proof}
Consider the natural projection $\pi:\CP(\mathcal E)\longrightarrow S$.
Then, $\pi_\ast \mathcal O_{\CP(\mathcal E)}(k)=\Sym^k(\mathcal E^\vee)$ and so 
$$
H^0(S,\Sym^2(\mathcal E^\vee)\otimes L)\cong H^0(\CP(\mathcal E), \mathcal O_{\CP(\mathcal E)}(2)\otimes \pi^\ast L) 
$$
by the projection formula. 
To prove the first assertion, it thus suffices by Bertini's theorem in characteristic zero to see that $\mathcal O_{\CP(\mathcal E)}(2)\otimes \pi^\ast L$ is base point free.
The latter follows by considering the global sections $a_{ii}z_i^2$ for $i=0,\dots ,r+1$ and with varying $a_{ii}\in H^0(S,L_i^{\otimes 2}\otimes L)$, because $L_i^{\otimes 2}\otimes L$ is base point free by assumption.

It remains to prove the second item.
Since $L_i\otimes L_j\otimes L$ is base point free for all $i,j$ and $q$ is general, the generic fibre of $f:X\longrightarrow S$ is smooth.
For the same reason, 
$q_s\neq 0$ for all $s\in S$ if $\binom{r+3}{2}>\dim(S)$. 
If $L_i\otimes L_j\otimes L$ is trivial for some $i$ and $j$, then $q_s\neq 0$ for all $s$ is clear.
Hence, $X$ is flat over $S$ in either case, which proves the lemma.
\end{proof}

%

\begin{lemma} \label{lem:rationaldefotype}
Let $k$ be an algebraically closed field of characteristic zero.
Let $S$ be a smooth projective rational variety over $k$ and let $L_0,\dots,L_{r+1}$ and $L$ be line bundles on $S$ such that $L_i\otimes L_j\otimes L$ is base point free for all $i,j$.
Let $X$ be a smooth $r$-fold quadric bundle over $S$, given by a symmetric matrix $A=(a_{ij})$ of global sections $a_{ij}\in H^0(S,L_i\otimes L_j\otimes L)$ as in (\ref{eq:X:2}) above. 
If $r\geq \dim(S)$, then $X$ deforms to a smooth rational variety over $k$.

More precisely, if $r\geq \dim(S)$, $a_{mm}=0$ for some $0\leq m\leq r+1$, and 
 the remaining $a_{ij}$ are sufficiently general, 
 then the corresponding quadric bundle $X$ is smooth and rational.
\end{lemma}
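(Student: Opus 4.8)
The plan is to exploit the vanishing diagonal entry $a_{mm}=0$ to produce a rational section of the quadric bundle, which by Springer's theorem (a rational point in the generic fibre) forces the generic fibre to be a rational quadric over $\C(S)$, hence $X$ is rational since $S$ is rational. Concretely, after reindexing we may assume $a_{00}=0$. Then in the local description \eqref{eq:X:2}, the locus $z_1=\dots=z_{r+1}=0$, i.e.\ the section $\sigma:S\to \CP(\mathcal E)$ given by $[1:0:\cdots:0]$ in the coordinates $z_i$, lies on $X$ because every surviving monomial $a_{ij}z_iz_j$ vanishes when only $z_0\neq 0$ and $a_{00}=0$. Globally this is the sub-bundle $\CP(L_0^{-1})\subset\CP(\mathcal E)$, a copy of $S$, and it is contained in $X$; so $f:X\to S$ has an honest section, a fortiori a rational multisection of odd degree $1$.

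**Next I would** translate this into rationality of $X$. The generic fibre $X_\eta$ is a smooth quadric over $K=\C(S)$ which, by the section, carries a $K$-rational point; a smooth quadric of dimension $r\geq 1$ with a rational point is rational over its base field (project from the point). Hence $X_\eta$ is $K$-rational, i.e.\ $\C(X)\cong K(t_1,\dots,t_r)=\C(S)(t_1,\dots,t_r)$, which is purely transcendental over $\C$ since $S$ is rational. Therefore $X$ is rational. For the smoothness assertion, I would invoke Lemma \ref{lem:q=0:quadricbundle}(1): even with the linear constraint $a_{mm}=0$ imposed, the remaining sections $a_{ii}z_i^2$ ($i\neq m$) together with $a_{ij}z_iz_j$ still base-point-freely generate $\mathcal O_{\CP(\mathcal E)}(2)\otimes\pi^\ast L$ on the constrained linear system, so a general such $q$ gives a smooth $X$ by Bertini — the point being that the hyperplane $\{a_{mm}=0\}$ in $H^0(S,\Sym^2(\mathcal E^\vee)\otimes L)$ is still base-point free, since for $i\neq m$ the monomials $a_{ii}z_i^2$ suffice to separate tangent directions. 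One must also check flatness, which holds here because $a_{mm}=0$ makes $q_s\neq 0$ automatic on the section-locus argument, or more simply because the hypothesis $r\geq\dim S$ gives $\binom{r+3}{2}>\dim S$, so Lemma \ref{lem:q=0:quadricbundle}(2) applies.

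**The first assertion** (that an arbitrary smooth $X$ with $r\geq\dim S$ deforms to a rational one) then follows by a deformation argument: the space of symmetric matrices $(a_{ij})$ with $a_{mm}=0$ and the rest general is a linear subspace of the (irreducible) parameter space of all such $X$, and the general member of the whole space is smooth (Lemma \ref{lem:q=0:quadricbundle}(1)), so $X$ lies in the same connected, in fact irreducible, family of smooth quadric bundles as a member with $a_{mm}=0$; the latter is rational by the above. Since all smooth members form a connected family, $X$ deforms to this rational one.

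**The main obstacle** I expect is the smoothness-under-constraint verification: one must be slightly careful that imposing $a_{mm}=0$ does not introduce a base locus for the linear system $|\mathcal O_{\CP(\mathcal E)}(2)\otimes\pi^\ast L|$ restricted to this hyperplane, and hence that Bertini still yields a smooth general $X$. This is true because, with $r+2\geq 3$ variables $z_i$ and the freedom in $a_{ii}$ for $i\neq m$ (recall $L_i^{\otimes 2}\otimes L$ is base-point free), the sections $a_{ii}z_i^2$ for a single fixed $i\neq m$ already base-point-freely generate along the fibre direction $z_i\neq 0$, and the off-diagonal terms handle the remaining points; a short local computation at a point of $\CP(\mathcal E)$ confirms that the general constrained $q$ and its differential do not simultaneously vanish. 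Everything else — existence of the section, rationality of a pointed quadric, purely transcendental function field — is standard.
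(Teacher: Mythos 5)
Your rationality argument and the deformation-equivalence argument are both correct and essentially the same as the paper's; the section $\CP(L_m^{-1})\subset X$ makes $X$ rational via projection from the section point in the generic fibre (Springer's theorem is not actually needed here, only projection from a rational point). The genuine gap is in the smoothness verification, and it is not a technicality that ``a short local computation confirms'': your claim that imposing $a_{mm}=0$ leaves the linear system base-point free on $\CP(\mathcal E)$ is false. At any point of the section $\CP(L_m^{-1})=\{z_i=0\ \forall i\neq m\}$, \emph{every} monomial $a_{ij}z_iz_j$ with $(i,j)\neq(m,m)$ vanishes (since at least one of $z_i,z_j$ is zero), and $a_{mm}z_m^2$ has been killed by hypothesis. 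So the section is contained in the base locus of the constrained system, Bertini gives nothing there, and this is precisely where your smoothness argument breaks. The off-diagonal terms do \emph{not} ``handle the remaining points'' as sections — they vanish there too; it is their \emph{derivatives} that matter.

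This is where the hypothesis $r\geq\dim(S)$ is actually used for smoothness (not only for flatness via $\binom{r+3}{2}>\dim S$, which is the only role you give it). The paper's proof applies the Jacobian criterion along the base section: a singular point there must lie over a point $s\in S$ where $a_{m,i}(s)=0$ simultaneously for all $i\neq m$. These are $r+1$ general sections of base-point-free line bundles on $S$, so their common zero locus has codimension $\geq r+1>\dim S$ and is empty exactly because $r\geq\dim S$. Without this step the statement is simply wrong: for $r<\dim S$ the general member with $a_{mm}=0$ would have singular points along the section. You should replace the assertion of base-point freeness of the constrained system by a Jacobian analysis along the unavoidable base section, and flag explicitly that $r\geq\dim S$ is the hypothesis that saves smoothness there.
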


\begin{proof} 
Since all quadric bundles of the given type are parametrized by some open subset of $H^0(S,\Sym^2 (\mathcal E^\vee) \otimes L)$, where $\mathcal E^\vee =\bigoplus_{i=0}^{r+1} L_i$, we see that they have all the same deformation type.
It thus suffices to prove that for general sections $a_{ij}\in H^0(S,L_i\otimes L_j\otimes L)$ with $a_{mm}=0$, $X$ is smooth; $X$ is then automatically rational because it admits a section.  
We may for simplicity assume $m=r+1$.
Considering the sections $a_{ii}z_i^2$ and using that $L_i^{\otimes 2}\otimes L$ is base point free for $i=0,\dots ,r$, Bertini's theorem shows that the only possible singularity of $X$ occurs at $z_0=\dots=z_{r}=0$, where we use the local chart (\ref{eq:X:2}). 
Using the Jacobian criterion, we see that a singular point of $X$ must lie on the fibre above a point of $S$ where $a_{r+1,i}$ vanishes for $i=0,\dots ,r$.  
Since $r\geq \dim(S)$, this locus is empty by our base point freeness assumption. 
This proves the lemma.
\end{proof}

For $S=\CP^n_k$, we have $L=\mathcal O(l)$ and $L_i=\mathcal O(l_i)$ for some integers $l,l_0,\dots ,l_{r+1}$.
The deformation type of $X$ as in (\ref{eq:X:2}) is then completely determined by the integers $d_i:=2l_i+l$ for $i=0,\dots ,r+1$, i.e.\ by the degrees of the diagonal entries of the matrix $A$ in (\ref{eq:X:2}).
This observation gives rise to the following definition.

\begin{definition}  \label{def:type}
Let $k$ be an algebraically closed field of characteristic different from two.
Let $r,n\geq 1$ and $l_0,\dots ,l_{r+1},l$ be integers.  
An $r$-fold quadric bundle $X$ over $\CP^n_k$, which is given by a symmetric matrix $A=(a_{ij})$ of homogeneous polynomials of degrees $|a_{ij}|=l_i+l_j+l$ as in (\ref{eq:X:2}), is called of type $(d_i)_{0\leq i\leq r+1}$ if $d_i=2l_i+l$. 
\end{definition}
 
We usually assume that $d_i\geq 0$ for all $i$.
This is justified by the observation that if $d_i<0$ for some $i$, then $a_{ii}=0$ and so $z_i=1$ and $z_j=0$ for $j\neq i$ yields a section of $X\longrightarrow \CP^n_k$.
Hence, $X$ is automatically rational in that case.

If $\operatorname{char}(k)=0$, a smooth quadric bundle of type $(d_i)_{0\leq i\leq r+1}$ over $\CP^n_k$ exists by Lemma \ref{lem:q=0:quadricbundle} if all $d_i$ are non-negative of the same parity and additionally one of the following holds: $\binom{r+3}{2}>n$ or $d_i=0$ for some $i$.

The following is an immediate consequence of Lemma \ref{lem:rationaldefotype}.

\begin{corollary} \label{cor:rational-defotype}
Let $k$ be an algebraically closed field of characteristic zero.
Let $n$ and $r$ be positive integers with $r\geq n$ and let $(d_i)_{0\leq i\leq r+1}$ be a tuple of non-negative integers of the same parity.
Then some smooth $r$-fold quadric bundles of type $(d_i)_{0\leq i\leq r+1}$ over $\CP^n_k$ are rational. 
\end{corollary}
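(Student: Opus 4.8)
The plan is to read this off from Lemma \ref{lem:rationaldefotype} applied to the base $S=\CP^n$, which is smooth, projective and rational. First I would translate the tuple $(d_i)_{0\le i\le r+1}$ into a choice of line bundles. Let $l\in\{0,1\}$ have the common parity of the $d_i$ and put $l_i:=(d_i-l)/2$ for $0\le i\le r+1$; since every $d_i\ge 0$ has the same parity as $l$, each $l_i$ is a nonnegative integer. Setting $L_i:=\OO_{\CP^n}(l_i)$ and $L:=\OO_{\CP^n}(l)$, we have $L_i\otimes L_j\otimes L=\OO_{\CP^n}(l_i+l_j+l)$ of nonnegative degree, hence base point free, for all $i,j$; and by Definition \ref{def:type}, any smooth quadric bundle over $\CP^n$ given by a symmetric matrix $A=(a_{ij})$ with $a_{ij}\in H^0(\CP^n,L_i\otimes L_j\otimes L)$ as in (\ref{eq:X:2}) is of type $(d_i)_{0\le i\le r+1}$.

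Next I would invoke the ``more precisely'' part of Lemma \ref{lem:rationaldefotype}. Since $r\ge n=\dim(\CP^n)$, for a sufficiently general choice of the sections $a_{ij}$ with $a_{r+1,r+1}=0$, the associated hypersurface $X=\{q=0\}\subset\CP(\mathcal E)$, where $\mathcal E=\bigoplus_{i=0}^{r+1}\OO_{\CP^n}(-l_i)$, is smooth and rational: it carries the section $\{z_0=\dots=z_r=0\}$, on which $q=a_{r+1,r+1}z_{r+1}^2=0$. The one remaining point is that such an $X$ really is a quadric bundle over $\CP^n$, i.e.\ flat with smooth generic fibre. The generic fibre is a smooth quadric because $q$ is general and the $L_i\otimes L_j\otimes L$ are base point free; and $q_s\ne 0$ for every $s\in\CP^n$ already because the $r+1$ general diagonal sections $a_{00},\dots,a_{rr}$ of base-point-free line bundles on $\CP^n$ have empty common zero locus (here $r\ge n$ is used once more), so every fibre $X_s=\{q_s=0\}\subset\CP(\mathcal E_s)\cong\CP^{r+1}$ is a degree-two hypersurface and hence all fibres have the same Hilbert polynomial, giving flatness of $X\to\CP^n$. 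Thus $X$ is a smooth rational $r$-fold quadric bundle of type $(d_i)_{0\le i\le r+1}$ over $\CP^n$, as desired.

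I do not expect a genuine obstacle here; this is essentially the observation announced in the sentence preceding the corollary. The only things needing a little care are the parity bookkeeping that realizes $(d_i)$ by base-point-free line bundles, and the verification that the rational member produced by Lemma \ref{lem:rationaldefotype} is flat over $\CP^n$ --- both of which are immediate from $r\ge n$ together with the fact that a line bundle of nonnegative degree on $\CP^n$ is globally generated.
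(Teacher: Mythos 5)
Your proof is correct and takes the same route as the paper, which simply states the corollary as ``an immediate consequence of Lemma \ref{lem:rationaldefotype}.'' You have filled in the implicit bookkeeping (translating the degree tuple $(d_i)$ into base-point-free line bundles $L_i\otimes L_j\otimes L$ and checking flatness via the common zero locus of the diagonal entries, which uses $r\ge n$ exactly as Lemma \ref{lem:q=0:quadricbundle} would), so there is nothing missing.
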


The following two examples of quadric bundles are well-known. 

\begin{lemma} \label{lem:singhyper}
Let $k$ be an algebraically closed field of characteristic different from two.
Let $n,r$ be integers with $\binom{r+3}{2}>n>0$. 
Let $P\subset \CP^{n+r+1}_k$ be an $r$-plane, and let $ X\subset \CP^{n+r+1}_k$ be a general hypersurface  of degree $d$ with multiplicity $d-2$ along $P$.
Then, $X$ is birational to a general $r$-fold quadric bundle of type $(d-2,\dots ,d-2,d)$ over $\CP^n_k$.
\end{lemma}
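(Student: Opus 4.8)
The plan is to exhibit an explicit birational map between the singular hypersurface $X$ and a quadric bundle over $\CP^n$ by projecting away from the $r$-plane $P$. Choose homogeneous coordinates $[x_0:\dots:x_n:y_0:\dots:y_r]$ on $\CP^{n+r+1}$ so that $P=\{x_0=\dots=x_n=0\}$, and let $\pi:\CP^{n+r+1}\dashrightarrow \CP^n$ be the linear projection $[x:y]\mapsto[x_0:\dots:x_n]$, which is defined away from $P$. First I would expand the defining equation of $X$ according to the $y$-degree: since $X$ has multiplicity $d$ along $P$ and degree $d+2$, its equation has the shape
\[
\sum_{|I|=d+2} c_I(x) y^I
\]
with $c_I(x)$ of degree $|c_I|$ in the $x$-variables equal to $d+2-(\text{$y$-degree of } y^I)$, and all monomials $y^I$ of $y$-degree $>d$ occur with coefficients of $x$-degree $<2$, hence the terms of $y$-degree $d+2$, $d+1$, $d$ are grouped into $F_0(y)\cdot(\text{const in }x)$, no wait — more carefully, the terms of $y$-degree exactly $d$ carry a quadratic form in $x$, those of $y$-degree $d+1$ a linear form in $x$, and those of $y$-degree $d+2$ a constant; terms of $y$-degree $<d$ are forced to vanish by the multiplicity-$d$ condition. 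So the equation is
\[
A(x)\cdot q_2(y) + L(x)\cdot q_{1}(y) + q_0(y) = 0,
\]
where $q_j(y)$ is a vector of degree-$(d+2-j)$ forms in $y$ (for the generic such $X$ these can be taken as generic of the respective degrees), $A(x)$ runs over degree-$2$ forms and $L(x)$ over degree-$1$ forms.

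Next I would observe that on the affine chart $x_0=1$ this is, up to reorganizing, a quadratic equation in a suitable set of $r+2$ fibre coordinates. The cleanest route: the generic fibre of $\pi|_X$ over a point $s\in\CP^n$ is the variety in the $\CP^{r+1}$ with coordinates $y$ cut out by one equation which, after a coordinate change over $\C(\CP^n)$, is a rank-$(r+2)$ quadratic form — indeed, among the degree-$(d+2)$, $(d+1)$, $(d)$ monomials in $y$ one can pick out $y_0^{d},y_0^{d-1}y_1,\dots$ to realize the quadric-bundle structure of type $(d,\dots,d,d+2)$ in the sense of Definition~\ref{def:type}, the ``$d+2$'' entry coming from the single coefficient that is constant in $x$. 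I would then check that this matches the matrix description in \eqref{eq:X:2}: write $X'=\{\tilde q=0\}\subset\CP(\mathcal E)$ over $\CP^n$ with $\mathcal E=\OO_{\CP^n}^{\oplus(r+1)}\oplus\OO_{\CP^n}(-1)$ say (so $l_i$ chosen with $2l_i+l$ giving $(d,\dots,d,d+2)$), and exhibit an explicit birational map $X\dashrightarrow X'$ over $\CP^n$ whose inverse is given by the obvious rational section of the blow-up of $\CP^{n+r+1}$ along $P$ followed by the projection. Finally I would invoke Lemma~\ref{lem:q=0:quadricbundle}(ii), using $\binom{r+3}{2}>n$, to conclude that a general such $X'$ is genuinely a flat quadric bundle with smooth generic fibre.

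The main obstacle I expect is the bookkeeping needed to match the degrees $(d,\dots,d,d+2)$ precisely and to verify that ``general $X$ with multiplicity $d$ along $P$'' corresponds under this projection to ``general quadric bundle of type $(d,\dots,d,d+2)$'' — i.e.\ that the map on parameter spaces is dominant, so that genericity is preserved in both directions. This requires identifying which linear system of sections of $\Sym^2(\mathcal E^\vee)\otimes L$ is hit and checking it is the full (or at least a dense) one; the blow-up $\mathrm{Bl}_P\CP^{n+r+1}\to\CP^n$ is a $\CP^{r+1}$-bundle (projectivization of $\OO^{\oplus(r+1)}\oplus\OO(-1)$ restricted appropriately), and the strict transform of $X$ is a divisor in it whose class one computes; reading off that this divisor is exactly cut out by a section of $\OO_{\CP(\mathcal E)}(2)\otimes\pi^*\OO_{\CP^n}(l)$ for the right $l$ is where the ``type $(d,\dots,d,d+2)$'' emerges, and is the step that demands care rather than cleverness. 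Everything else — the birationality of the projection away from $P$, and the flatness/smoothness of the general fibre — is immediate from what precedes.
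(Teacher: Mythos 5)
Your high-level plan (project away from $P$ to $\CP^n$, realize the generic fibre as a quadric, match degrees to identify the type) is the same as the paper's, but the key algebraic step is carried out incorrectly: you have used the multiplicity condition backwards. Since $P=\{x_0=\dots=x_n=0\}$, the ideal of $P$ is $(x_0,\dots,x_n)$, so ``$X$ has multiplicity $d$ along $P$'' means that $f\in (x_0,\dots,x_n)^d$, i.e.\ every monomial of $f$ has $x$-degree at least $d$ and hence $y$-degree at most $2$. You asserted instead that the terms of $y$-degree less than $d$ vanish, which is the condition for multiplicity $d$ along the complementary $n$-plane $\{y_0=\dots=y_r=0\}$, and you wrote $f=A(x)q_2(y)+L(x)q_1(y)+q_0(y)$ with $q_j(y)$ of $y$-degree $d+2-j$. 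With that (wrong) shape, the fibre of $\pi|_X$ over a point of $\CP^n$ is a degree-$(d+2)$ hypersurface in the $y$'s, not a quadric, and no change of coordinates over $\C(\CP^n)$ (nor ``picking out $y_0^d, y_0^{d-1}y_1,\dots$'') turns it into one; the quadric structure you want would instead live over $\CP^r$, which is not what the lemma asks for.

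The correct decomposition is
\[
f=\sum_{i,j=0}^{r}a_{ij}(x)\,y_iy_j+\sum_{k=0}^{r}b_k(x)\,y_k+c(x),
\]
with $a_{ij}$, $b_k$, $c$ homogeneous in $x_0,\dots,x_n$ of degrees $d$, $d+1$, $d+2$ respectively. This is manifestly quadratic in $y$, so the fibre of $\pi$ over a point of $\CP^n$ is genuinely a (possibly affine) quadric in the $y$'s. Introducing a homogenizing variable $y_{r+1}$ and setting $a_{k,r+1}=a_{r+1,k}=\tfrac{1}{2}b_k$, $a_{r+1,r+1}=c$ turns this into a symmetric $(r+2)\times(r+2)$ matrix whose type, read off from the degree pattern $(d,\dots,d;\,d+1,\dots,d+1;\,d+2)$, is exactly $(d,\dots,d,d+2)$ in the sense of Definition~\ref{def:type}. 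The birationality $X\dashrightarrow X'$ is then immediate (on the chart $y_{r+1}=1$ it is the identity; geometrically $X'$ is the strict transform of $X$ under $\mathrm{Bl}_P$, hence the blow-up of $X$ along $P$), and generality matches generality because the coefficient data $(a_{ij},b_k,c)$ is arbitrary in both pictures. With this correction your argument reduces to the paper's; the remaining points you flag (invoking Lemma~\ref{lem:q=0:quadricbundle}\,(ii) with $\binom{r+3}{2}>n$, and checking dominance on parameter spaces) are fine.
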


\begin{proof} 
Choose coordinates $x_0,\dots ,x_{n},y_0,\dots ,y_{r}$ on $\CP^{n+r+1}_k$ such that $P=\{x_0=\dots =x_{n}=0\}$.
If $X=\{f=0\}$, then 
$$
f=\sum_{i,j=0}^{r}a_{ij}y_iy_j+\sum_{k=0}^{r}(a_{k,r+1}+a_{r+1,k})y_k+ a_{r+1,r+1} ,
$$
for some homogeneous polynomials $a_{ij}=a_{ji}$, $a_{k,r+1}=a_{r+1,k}$ and $a_{r+1,r+1}$ in $x_0,\dots ,x_{n}$ of degrees $d-2$, $d-1$ and $d$, respectively. 
We introduce an additional variable $y_{r+1}$ and homogenize the above equation with respect to the $y_i$'s.
This shows that the symmetric matrix $A=(a_{ij})_{0\leq i,j\leq r+1}$ corresponds to a general $r$-fold quadric bundle of type $(d-2,\dots ,d-2,d)$, which is clearly birational to $X$. 
(In fact, it is the blow-up $Bl_PX$.) 
\end{proof}

\begin{lemma}\label{lem:doublePn}
Let $k$ be an algebraically closed field of characteristic different from two, and let $n,r\geq 1$ be integers. 
Let $P\subset \CP^{n+r}_k$ be an $(r-1)$-plane, and let $D\subset \CP^{n+r}_k$ be a general hypersurface of even degree $d$ with multiplicity $d-2$ along $P$.
Then the double covering $X\stackrel{2:1}\longrightarrow \CP^{n+r}_k$, branched along $D$, is birational to a general $r$-fold quadric bundle of type $(0,d-2,\dots ,d-2,d)$ over $\CP^n_k$.
\end{lemma}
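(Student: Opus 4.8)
The plan is to run the same kind of argument as in the proof of Lemma~\ref{lem:singhyper}, realising the double cover fibrewise over $\CP^n$ as a quadric. First I would choose homogeneous coordinates $x_0,\dots,x_n,y_1,\dots,y_r$ on $\CP^{n+r}$ so that $P=\{x_0=\dots=x_n=0\}$, which is then an $(r-1)$-plane. Writing $D=\{g=0\}$, the condition that $g$ has degree $d+2$ and multiplicity $d$ along $P$ forces every monomial of $g$ to have $x$-degree at least $d$, so
\[
g=\sum_{i,j=1}^{r}a_{ij}y_iy_j+\sum_{k=1}^{r}b_ky_k+c ,
\]
with $a_{ij}=a_{ji}$, $b_k$, $c$ homogeneous in $x_0,\dots,x_n$ of degrees $d$, $d+1$, $d+2$, and general for general $D$; note that $d$ is even because $d+2$ is. The double cover is $X=\{w^2=g\}$, with $w$ of weight $(d+2)/2$.

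Next I would compose the degree-two map $X\to\CP^{n+r}$ with the linear projection $\CP^{n+r}\dashrightarrow\CP^n$ away from $P$ and compute the generic fibre of the composite over $K:=\C(\CP^n)$. The generic fibre of the projection is the $\CP^r$ spanned by $P$ and a point, with points $[\lambda x_0:\dots:\lambda x_n:y_1:\dots:y_r]$, in which $P$ cuts out the hyperplane $\{\lambda=0\}$; along it $g$ restricts to $\lambda^{d}Q$, where $Q=\sum_{i,j}a_{ij}y_iy_j+\lambda\sum_k b_ky_k+\lambda^2c$ is a quadratic form in the $r+1$ coordinates $\lambda,y_1,\dots,y_r$ over $K$. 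Since $d$ is even, on the chart $\lambda\neq 0$ (new coordinates $\bar y_i=y_i/\lambda$ and $\bar w=w/\lambda^{(d+2)/2}$) the generic fibre of $X\dashrightarrow\CP^n$ is the affine quadric $\{\bar w^2=\sum a_{ij}\bar y_i\bar y_j+\sum b_k\bar y_k+c\}$. This is exactly the affine chart $z_{r+1}\neq0$ of the generic fibre of the quadric bundle $Y\subset\CP(\mathcal E)$ over $\CP^n$, with $\mathcal E=\mathcal O\oplus\mathcal O(-d/2)^{\oplus r}\oplus\mathcal O(-d/2-1)$, defined by
\[
z_0^2=\sum_{i,j=1}^{r}a_{ij}z_iz_j+z_{r+1}\sum_{k=1}^{r}b_kz_k+c\,z_{r+1}^2 ,
\]
so $X$ is birational to $Y$. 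A check against Definition~\ref{def:type} — with $l=0$, $l_0=0$, $l_1=\dots=l_r=d/2$, $l_{r+1}=d/2+1$ — shows that $Y$ has type $(0,d,\dots,d,d+2)$; such quadric bundles exist for all $n,r\geq1$ because $d_0=0$ makes the trivial-summand hypothesis of Lemma~\ref{lem:q=0:quadricbundle} automatic, so that no condition $\binom{r+3}{2}>n$ is needed here.

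It remains to match $Y$ (for general $D$) with a general quadric bundle $Y'$ of type $(0,d,\dots,d,d+2)$, and this is the only delicate step. The generic fibre of $Y'$ over $K$ is a general $(r+2)$-dimensional quadratic form $\sum_{i,j}\mathbf a_{ij}z_iz_j$ whose coefficient $\mathbf a_{00}$ is a general nonzero constant — hence a unit and a square in $K$ — so completing the square in $z_0$ identifies it with $\langle 1\rangle\oplus Q'$ for a residual $(r+1)$-dimensional form $Q'$ whose entries are general forms of degrees $d$, $d+1$, $d+2$ (the modification $\mathbf a_{jk}\mapsto\mathbf a_{jk}-\mathbf a_{00}^{-1}\mathbf a_{0j}\mathbf a_{0k}$, and similarly for the entries involving $z_{r+1}$, being a bijection on the respective parameter spaces). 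But $\langle 1\rangle\oplus Q'$ is precisely the generic fibre of $Y$ for a suitable general $D$, so $Y'$ and $Y$ are birational over $\C$; combined with $X\sim Y$ this yields the lemma. Everything else is the homogeneity bookkeeping of the middle paragraph, routine once it is set up correctly; the main obstacle is this genericity match, and in particular the completion-of-the-square step, which relies on $\mathbf a_{00}$ being invertible — guaranteed by $d_0=0$ — and on $\C$ being algebraically closed.
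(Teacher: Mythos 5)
Your proof is correct and follows essentially the same route as the paper's: write the double cover as $w^2 = g$, expand $g$ in the $y$-variables, realise the resulting quadratic form as a quadric bundle over $\CP^n$ whose matrix has $a_{00}=1$ and $a_{0i}=0$ for $i\geq 1$, and then match this with a general quadric bundle of type $(0,d,\dots,d,d+2)$ by completing the square using that $a_{00}$ is a nonzero constant. The paper packages the first half by referring back to Lemma~\ref{lem:singhyper} with $s$ playing the role of $y_0$, and leaves the completing-the-square step implicit in the phrase ``we can transform $A$''; you simply make both halves explicit.
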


\begin{proof}
The double cover $X$ is given by $s^2=f$, where $D=\{f=0\}$.
Choosing coordinates $x_0,\dots ,x_n,y_1,\dots ,y_r$ of $\CP^{n+r}_k$, similarly as in the proof of Lemma \ref{lem:singhyper} shows that $X$ is birational to a quadric bundle over $\CP^n_k$ of type $(0,d-2,\dots ,d-2,d)$; the coordinate $s$ plays the role of $y_0$ in the proof of Lemma \ref{lem:singhyper}.
The corresponding symmetric matrix $A=(a_{ij})_{0\leq i,j\leq r+1}$ satisfies $a_{00}=1$ and $a_{i0}=0$ for $i\geq 1$; the remaining entries of $A$ are general.
Conversely, if $A=(a_{ij})_{0\leq i,j\leq r+1}$ is the symmetric matrix of a general $r$-fold quadric bundle of type $(0,d-2,\dots ,d-2,d)$, then $a_{00}$ is a nonzero constant and so we can transform $A$ into a symmetric matrix with $a_{00}=1$ and $a_{i0}=0$ for all $i\geq 1$.
This proves the lemma.
\end{proof}

\begin{proposition}[Voisin]\label{prop:voisin}
Let $d_0=0$, $d_1=d_2=2$ and $d_3=4$.
Let $W$ be the complex vector space of symmetric $4\times 4$-matrices $A=(a_{ij})_{0\leq i,j\leq 3}$ such that $a_{ij}\in \C[x_0,x_1,x_2]$ is homogeneous of degree $(d_i+d_j)/2$ with $a_{i0}=0$ for $i=1,2,3$.
Then the set of points in $\CP(W)$ which parametrize smooth quadric surface bundles of type $(0,2,2,4)$ over $\CP^2_\C$ with a rational section is dense in the analytic topology. 
\end{proposition}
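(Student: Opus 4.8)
The plan is to reduce the statement, in two steps, to the isotropy of the generic fibre as a quadratic form over $K=\C(\CP^2)$, and then to exploit the orthogonal splitting forced by the hypothesis $a_{i0}=0$.

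\emph{Reduction to sections and to isotropy.} If a quadric bundle $f\colon X\to S$ over a rational base $S$ admits a rational section, then the generic fibre $X_\eta$ is a smooth quadric over $\C(S)$ with a $\C(S)$-rational point, hence is $\C(S)$-rational, hence $X$ is rational; so for $S=\CP^2$ it suffices to show that the set of $[A]\in\CP(W)$ for which $X_A$ is smooth and $X_A\to\CP^2$ has a rational section is analytically dense. On the open dense locus $a_{00}\ne0$ the bundle $X_A$ is a double cover of $\CP(\OO(-1)^{\oplus2}\oplus\OO(-2))$ branched along $C'=\{q'=0\}$ with $q'=(a_{ij})_{1\le i,j\le3}$, and by Lemma~\ref{lem:q=0:quadricbundle} (Bertini applied to $C'$) $X_A$ is smooth for $[A]$ in a dense Zariski open; so only the rational-section condition has to be controlled. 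Writing $\mathcal E=\OO\oplus\OO(-1)^{\oplus2}\oplus\OO(-2)$ and $(l_0,\dots,l_3)=(0,1,1,2)$, a rational section is a nonzero tuple $(s_0,\dots,s_3)$, $s_i\in H^0(\CP^2,\OO(e-l_i))$ for some $e\ge0$, with $\sum_{i,j}a_{ij}s_is_j=0$ in $H^0(\CP^2,\OO(2e))$, i.e.\ $a_{00}s_0^2+\sum_{i,j\ge1}a_{ij}s_is_j=0$. Thus $X_A$ has a rational section if and only if the $4$-dimensional form $q_A=\langle a_{00}\rangle\perp q'$ over $K$ is isotropic.

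\emph{Structure of the locus.} For each fixed $e$ these are closed conditions on $[A]$ (and on the projectivised tuple jointly), and multiplying a given tuple by $x_0$ shows that the loci $Z_e\subseteq\CP(W)$ of bundles carrying a type-$e$ section satisfy $Z_e\subseteq Z_{e+1}$; moreover Lemma~\ref{lem:rationaldefotype}, applied with $r=\dim\CP^2=2$, already exhibits the codimension-$\binom{r+3}{2}$-type linear subspaces $\{a_{mm}=0\}$ inside this locus, on which $X_A$ is smooth and rational. The task is therefore to upgrade these explicit sub-families to an analytically dense family, for which the natural route is to prove that $q_A$ is isotropic over $K$ for $[A]$ in a Zariski-dense subset of $\CP(W)$ — ideally for a general $[A]$ — which then gives analytic density after intersecting with the smoothness locus.

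\emph{The main obstacle.} The heart of the matter is producing this isotropy, and here one must use the orthogonal splitting $q_A=\langle a_{00}\rangle\perp q'$ coming from $a_{i0}=0$. Completing the square in $z_3$ gives $q_A\cong\langle a_{00},a_{33}\rangle\perp q''$ with $q''$ binary and $\discr(q'')\equiv a_{33}\cdot\det(A')\pmod{(K^\ast)^2}$, and since $a_{00}$ is a nonzero constant, $\langle a_{00},a_{33}\rangle\cong\langle1,a_{33}\rangle$. The Hasse--Witt invariant $c(q_A)=c(q')\in\Br(K)[2]$ is a quaternion algebra over $K$, and using the determinantal identities $a_{33}\det(A')\equiv-(a_{12}a_{33}-a_{13}a_{23})^2\pmod{a_{11}a_{33}-a_{13}^2}$ and $a_{11}a_{33}-a_{13}^2\equiv-a_{13}^2\pmod{a_{33}}$ one checks that $c(q_A)$ is ramified at most along the discriminant octic $D=\{\det A'=0\}$. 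One then shows that such a class is of the form $(\det A',\mu)$ for a suitable $\mu\in K^\ast$ — equivalently that the unramified double cover of $D$ recording the ramification of $c(q_A)$ lifts as required — which is exactly the criterion for a $4$-dimensional form with discriminant $\det A'$ to be isotropic; hence $X_A$ admits a rational section and is rational. The delicate part, and the one I expect to be hardest, is precisely this ramification analysis together with the passage from "$c(q_A)$ ramified only along $D$" to "$q_A$ isotropic", carried out so as to remain inside the dense Zariski open where $X_A$ is smooth; it is here that the particular degrees of the type $(0,2,2,4)$ and the vanishing $a_{i0}=0$ are essential.
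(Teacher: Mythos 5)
There is a genuine gap, and it is fatal to the strategy, not just a technicality. You propose to establish that $q_A$ is isotropic over $K=\C(\CP^2)$ for $[A]$ ``in a Zariski-dense subset of $\CP(W)$ --- ideally for a general $[A]$,'' and to deduce analytic density from that. But isotropy for a general $[A]$ is \emph{false}: by the very results this proposition serves in the proof of Theorem~\ref{thm:defo}, the very general member of this family is not stably rational, hence has no rational section, hence $q_A$ is anisotropic for very general $[A]$. The set $\{[A]: q_A\ \text{isotropic}\}$ is a countable union of \emph{proper} closed algebraic subvarieties of $\CP(W)$; no purely algebraic argument over the generic point, and no Hasse--Witt/Clifford-invariant computation of the kind you sketch, can make that countable union Zariski dense (it is not), and Zariski density is anyway not what you want. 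The assertion is specifically about density in the analytic topology, which is a Noether--Lefschetz--type phenomenon.

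The paper's actual argument is of a completely different nature. Via Lemma~\ref{lem:doublePn} the family is identified with the universal family of (blow-ups of) double covers of $\CP^4$ branched along quartics singular along a fixed line. By Springer's theorem a rational multisection of odd degree already forces a rational section, and by the integral Hodge conjecture for codimension-two cycles on quadric surface bundles over surfaces (Colliot-Th\'el\`ene--Voisin, \cite[Corollaire~8.2]{CT-voisin}) the existence of such a multisection on $X_b$ is equivalent to the existence of an integral Hodge class of type $(2,2)$ meeting the generic fibre in odd degree. The proof then invokes Voisin's density theorem for the corresponding Hodge locus (\cite[Proposition 2.4]{voisin2}), an infinitesimal variation of Hodge structure argument, to conclude that the set of $b\in B$ carrying such a class is dense for the analytic topology. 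Your first reduction (rational section $\Leftrightarrow$ $q_A$ isotropic over $K$) is fine and is implicitly present, but without the Hodge-theoretic step the density claim is out of reach; the ramification/quaternion-algebra analysis you outline addresses the wrong question.
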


\begin{proof} 
There is a Zariski open subset $B\subset \CP(W)$ which parametrizes smooth quadric surface bundles of type $(d_0,d_1,d_2 ,d_3)$ over $\CP^2_\C$.
There is a universal family $\pi:\mathcal X\longrightarrow B$.
As we have seen in Lemma \ref{lem:doublePn}, this family coincides with the universal family of (blow-ups of) double covers of $\CP^4_\C$, branched along a quartic hypersurface which is singular along a fixed line.
If the fibre $X_b$ above $b\in B$ admits a rational multisection of odd degree, then $X_b$ admits a rational section by Springer's theorem \cite{springer}.  
Since the integral Hodge conjecture is known for codimension two cycles on quadric surface bundles over surfaces (cf.\ \cite[Corollaire 8.2]{CT-voisin}), it suffices to show that the set of points $b\in B$ such that $X_b$ admits a Hodge class of type $(2,2)$ which intersects the general fibre of $X_b\longrightarrow \CP^2_\C$ in odd degree is dense in $B$.
The latter is proven in \cite[Proposition 2.4]{voisin2}, which is not affected by the gap (cf.\ \cite{beauville-erratum}); similar arguments have later been used in \cite{HPT} and \cite{HPT3}. 
\end{proof}

\section{The specialization method via weak decompositions of the diagonal} 
\label{sec:specialization}

Recall from Section \ref{subsec:intro:specialize} that we aim to generalize the method of Voisin \cite{voisin} and Colliot-Th\'el\`ene--Pirutka \cite{CT-Pirutka} to degenerations where the special fibre is allowed to have quite arbitrary singularities and where an explicit resolution of those can be avoided.
The first step is the following small but crucial improvement of the original technique in \cite{voisin} and \cite{CT-Pirutka}; 
the proof is inspired by  \cite{voisin,CT-Pirutka},
Totaro's paper \cite{totaro} and the original arguments of Bloch and Srinivas.

\begin{proposition}\label{prop:degeneration}
Let $R$ be a discrete valuation ring with fraction field $K$ and residue field $k$, with $k$ algebraically closed.
Let $\pi: \mathcal X\longrightarrow \Spec R$ be a flat proper scheme of finite type over $R$ 
 with geometrically integral fibres. 
Let  $Y:=\mathcal X\times k$ be the special fibre and suppose that there is a resolution of singularities $\tau:\widetilde {Y}\longrightarrow Y$ with the following properties:
\begin{enumerate}
\item for some Rost cycle module $M^\ast$ over $k$, there is an unramified class $\alpha \in M^i_{nr}(\widetilde {Y})$ which is nontrivial, i.e.\ $\alpha\notin \im (M^i(k)\longrightarrow M^i_{nr}(\widetilde Y))$; 
 \label{item:prop:alpha} 
\item there is an open subset $U\subset Y$ such that $\tau^{-1}(U)\longrightarrow U$ is universally $\CH_0$-trivial, and such that each irreducible component $E_i$ of $\widetilde Y\setminus \tau^{-1}(U)$ is smooth and the restriction of $\alpha$ to $E_i$ is trivial.  
\end{enumerate}
Then, no resolution of singularities of the geometric generic fibre $X:=\mathcal X\times \overline K$ admits an integral decomposition of the diagonal.
\end{proposition}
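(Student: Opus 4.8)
The plan is to argue by contradiction, extending the specialization technique of Voisin \cite{voisin} and Colliot-Th\'el\`ene--Pirutka \cite{CT-Pirutka} so as to allow a singular generic fibre of $\pi$ and a resolution $\tau$ that is universally $\CH_0$-trivial only over an open set (we may assume $U\neq\emptyset$, since otherwise $\widetilde Y$ is itself one of the $E_i$ and $\alpha$ is already trivial). Suppose some resolution $\widetilde X$ of $X=\mathcal X\times\overline K$ admits an integral decomposition of the diagonal; as this is a stable birational invariant of smooth proper varieties, $\widetilde X$ may be chosen freely in its birational class. The resolution $\widetilde X$, a birational identification of $\widetilde X$ with $\mathcal X\times\overline K$, and the decomposition $\Delta_{\widetilde X}=\widetilde X\times z_0+Z$ (with $Z$ supported on $D\times\widetilde X$, $D\subsetneq\widetilde X$ closed, and $z_0$ of degree one) are all defined over a finite extension $L/K$. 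Replacing $R$ by the localisation $R'$ of its integral closure in $L$ at a maximal ideal lying over that of $R$ --- still a discrete valuation ring with residue field $k$, as $k$ is algebraically closed --- we obtain $\mathcal X':=\mathcal X\times_RR'$, flat and proper over $R'$ with geometrically integral fibres, special fibre $Y$, and generic fibre $X_L$ birational to a smooth proper $L$-variety $\widetilde X_L$ that inherits a decomposition of its diagonal. Pushing the latter forward along $\mu\times\mu$, where $\mu\colon\widetilde X_L\to X_L$ is proper birational, yields a decomposition $\Delta_{X_L}=X_L\times z_0''+Z''$ of the diagonal of the possibly singular $X_L$, with $z_0''$ of degree one and $Z''$ supported on $D''\times X_L$ for a proper closed $D''\subsetneq X_L$.

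Next we specialize to the special fibre, exactly as in \cite{voisin,CT-Pirutka}. The scheme $\mathcal X'\times_{R'}\mathcal X'$ is flat and proper over $R'$, and its special fibre is the Cartier divisor $Y\times_kY$, cut out by a uniformizer of $R'$ and hence regularly embedded with trivial normal bundle. Taking closures of $\Delta_{X_L}$, $X_L\times z_0''$ and $Z''$ inside $\mathcal X'\times_{R'}\mathcal X'$, the localisation sequence shows that their alternating sum is pushed forward from $\CH_\ast(Y\times_kY)$, which the refined Gysin map of that regular embedding annihilates. One obtains
$$
\Delta_Y=Y\times_kz_0+Z_Y\qquad\text{in }\CH_n(Y\times_kY),
$$
with $z_0$ a degree-one zero-cycle on $Y$ and $Z_Y$ supported on $D_Y\times_kY$ for a proper closed $D_Y\subsetneq Y$. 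By the standard Bloch--Srinivas argument this forces $\CH_0(Y_F)=\Z$ for every field extension $F/k$; in particular we may replace $z_0$ by a $k$-point of $Y$ lying in $U$ and over which $\tau$ is an isomorphism.

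The new ingredient is the transfer of this to $\widetilde Y$. Restricting the last identity to the generic point of the first projection gives $[\eta_Y]=z_0$ in $\CH_0(Y_{k(Y)})$, hence $[\eta_U]=z_0$ in $\CH_0(U_{k(U)})$. Since $\tau$ restricts to a universally $\CH_0$-trivial morphism $\widetilde U\to U$, where $\widetilde U:=\tau^{-1}(U)$, and $z_0$ lifts to a degree-one zero-cycle $\widetilde z_0$ on $\widetilde U$, the induced isomorphism $\tau_\ast\colon\CH_0(\widetilde U_{k(\widetilde Y)})\xrightarrow{\,\sim\,}\CH_0(U_{k(\widetilde Y)})$ sends $[\eta_{\widetilde U}]-\widetilde z_0$ to $[\eta_U]-z_0=0$, so $[\eta_{\widetilde U}]=\widetilde z_0$ in $\CH_0(\widetilde U_{k(\widetilde Y)})$; by the localisation sequence for $\widetilde Y\setminus\widetilde U=\bigcup_iE_i$ this means $[\eta_{\widetilde Y}]=\widetilde z_0$ in $\CH_0(\widetilde Y_{k(\widetilde Y)})$ modulo zero-cycles supported on the $E_i$. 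Spreading this relation out over a dense open of the first factor of $\widetilde Y\times_k\widetilde Y$ produces a \emph{weak decomposition of the diagonal}
$$
\Delta_{\widetilde Y}=\widetilde Y\times_k\widetilde z_0+Z'+\sum_i\Xi_i\qquad\text{in }\CH_n(\widetilde Y\times_k\widetilde Y),
$$
where $Z'$ is supported on $\widetilde D\times_k\widetilde Y$ for a proper closed $\widetilde D\subsetneq\widetilde Y$ and each $\Xi_i$ is supported on $\widetilde Y\times_kE_i$; crucially, the error terms sit in the \emph{second} factor.

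Finally, let this identity act on $\alpha\in M^i_{nr}(\widetilde Y)$ via the correspondence action of Section \ref{subsec:Rostcycle}. Then $Z'$ acts trivially, as it does not dominate the first factor; $\widetilde Y\times_k\widetilde z_0$ carries $\alpha$ into the image of $M^i(k)$, factoring through the zero-dimensional $\widetilde z_0$; and writing $\Xi_i=(\id\times\iota_{E_i})_\ast\Xi_i'$ with $\iota_{E_i}\colon E_i\hookrightarrow\widetilde Y$ the (smooth, proper) inclusion and $\Xi_i'\in\CH_n(\widetilde Y\times_kE_i)$, functoriality of the action gives $\Xi_i^\ast\alpha=(\Xi_i')^\ast(\alpha|_{E_i})$, which is trivial since $\alpha|_{E_i}$ is. Thus $\alpha=\Delta_{\widetilde Y}^\ast\alpha$ lies in the image of $M^i(k)$, contradicting the nontriviality of $\alpha$. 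The hard part is the transfer step: because $\tau$ is only universally $\CH_0$-trivial over $U$, no honest decomposition of $\Delta_{\widetilde Y}$ is available, and one must produce a weak one whose correction terms are supported on $\widetilde Y\times_kE_i$ rather than on $E_i\times_k\widetilde Y$, so that the correspondence action converts them into $(\Xi_i')^\ast(\alpha|_{E_i})$ and the hypothesis $\alpha|_{E_i}=0$ becomes usable; dealing with the singular generic fibre via the push-forward along $\mu\times\mu$ is a second, more routine point of care.
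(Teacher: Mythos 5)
Your argument is correct and follows essentially the same route as the paper's proof: push the decomposition down to the (possibly singular) generic fibre, specialize it to $Y$, transfer it to $\widetilde Y$ via the localization sequence and the universal $\CH_0$-triviality of $\widetilde U \to U$ to obtain a weak decomposition with error terms $\Xi_i$ supported on $\widetilde Y \times_k E_i$, and then let it act on $\alpha$. The only cosmetic differences are that you reduce to a finite extension $L/K$ and spell out the specialization via closures and a refined Gysin map where the paper replaces $R$ by its completion and cites Fulton's specialization homomorphism directly; and your appeal to a Bloch--Srinivas argument to move $z_0$ into $U$ is not actually needed (the paper simply lifts the zero-cycle $z$ anywhere to $\widetilde Y$, and the localization diagram still yields the weak decomposition), though the step is harmless and your justification can be made rigorous by spreading out the rational equivalence $[\eta_Y]=z_0$ over a dense open of the first factor.
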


The assumptions on the resolution $\tau$ in Proposition \ref{prop:degeneration} are weaker and easier to check than those in \cite[Theorem 2.1]{voisin} and \cite[Th\'eor\`eme 1.14]{CT-Pirutka}.
Roughly speaking, instead of a universally $\CH_0$-trivial resolution of $Y$, we ask for a resolution which is universally $\CH_0$-trivial only over some open subset $U\subset Y$ and such that $\alpha$ restricts to zero on the complement.
In this paper we will mostly use the special case where $\tau^{-1}(U)\cong U$ is an isomorphism and so $\CH_0$-triviality is automatic. 
The idea  is to replace the Chow theoretic condition on the resolution $\tau$ from \cite{CT-Pirutka} by a cohomological one ($\alpha |_{E_i}$ is trivial), which is typically much more accessible. 

\begin{proof}[Proof of Proposition \ref{prop:degeneration}]
It suffices to prove that there is an algebraically closed field $F$ which contains $K$ and such that some resolution of $X\times F$ does not admit an integral decomposition of the diagonal.
Up to replacing $R$ by its completion (which does not change the residue field), we may thus assume that $R$ is a complete discrete valuation ring. 
For a contradiction, we assume that some resolution of $X$ admits an integral decomposition of the diagonal.
Pushing forward to $X$, we obtain a decomposition
\begin{align} \label{eq:DeltaX}
\Delta_{X}=[X \times z_X]+ B_X,
\end{align}
where $z_X \in \CH_0( X)$ is a zero-cycle of degree one, and where $\supp( B_X)\subset S_X \times X$ for some proper closed subset $ S_X \subsetneq  X$.
Since $k=\overline k$, the specialization homomorphism on Chow groups \cite[Example 20.3.5]{fulton} gives a decomposition of the diagonal of $Y$:
\begin{align} \label{eq:DeltaY}
\Delta_{Y}=[Y\times z]+B_Y,
\end{align}
where $z$ is a zero-cycle of degree one on $Y$, and where $\supp(B_Y)\subset S_Y\times Y$ for some proper closed subset $S_Y\subsetneq Y$.

Let $\widetilde U:=\tau^{-1}(U)$ and $E:=\widetilde Y\setminus \widetilde U$.
By assumptions, $\widetilde U\longrightarrow U$ is universally $\CH_0$-trivial.
Hence, for any field extension $L$ of $k$, the localization exact sequence \cite[Proposition 1.8]{fulton} gives the following commutative diagram, with exact rows:
\begin{align*}
\xymatrix{
\CH_0(E\times L) \ar[r] &\CH_0(\widetilde {Y}\times L) \ar[d]^{\tau_\ast} \ar[r] & \CH_0(\widetilde U\times L) \ar[d]^{\cong} \ar[r] & 0\\
&\CH_0(Y \times L)  \ar[r] & \CH_0(U\times L) \ar[r] & 0} 
\end{align*}
We apply this to $L=k(Y)$ and think about $\widetilde Y \times L$ and $ Y \times L$ as generic fibres of the projections $\pr_1:\widetilde Y\times \widetilde Y\longrightarrow \widetilde Y$ and $\pr_1: Y\times Y\longrightarrow Y$ to the first factors, respectively. 
We claim that this gives rise to a decomposition
\begin{align} \label{eq:Delta=B+C}
\Delta_{\widetilde {Y}}=[\widetilde {Y}\times \tilde z]+B+C,
\end{align}
where $\tilde z\in \CH_0(\widetilde {Y})$ has degree one (and maps to $z$),  $\supp(C)\subset \widetilde {Y}\times E$ and $\supp(B)\subset S\times \widetilde {Y}$, for some proper closed subset $S\subsetneq \widetilde{Y}$.  
Indeed, since $k=\overline k$, we may choose a lift $\tilde z$ of $z$ and then the above diagram together with (\ref{eq:DeltaY}) shows that the image of $\Delta_{\widetilde {Y}}-[\widetilde {Y}\times \tilde z] $ in $\CH_0(\widetilde {Y}\times L)$ restricts to zero on $\widetilde U\times L$, where $L=k(Y)$.
This yields (\ref{eq:Delta=B+C}), as claimed.

Letting the correspondence (\ref{eq:Delta=B+C}) act by pull-back gives an action
$$
\Delta_{\widetilde {Y}}^{\ast}=[\widetilde {Y}\times \tilde z]^\ast+B^\ast+C^\ast:M^i_{nr}(\widetilde Y)\longrightarrow M^i_{nr}(\widetilde Y) ,
$$
which is the identity because $\Delta_{\widetilde {Y}}$ is the class of the diagonal.
As recalled in Section \ref{subsec:Rostcycle}, $B^\ast$ acts trivially because $B$ does not dominate the first factor.
Moreover, for each closed point $y\in \widetilde{Y}$, $[\widetilde {Y}\times  y]^\ast$ factors through $M^i_{nr}(y)=M^i(k)$ and the induced map $M^i(k)\longrightarrow M^i_{nr}(\widetilde {Y})$ is the natural one.
The image of $[\widetilde {Y}\times \tilde z]^\ast$ is therefore contained in the subgroup of trivial unramified elements $M^i(k)\subset M^i_{nr}(\widetilde {Y})$, where we use that  $M^i(k)\longrightarrow M^i_{nr}(\widetilde {Y})$ is injective because $\widetilde Y$ has a rational point and denote its image by $M^i(k)\subset M^i_{nr}(\widetilde {Y})$.
The above decomposition of the diagonal thus shows that, up to trivial unramified elements from $M^i(k)$, we have
$
\alpha=C^\ast (\alpha)
$.

We may write $C=\sum_i C_i$, where $\supp(C_i)\subset \widetilde {Y}\times E_i$, and where the $E_i$ denote the irreducible components of $E$. 
Since $E_i$ is smooth, $C_i^\ast:M^i_{nr}(\widetilde{Y})\longrightarrow M^i_{nr}(\widetilde{Y})$ 
factors through the restriction map $M^i_{nr}(\widetilde{Y})\longrightarrow M^i_{nr}(E_i)$. 
Our assumptions therefore imply $C_i^\ast ( \alpha) \in M^i(k)\subset M^i_{nr}(\widetilde {Y})$ for all $i$.
This implies $\alpha\in M^i(k) \subset M^i_{nr}(\widetilde {Y})$, which contradicts our assumption that $\alpha$ is nontrivial.
This finishes the proof of the proposition.
\end{proof}

\begin{remark}
The unramified cohomology group $M^i_{nr}$ in item (\ref{item:prop:alpha}) of Proposition \ref{prop:degeneration} can be replaced by any other birational invariant on which integral correspondences act similarly.  
For instance,  Proposition \ref{prop:degeneration} remains true if we replace condition (\ref{item:prop:alpha}) by the existence of a nontrivial differential form $\alpha \in H^0(\widetilde {Y},\Omega_{\widetilde {Y}}^i)$ for some $i\geq 1$, cf.\ \cite{totaro}. 
\end{remark}

\section{A vanishing result}
\label{sec:vanishing}

If the special fibre $Y$ in Proposition \ref{prop:degeneration} is birational to a quadric bundle over $\CP^n_\C$ whose generic fibre is a quadric of CTO type, then condition (\ref{item:prop:alpha}) of Proposition \ref{prop:degeneration} is satisfied by Proposition \ref{prop:CTO}.
In this section we establish a vanishing result which ensures that under some mild assumptions, also the second condition in Proposition \ref{prop:degeneration} is satisfied.

Recall that for any dominant rational map $f:Y\dashrightarrow S$, there is a generic fibre $Y_\eta$ over the function field of $S$, well-defined up to birational equivalence.
An explicit representative of $Y_\eta$ is given by the generic fibre of $f|_U :U\longrightarrow S$, where $U\subset Y$ is some open dense subset on which $f$ is defined.

\begin{proposition}\label{prop:alpha':CTO}
Let $Y$ be a normal complex projective variety and let $S$ be a normal complex projective rational $n$-fold for some $n\geq 2$.
Let $f:Y\dashrightarrow S$ be a dominant rational map whose generic fibre $Y_\eta$ is stably birational to a CTO type quadric $Q$ over $K=\C(S)$, defined by a neighbour of the Pfister form $\langle\langle a_1,\dots ,a_{n-1},b_1b_2\rangle\rangle$, for some $a_i,b_j\in K^\ast$. 
Set $\alpha_j:=(a_1,\dots ,a_{n-1},b_j)\in H^n(K,\mu_2^{\otimes n})$ and let $\alpha':=f^\ast \alpha_1\in H^n_{nr}(\C(Y)\slash \C,\mu_2^{\otimes n})$ be the unramified class from Proposition \ref{prop:CTO}.
Then the following holds:
\begin{enumerate}
\item[($\ast\ast$)] for any prime divisor $E\subset Y$ which does not dominate $S$, the restriction of $\alpha'$ to $E$ vanishes:  
$
\alpha'|_E=0\in H^n(\C(E),\mu_2^{\otimes n}). 
$ 
\end{enumerate} 
\end{proposition}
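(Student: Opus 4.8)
The plan is to analyze the residue of $\alpha'$ along an arbitrary prime divisor $E\subset Y$ not dominating $S$, by relating the valuation $\nu_E$ on $\C(Y)$ to a valuation on $K=\C(S)$ via the rational map $f$. First I would pass to a normal model on which $f$ is a morphism: choose a normal projective variety $Y'$ with a birational morphism $g:Y'\to Y$ and a morphism $f':Y'\to S$ extending $f$; since $\alpha'$ is unramified on $\C(Y)=\C(Y')$, and $\alpha'|_E$ can be computed on any model (the residue $\del^n_{\nu_E}$ only sees the valuation $\nu_E$, and the restriction of an unramified class to $E$ agrees with the residue along $\nu_E$ up to the identification of $\C(E)$ with the residue field), it suffices to show that for every prime divisor $E'\subset Y'$ not dominating $S$ the restriction $\alpha'|_{E'}$ vanishes. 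The condition that $E$ (equivalently $E'$, after suitable choice) does not dominate $S$ means $f'(E')$ is contained in a proper closed subset of $S$, so $E'$ maps into the closure of some $w=f'(\eta_{E'})\in S$ with $\operatorname{codim}(w)\geq 1$.

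Next I would distinguish two cases according to whether $\nu_{E'}$ restricts trivially to $K$ or not. Write $\mu:=\nu_{E'}|_K$. If $\mu$ is trivial, then $a_1,\dots,a_{n-1},b_1$ all lie in the valuation ring $\OO_{\nu_{E'}}$ and are in fact units or zero there; since $\alpha'=f'^\ast\alpha_1=(f'^\ast a_1,\dots,f'^\ast a_{n-1},f'^\ast b_1)$ is a symbol in units, Lemma \ref{lem:residue} (with $m=0$) gives $\del^n_{\nu_{E'}}\alpha'=0$, hence $\alpha'|_{E'}=0$. If $\mu$ is nontrivial, then $\mu\in\Val(K/\C)$ by \cite[Proposition 1.4]{merkurjev}, and Lemma \ref{lem:residue:compatible} applied to the dominant local map $\OO_{\mu}\to\OO_{\nu_{E'}}$ gives $\del^n_{\nu_{E'}}\alpha'=e\cdot f'^\ast(\del^n_\mu\alpha_j)$ for a suitable $e\in\Z$ and each $j=1,2$. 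By the defining property $(\ast)$ of a CTO type quadric, $\del^n_\mu\alpha_j=0$ for $j=1$ or $j=2$, whence $\del^n_{\nu_{E'}}\alpha'=0$ and $\alpha'|_{E'}=0$ as well.

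The main obstacle I anticipate is the reduction to a morphism and the precise identification of "$\alpha'|_E$" with a residue. One must check that (i) resolving $f$ to a morphism can be done in the normal category without destroying the hypothesis that $E$ fails to dominate $S$ — i.e. that the strict transform $E'$ of $E$ still does not dominate $S$, which is automatic since birational morphisms preserve images up to closure; and (ii) that for a prime divisor $E$ on a normal variety $Y$, the restriction map $H^n(\C(Y),\mu_2^{\otimes n})\supset H^n_{nr}\to H^n(\C(E),\mu_2^{\otimes n})$ used in condition $(\ast\ast)$ is exactly the one computed by the residue $\del^n_{\OO_{Y,E}}$ followed by the canonical identification of the residue field with $\C(E)$ — this is standard (the residue of an unramified class along a divisor recovers its restriction to that divisor, cf.\ the discussion in Section \ref{subsec:unramified} and \cite{CT}). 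A minor point is that one should only need divisors $E$ lying over a model where $f$ is defined; divisors created by resolving the indeterminacy of $f$ are exactly the ones handled by the case $\mu$ nontrivial (they are contracted to something of positive codimension in $S$), so the argument covers them automatically. Once these identifications are in place, the proof is the two-case residue computation above.
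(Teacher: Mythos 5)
There is a fundamental gap: you have conflated the residue map with the restriction map. The claim to be proven is that $\alpha'|_E = 0$ in $H^n(\C(E),\mu_2^{\otimes n})$, a group in degree $n$; what you compute throughout is $\del^n_{\nu_E}\alpha' \in H^{n-1}(\C(E),\mu_2^{\otimes(n-1)})$, which lives in degree $n-1$ and is a different map. Your parenthetical remark that ``the residue of an unramified class along a divisor recovers its restriction to that divisor'' is false. The vanishing of the residue $\del^n_{\nu_E}\alpha'$ is precisely the condition that $\alpha'$ is unramified along $E$, which is \emph{given} in the hypotheses (since $\alpha'\in H^n_{nr}(\C(Y)/\C,\mu_2^{\otimes n})$), and it is what makes the restriction $\alpha'|_E$ well-defined; it does not imply that this restriction vanishes. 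In the Gersten-type exact sequence the residue and the restriction are two distinct arrows, one lowering degree, the other preserving it. Thus your two-case residue computation only re-derives the unramifiedness of $\alpha'$ and says nothing about $\alpha'|_E$. (As a side remark, your ``trivial $\mu$'' case is vacuous here: a prime divisor that does not dominate $S$ must induce a nontrivial valuation on $K=\C(S)$, since some nonzero $a\in K$ vanishes at the image point.)

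The missing idea is the vanishing theorem for Galois cohomology of function fields. The actual proof proceeds as follows: since $Y$ is normal, $f$ is defined at the generic point $y$ of $E$, and by Lemma \ref{lem:merkurjev} one may, after replacing $S$ by a birational normal projective model, assume $x := f(y)$ has codimension one in $S$, giving an injection of discrete valuation rings $A=\mathcal O_{S,x}\hookrightarrow B=\mathcal O_{Y,y}$. The CTO property $(\ast)$ gives $\del^n_A\alpha_j=0$ for some $j$, so $\alpha_j$ lifts to $H^n_{\text{\'et}}(\Spec A,\mu_2^{\otimes n})$. Now the key point: restriction to the closed point lands in $H^n(\kappa(x),\mu_2^{\otimes n})$, and this group is \emph{zero} by Theorem \ref{thm:milne} because $\kappa(x)$ is the function field of a variety of dimension at most $n-1$ over $\C$. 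The commutative square of restriction maps for $A\hookrightarrow B$ then forces $\alpha'|_E = f^\ast(\alpha_j|_x) = f^\ast(0) = 0$. Without this cohomological dimension argument there is no way to conclude; nothing in your proposal plays its role.
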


We will use the following lemma, which reformulates \cite[Propositions 1.4 and 1.7]{merkurjev} in geometric terms.

\begin{lemma}\label{lem:merkurjev}
Let $f:Y\dashrightarrow S$ be a dominant rational map between normal complex projective varieties.
Let $y\in Y^{(1)}$ be a codimension one point.
Then there is a normal projective model $S'$ of $S$, 
such that the induced rational map $f':Y\dashrightarrow S'$ maps $y$ either to the generic point of $S'$ or to the generic point of a divisor on $S'$.
\end{lemma}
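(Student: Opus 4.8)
The plan is to translate the statement of \cite[Propositions 1.4 and 1.7]{merkurjev} about extensions of valuations into the language of birational models, using that a codimension one point $y\in Y^{(1)}$ determines a geometric discrete valuation $\nu_y$ of rank one on $L:=\C(Y)$. First I would fix such a $y$ and consider the restriction $\mu:=\nu_y|_{\C(S)}$ of the associated valuation to the subfield $\C(S)\subset L$, which is well-defined since $f$ is dominant. The key dichotomy (this is exactly the content of \cite[Proposition 1.4]{merkurjev}) is that either $\mu$ is trivial on $\C(S)$, or $\mu$ is again a geometric discrete valuation of rank one on $\C(S)/\C$.

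In the first case, I would argue that $\nu_y$ restricts to a valuation on the compositum of $\C(S)$ with nothing new, so the valuation ring $\mathcal O_{\nu_y}$ contains $\C(S)$; then for \emph{any} normal projective model $S'$ of $S$, the center of $\nu_y$ on $S'$ is the generic point of $S'$, and we are done — $y$ maps to the generic point of $S'$. In the second case, $\mu\in\Val(\C(S)/\C)$, and by the characterization of geometric valuations recalled in Section~\ref{subsec:Rostcycle} (see \cite[Proposition 1.7]{merkurjev}), there exists a normal variety $X$ over $\C$ with $\C(X)=\C(S)$ and a codimension one point $x\in X^{(1)}$ such that $\mathcal O_\mu=\mathcal O_{X,x}$. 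Taking a normal projective model $S'$ birational to $X$ (so $\C(S')=\C(S)$, identifying $S'$ with $S$ birationally, and replacing $X$ by a projective normal compactification if necessary), the center of $\mu$ on $S'$ is then a codimension one point, i.e. the generic point of a divisor on $S'$. Since the rational map $f':Y\dashrightarrow S'$ sends $y$ to the center of $\nu_y|_{\C(S')}=\mu$ on $S'$, this gives the desired conclusion.

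The one technical point requiring care is that the model $S'$ produced by \cite[Proposition 1.7]{merkurjev} is \emph{a priori} only a normal variety, not necessarily projective. Here the fix is routine: any normal variety admits a normal projective compactification (normalize a projective closure), and a codimension one point of a normal variety remains of codimension one in any such compactification, so projectivity can be arranged without disturbing the center of $\mu$. The main obstacle, then, is not conceptual but rather a matter of carefully invoking Merkurjev's two propositions with the right dictionary: \cite[Proposition 1.4]{merkurjev} gives the dichotomy (trivial restriction versus geometric valuation), and \cite[Proposition 1.7]{merkurjev} realizes a geometric valuation as the local ring at a codimension one point of a normal model. I would present the proof in two short paragraphs mirroring the two cases of this dichotomy.
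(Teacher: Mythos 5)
Your proposal is correct and follows essentially the same route as the paper: both split into the cases where the restricted valuation $\mu$ is trivial or not, invoke \cite[Proposition 1.4]{merkurjev} to conclude that a nontrivial restriction is again a geometric discrete valuation, and invoke \cite[Proposition 1.7]{merkurjev} to realize it as the local ring at a codimension-one point of a normal model. The only differences are cosmetic: the paper phrases the dichotomy as whether $f(y)$ is dense in $S$ (equivalent to whether $\mu$ is trivial), and you make explicit the routine passage from the normal variety produced by \cite[Proposition 1.7]{merkurjev} to a normal \emph{projective} compactification, a step the paper leaves implicit.
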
 

\begin{proof}
Since $Y$ is normal, $f$ is defined at $y$.
If $f(y)$ is dense in $S$, then $S'= S$ works. 
Otherwise, the valuation $\nu\in \Val(\C(Y)\slash \C)$ induced by $y\in Y^{(1)}$ restricts to a geometric discrete valuation $\mu$ of rank one on $f^\ast (\C(S))\subset \C(Y)$, see \cite[Proposition 1.4]{merkurjev}.
By \cite[Proposition 1.7]{merkurjev}, $\mathcal O_\mu=\mathcal O_{S',s'}$ for some normal projective variety $S'$, birational to $S$, and some codimension one point $s'\in (S')^{(1)}$.
The induced dominant rational map $f':Y\dashrightarrow S'$ sends $y$ to $s'$. 
This proves the lemma. 
\end{proof} 

\begin{proof}[Proof of Proposition \ref{prop:alpha':CTO}]
Let $y\in Y^{(1)}$ be the generic point of $E$.
Since $Y$ is normal, $f$ is defined at $y$.
By Lemma \ref{lem:merkurjev}, we may up to replacing $S$ by some different normal projective model assume that $x:=f(y)$ is a codimension one point on $S$.
Consider the discrete valuation rings $A:=\mathcal O_{S,x}$ and $B:= \mathcal O_{ Y,y}$ and note that $f$ induces an injection $A\hookrightarrow B$.
By the definition of CTO type quadrics, there is some $j\in \{1,2\}$ with $\del^n_A \alpha_j=0$.

The generic fibre of $f$ is stably birational to a CTO type quadric associated to a neighbour of the Pfister form $\langle\langle a_1,\dots ,a_{n-1},b_1b_2\rangle\rangle$.
Since unramified cohomology is a stable birational invariant \cite{CTO}, 
we conclude $f^\ast \alpha_1=f^\ast \alpha_2\in H^n_{nr}(\C(Y)\slash \C,\mu_2^{\otimes n})$ from Theorem \ref{thm:OVV}. 
It thus suffices to prove that $f^\ast \alpha_j$ restricts to zero on $E$, where $j$ is as above.
Since $\del^n_A \alpha_j=0$, 
$$
\alpha_j \in H^n_{\text{\'et}}(\Spec A,\mu_2^{\otimes n})\subset H^n(K,\mu_2^{\otimes n}) ,
$$
see \cite[\S 3.3 and \S 3.8]{CT}.
Functoriality of \'etale cohomology yields a commutative diagram
$$
\xymatrix{
H^n_{\text{\'et}}(\Spec A,\mu_2^{\otimes n}) \ar[r] \ar[d]^{f^\ast} & H^{n}(\kappa(x),\mu_2^{\otimes n}) \ar[d]^{f^\ast} \\
H^n_{\text{\'et}}(\Spec B,\mu_2^{\otimes n}) \ar[r]&H^{n}(\C(E),\mu_2^{\otimes n}) ,
}
$$
where the vertical arrows are induced by restriction to the corresponding closed points, respectively.
Since $H^{n}(\kappa(x),\mu_2^{\otimes n})=0$ by Theorem \ref{thm:milne}, $\alpha'|_E=0$ follows from the commutativity of the above diagram.
This finishes the proof of the proposition. 
\end{proof}

\section{Existence of CTO type quadrics in arbitrary dimensions}
In this section, we aim to prove that CTO type quadrics (see Section \ref{subsec:CTO}) exist 
over $\C(\CP^n)$ for arbitrary $n\geq 2$. 

\subsection{Construction of quadrics over $\C(\CP^n)$ via arrangements of quadrics in $\CP_\C^n$}
We choose coordinates $x_0,\dots ,x_n$ on $\CP^n_\C$.
For $i=1,\dots , n-1$ we consider homogeneous polynomials $h_i\in \C[x_0,\dots ,x_{n}]$ of degree two and define
\begin{align} \label{def:ai}
a_i:=\frac{h_i}{x_0^2}
\end{align}
for $i=1,\dots ,n-1$.
In order to obtain a candidate CTO type quadric, we need to define two more rational functions $b_1$ and $b_2$, which we  will do next.

Choose two homogeneous polynomials $g_{10},g_{20}\in \C[x_0,\dots ,x_{n}]$ of degree two.
For any  $\epsilon=(\epsilon_1,\dots ,\epsilon_{n-1})\in I:=\{0,1\}^{n-1}$ and any $j=1,2$, we then consider 
$$
g_{j \epsilon}:=g_{j0}+\sum_{i=1}^{n-1} \epsilon_i h_i .
$$
With this definition, we put
\begin{align} \label{def:gi}
g_1:=\prod_{\epsilon \in I} g_{1 \epsilon} \ \ \text{and}\ \ g_2:=\prod_{\epsilon\in I}g_{2 \epsilon} .
\end{align}
Finally, let $N:=2^{n}$ and define
\begin{align} \label{def:bj}
b_1:=\frac{g_1}{x_0^{N}}\ \ \text{and}\ \ b_2:=\frac{g_2}{x_0^{N}}.
\end{align}

\subsubsection{Assumptions}
In the above construction, we will always assume that the homogeneous degree two polynomials $h_i$ and $g_{j0}$ satisfy the following assumptions.

For any $1\leq i_{1}<\dots <i_c \leq n-1$ with $c\geq 0$, the following holds for  $j=1,2$:
\begin{align}  \label{eq:hi=g1=0}
\codim_{\CP_\C^n} (\{h_{i_1} &=\dots =h_{i_c}=g_j=0\}) \geq c+1 , \\
\codim_{\CP_\C^n} (\{h_{i_1} &=\dots =h_{i_c}=g_1=g_2=0\}) \geq c+2 . \label{eq:hi=g1=g2=0} 
\end{align}
Moreover, we will assume that for $j=1,2$, the following symbol is nonzero
\begin{align}
0\neq (a_1,\dots ,a_{n-1},b_j)\in H^{n}(\C(\CP^n),\mu_2^{\otimes n}). \label{item:ai-symbol}
\end{align}

\subsubsection{Existence} \label{subsec:existence}
%
Let $l_1,\dots ,l_{2n+2}\in \C[x_0,\dots ,x_{n}]$ be linear homogeneous polynomials which are general subject to the condition\footnote{Condition (\ref{eq:li}) is not essential; it will only be used later in the proof of density of the rational fibres in the family of Theorem \ref{thm:defo}.} that 
\begin{align} \label{eq:li}
l_{1},l_{2},l_3,l_4 \in \C[x_0,x_1,x_2] .
\end{align} 
We put 
\begin{align}\label{def:hi,gj}
h_i:=l_{2i-1}l_{2i}\ \ \text{and}\ \ g_{j0}:=l_{2n-3+2j}l_{2n-2+2j}.
\end{align}
Conditions (\ref{eq:hi=g1=0}) and (\ref{eq:hi=g1=g2=0}) are then clearly satisfied; in fact, (\ref{eq:hi=g1=0}) and (\ref{eq:hi=g1=g2=0}) follow from 
$$
\{h_1=\dots =h_{n-1}=g_1=g_2\}=\emptyset ,
$$
which holds by our genericity assumption on the $l_i$.
To see that also (\ref{item:ai-symbol}) holds, it suffices by symmetry to deal with the case $j=1$.
To prove our claim, we take successive residues of $(a_1,\dots ,a_{n-1},b_1)$ along $\{l_{2i}=0\}$ for $i=n,n-1,\dots ,2,1$.
Using Lemma \ref{lem:residue} and our genericity assumptions on the $l_i$'s, we end up with the nontrivial element of $H^0(\Spec \C, \mu_2)$.
This proves (\ref{item:ai-symbol}).

We have thus proven that the choice of $h_i$ and $g_{j0}$ as in (\ref{def:hi,gj}) and the resulting $a_i$ and $b_j$ given by (\ref{def:ai}) and (\ref{def:bj}), satisfy all our assumptions (\ref{eq:hi=g1=0}), (\ref{eq:hi=g1=g2=0}) and (\ref{item:ai-symbol}).

\subsubsection{Key Property} 
Besides (\ref{eq:hi=g1=0})--(\ref{item:ai-symbol}), the most important property of this construction is as follows. 
Let $g_1$ and $g_2$ be as in (\ref{def:gi}), then, for any $i=1,\dots ,n-1$, 
\begin{align} \label{eq:gi=square}
\text{ the image of $g_1$ and $g_2$ in $\C[x_0,\dots ,x_{n}]\slash (h_i)$ becomes a square.}
\end{align}

\subsubsection{Remarks}
The above construction is inspired by  \cite[Exemple 2.4 and 3.3]{CTO}, where examples of CTO type quadrics for $n=2,3$ are given.
While that construction yields as degeneration divisor a special configuration of hyperplanes (cf.\ \cite[p.\ 150, Fig.\ 2]{CTO}), our construction relies on a configuration of pairs of hyperplanes given by $h_i=0$ and quadrics given by $g_{j\epsilon}=0$.  
Already for $n=3$, our  construction yields smaller bounds on the total degree of the degeneration divisor. 
This is important  in view of applications such as Theorem \ref{thm:type} and Corollary \ref{cor:hyper}, stated in the introduction, where small bounds on the degrees are desirable.

\subsection{Proof of existence -- a key result} 
In this section we prove that the above construction yields quadrics of CTO type. 
To this end, we do not follow the original approach of Colliot-Th\'el\`ene and Ojanguren. 
In fact, we do not try to generalize \cite[Compl\'ement 3.2]{CTO}, because we were unable to see how to split the argument according to the dimension of the center $x\in \CP^n_\C$ of the valuation $\nu\in \Val(\C(\CP^n)\slash \C)$ for arbitrary $n$; that strategy had however been used in all previous geometric constructions of  quadrics with nontrivial unramified cohomology we are aware of, cf.\ \cite{CTO,Pirutka,HPT}. 

\begin{proposition} \label{prop:examplesofCTO-type}
Let $n\geq 2$ and let $a_i,b_j\in K=\C(\CP^n)$ be as in (\ref{def:ai}) and (\ref{def:bj}).
Suppose that the assumptions (\ref{eq:hi=g1=0}), (\ref{eq:hi=g1=g2=0}) and (\ref{item:ai-symbol}) hold.
Then any Pfister neighbour of $\langle\langle a_1,\dots ,a_{n-1},b_1b_2 \rangle\rangle $ defines a CTO type quadric over $K$.  
\end{proposition}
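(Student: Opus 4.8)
The goal is to verify the condition $(\ast)$ of Definition \ref{def:CTOtype}: for every geometric rank-one valuation $\nu\in\Val(K/\C)$, at least one of $\partial_\nu^n\alpha_1$, $\partial_\nu^n\alpha_2$ vanishes. The class $\alpha_j=(a_1,\dots,a_{n-1},b_j)$ with $a_i=h_i/x_0^2$ and $b_j=g_j/x_0^N$. The plan is to fix such a $\nu$, write $\nu(x_0)=m$, $\nu(h_i)=s_i$, $\nu(g_j)=t_j$, and reduce everything to a computation with Lemma \ref{lem:residue} after clearing denominators (note $-1$ is a square in $K=\C(\CP^n)$, so signs can be dropped and Lemma \ref{lem:residue} applies). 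Since multiplying a slot by a square does not change the symbol, I would replace $a_i$ by a representative that is a unit times a product of uniformizer-powers, and similarly for $b_j$; the residue $\partial_\nu^n\alpha_j$ is then a concrete sum of lower symbols in the residue field $\kappa(\nu)$ built from the reductions of the unit parts.

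The main structural input is the Key Property \eqref{eq:gi=square}: modulo each $h_i$, both $g_1$ and $g_2$ become squares. I expect the argument to split according to how $\nu$ behaves on the $h_i$. First case: $\nu(h_i)$ is even for all $i$ (equivalently, after dividing by squares, each $a_i$ is a unit at $\nu$, up to absorbing powers of $x_0$). Then $\partial_\nu^n\alpha_j$ only picks up a contribution if $\nu(b_j)$ is odd, and in that case the residue is a symbol of the form $(\bar a_1,\dots,\bar a_{n-1})$ in $\kappa(\nu)$ (times a sign/multiplicity), \emph{independent of $j$}. If this class is zero we are done for both $j$; if it is nonzero, then — and here is where I would need to work — I claim the reduced $\bar a_i$ must in fact be "generic enough" that the $h_i$ reductions are nonzero, and then using that $g_1,g_2$ reduce to squares mod $h_i$ together with the codimension assumptions \eqref{eq:hi=g1=0}, \eqref{eq:hi=g1=g2=0}, one shows $\nu(b_1)$ and $\nu(b_2)$ cannot both be odd, so one of the residues vanishes. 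Second case: some $\nu(h_i)$ is odd. Then that slot contributes a "uniformizer part", and I would use Lemma \ref{lem:residue} with $m\geq 1$: the residue becomes a sum over the odd slots; crucially, whenever the index set of odd slots among $\{h_1,\dots,h_{n-1}\}$ is nonempty, I can pick out a term where one of the remaining $\bar h_i$-reductions appears, and since $g_1\equiv\square\pmod{h_i}$, the reduction of $g_j$ in $\kappa(\nu)$ lands in the image of $\kappa(\nu)^{\ast 2}$ along the relevant specialization, killing the $b_j$-slot — so $\partial_\nu^n\alpha_j=0$ for \emph{both} $j$.

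Making the second-case reduction precise is what I expect to be the real obstacle. The subtlety is that $\nu$ is an arbitrary geometric valuation, not necessarily divisorial on $\CP^n$ itself, so "reduction mod $h_i$" has to be interpreted as: on a normal model $X$ with $\C(X)=K$ and center a codimension-one point $x$, the local ring $\OO_{X,x}$, and the statement \eqref{eq:gi=square} that $g_j$ is a square mod $(h_i)$ in $\C[x_0,\dots,x_n]$ has to be transported to say something about $g_j$ modulo a uniformizer $\pi$ of $\OO_{X,x}$ when $\pi \mid h_i$. The right move is: if $\nu(h_i)$ is odd then in particular $\nu(h_i)>0$, so $h_i\in\mathfrak m_\nu$; writing $h_i=\pi^{s_i}u_i$ with $u_i$ a unit, \eqref{eq:gi=square} gives $g_j = h_i\cdot(\text{poly}) + (\text{poly})^2$, and I must extract from this that $\overline{g_j}\in\kappa(\nu)^{\ast2}$ whenever $\overline{g_j}\neq 0$, which requires controlling $\nu(g_j)$ versus $\nu(h_i)$ via the codimension hypotheses \eqref{eq:hi=g1=0}. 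Once that local algebra lemma is in hand, the residue computation via Lemma \ref{lem:residue} and Lemma \ref{lem:residue:compatible} is bookkeeping. Finally, nontriviality of the resulting CTO type quadric — i.e.\ that $\alpha_1\neq 0$ and $\alpha_1\neq\alpha_1+\alpha_2$, equivalently $\alpha_j\neq0$ — is exactly the assumption \eqref{item:ai-symbol}, so no extra work is needed there; the conclusion that \emph{any} Pfister neighbour of $\langle\langle a_1,\dots,a_{n-1},b_1b_2\rangle\rangle$ gives a CTO type quadric is then immediate from Definition \ref{def:CTOtype}.
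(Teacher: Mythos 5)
Your overall framing (fix $\nu$, use Lemma~\ref{lem:residue}, observe that the final ``any Pfister neighbour works'' is automatic from Definition~\ref{def:CTOtype} together with assumption~(\ref{item:ai-symbol})) is correct, but the case split you propose --- by \emph{parity} of $\nu(h_i)$ --- is not what the paper does and, as you already sense in your last paragraph, cannot be pushed through as is. The paper normalizes $x_0$ to a unit at $\nu$, lets $x\in\CP^n$ be the \emph{center} of $\nu$, and splits instead on the \emph{vanishing of $h_i$ and $g_j$ at $x$}; this is the right invariant because it is what the codimension hypotheses (\ref{eq:hi=g1=0})--(\ref{eq:hi=g1=g2=0}) control.

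Two concrete gaps. First, in your ``all $\nu(h_i)$ even'' case you try to prove that $\nu(b_1)$ and $\nu(b_2)$ cannot both be odd. That is simply false in general: at the generic point $x$ of a codimension-two component of $\{g_{10}=g_{20}=0\}$ with all $h_i(x)\neq 0$, both $\nu(b_j)$ are typically odd. The residue nonetheless vanishes, but for a completely different reason: the unit part of the residue symbol, namely $(\overline a_1,\dots,\overline a_{n-1})$, is the pullback of a class from $H^{n-1}(\kappa(x),\mu_2^{\otimes(n-1)})$, and this group is zero by Theorem~\ref{thm:milne} because $\kappa(x)$ has transcendence degree $\leq n-2$. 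You never invoke Theorem~\ref{thm:milne}; it is the workhorse in the paper's Cases 2 and 3, where an integer $c$ counts the entries vanishing at $x$, the codimension hypotheses give $\codim(x)\geq c+1$, and the unit-part symbol of length $n-c$ dies in $H^{n-c}(\kappa(x))$. Second, in your ``some $\nu(h_i)$ odd'' case you want $\overline{g_j}$ to land in $\kappa(\nu)^{\ast 2}$, but as you note this requires $\nu(g_j)=0$, i.e.\ $g_j(x)\neq 0$. The paper only runs the ``$g_1$ becomes a square'' argument under exactly this hypothesis ($g_1(x)\neq 0$, some $h_i(x)=0$), and then upgrades it via Hensel's lemma to get that $g_1$ is a square in the completion $\widehat K=\Frac(\widehat{\mathcal O_{S,s}})$, so that $\alpha_1$ is zero already in $H^n(\widehat K,\mu_2^{\otimes n})$ (not merely its residue), using the comparison diagram (\ref{diag:residue}). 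When $g_1(x)=0$ one is back to the Theorem~\ref{thm:milne} argument, not the square argument. So the missing ideas are: (i) center the case analysis at $x$ on $\CP^n$ after arranging $x_0$ to be a unit, (ii) Hensel's lemma in the completion when $g_1(x)\neq 0$, and (iii) the cohomological-dimension vanishing $H^{n-c}(\kappa(x))=0$ forced by the codimension assumptions when $g_1(x)=0$.
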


\begin{proof}
By (\ref{item:ai-symbol}), the class $\alpha_j:=(a_1,\dots ,a_{n-1},b_j)\in H^n(K,\mu_2^{\otimes n})$ is nonzero.
It thus suffices to prove that for each $\nu\in \Val(K\slash \C)$, $\del_\nu^n\alpha_j=0$ for $j=1$ or $2$.

To prove this, let $\nu\in \Val(K\slash \C)$.
We can choose a normal complex projective variety $S$ together with a proper birational morphism $f:S\longrightarrow \CP^n_\C$, such that $\nu$ corresponds to a codimension one point $s\in S^{(1)}$.
Let $x:=f(s)\in \CP^n_\C$ be its image on $\CP^n_\C$.

By construction, $x$ is a point of codimension at least one on $\CP^n_\C$.
Hence, there is some $i$ with $x_i(x)\neq 0$. 
Multiplying the first $n-1$ entries of $\alpha_j$ by $x_0^2/x_i^2$ and the last entry by $x_0^{2^n}/x_i^{2^n}$ (which does not change the cohomology class $\alpha_j$), we see that we may without loss of generality assume $x_0(x)\neq 0$.
In particular, $a_i$ and $b_j$ are regular functions locally at $x$ and we may from now on work on an affine open subset where $x_0\neq 0$. 

We consider the completion $\widehat {\mathcal O_{S,s}}$ and let $\widehat K:=\Frac(\widehat {\mathcal O_{S,s}})$ be its field of fractions. 
By  Lemma \ref{lem:residue:compatible}, the residue $\del_\nu^n=\del_{\mathcal O_{S,s}}^n$ fits into a commutative diagram
\begin{align} \label{diag:residue}
\xymatrix{
H^n(\widehat K,\mu_2^{\otimes n})\ar[r]^-{\del_{\widehat {\mathcal O_{S,s}}}^n} &H^{n-1}(\kappa(s),\mu_2^{\otimes (n-1)}) \\
H^n(K,\mu_2^{\otimes n}) \ar[r]^-{\del_\nu^n} \ar[u] & H^{n-1}(\kappa(s),\mu_2^{\otimes (n-1)}) \ar[u]_{\id} .
}
\end{align}   

To prove the proposition, we need to show that $\del_\nu^n\alpha_j=0$ for $j=1$ or $2$.
We divide the argument into three cases. 

\textbf{Case 1.} $g_1(x)\neq 0$.

If $h_i(x)\neq 0$ for all $i$, then $\del_\nu^n \alpha_1=0$ by Lemma \ref{lem:residue}.
On the other hand, if at least one $h_i$ vanishes at $x$, then (\ref{eq:gi=square}) implies that $g_1$ becomes a nontrivial square in the residue field $\kappa(s)$.
By Hensel's lemma, $g_1$ becomes a square in the completion $\widehat K$, and so $\del_\nu^n\alpha_1=0$ by the commutative diagram (\ref{diag:residue}).
This concludes Case 1.

\textbf{Case 2.} $g_1(x)=0$ and $h_i(x)\neq 0$ for all $i=1,\dots ,{n-1}$.

In this case, we consider $\alpha_2$.
If $g_2(x)\neq 0$, then $\del_\nu^n \alpha_2=0 $ by Lemma \ref{lem:residue}.
On the other hand, since $g_1(x)=0$ by assumptions, $g_2(x) = 0$ implies by (\ref{eq:hi=g1=g2=0}) that $x$ has codimension at least two in $\CP^n_\C$.
Moreover, $\del_\nu^n\alpha_2$  is a multiple of $(a_1,\dots ,a_{n-1})$ by Lemma \ref{lem:residue}, where by slight abuse of notation we do not distinguish between $a_i$ and its image in $\kappa(s)$.
But this shows that the residue $\del_\nu^n \alpha_2$ is a pullback of a class from $H^{n-1}(\kappa(x),\mu_2^{\otimes (n-1)})$ and so it must vanish by Theorem \ref{thm:milne} because $x$ has dimension at most $n-2$.

\textbf{Case 3.} $g_1(x)=0$ and $h_i(x)= 0$ for some $i=1,\dots ,{n-1}$.

Consider $\alpha_2=(a_1,\dots ,a_{n-1},b_2)$ and suppose that exactly $c$ entries of $\alpha_2$ vanish at $x$.
Since $g_1(x)=0$, (\ref{eq:hi=g1=0}) and (\ref{eq:hi=g1=g2=0}) imply that $x$ has codimension at least $c+1$.
By assumptions $c\geq 1$ and so Lemma \ref{lem:residue} shows that we can write
$$
\del_\nu^n\alpha_2=\beta\cup \gamma ,
$$
where $\beta \in H^{c-1}(\kappa(s),\mu_2^{\otimes (c-1)})$ and $\gamma$ is a symbol of degree $n-c$ which is given by (the images of) all entries of $\alpha_2$ which do not vanish at $x$.
In particular, $\gamma$ comes from a class of $H^{n-c}(\kappa(x),\mu_2^{\otimes (n-c)})$ and so it vanishes because $x$ is a point of dimension at most $n-c-1$. 
This concludes Case 3. 

Cases 1, 2 and 3 above finish the proof of the proposition.
\end{proof}

\begin{remark}
The above proof did not use that  the $h_i$ have degree two.
In fact, we can start with any collection of homogeneous polynomials $h_i,g_{j0}\in \C[x_0,\dots ,x_n]$ of the same even degree $2m$.
We may then define $g_1$ and $g_2$ as in (\ref{def:gi}) and put $a_i=h_i/x_0^{2m}$ and $b_j=g_j/x_0^{(2m)^n}$.
The proof of Proposition \ref{prop:examplesofCTO-type} shows then that the $a_i$ and $b_j$ define CTO type quadrics as soon as the assumptions (\ref{eq:hi=g1=0}), (\ref{eq:hi=g1=g2=0}) and (\ref{item:ai-symbol}) hold.
\end{remark}

\subsection{Quadric bundles of CTO type and some estimates}
Here we construct and analyse some quadric bundles whose generic fibres are quadrics of CTO type.

We will need a suitable bijection between $ \{0,1\}^{n}$ and $\{0,1,\dots ,2^n-1\}$. 
We start with $I=\{0,1\}^{n-1}$ and define the length of an element $\epsilon\in I$ by $|\epsilon|=\sum\epsilon_i$.  
We then choose any bijection $\phi':I \stackrel{\sim}\longrightarrow \{0,\dots ,2^{n-1}-1\}$ with $\phi'(\epsilon)\leq \phi'(\epsilon')$ if $|\epsilon|\leq |\epsilon'|$.
With this in mind, we define 
$$
\phi: \{0,1\}^{n}=I\times \{0,1\}\stackrel{\sim}\longrightarrow \{0,1,\dots ,2^n-1\},\ \ (\epsilon,\epsilon_n)\mapsto  \phi'(\epsilon)+\epsilon_n2^{n-1}  .
$$ 

\begin{definition} \label{def:cici'etc}
Let $n\geq 2$, 
and let $l_1,\dots ,l_{2n+2}\in \C[x_0,x_1,\dots ,x_n]$ be linear homogeneous polynomials as in \ref{subsec:existence}.
Equations (\ref{def:gi}) and (\ref{def:hi,gj}) then give two homogeneous polynomials $g_1$ and $g_2$ of degree $2^n$.  
For  $\epsilon \in \{0,1\}^n$, let
$$
c_\epsilon:=    \left( \prod_{i=1}^{n-1} (l_{2i-1}l_{2i})^{\epsilon_i} \right)  (g_1g_2) ^{\epsilon_{n}} .
$$
Let $\phi: \{0,1\}^n \longrightarrow \{0,\dots ,2^n-1\}$ be the bijection from above.
Then we define the following homogeneous polynomials for $i=0,\dots ,2^{n}-1$:
\begin{enumerate}
\item $c_i:=c_{\phi^{-1}(i)}$; \label{item:def:ci:1}
\item $c_i':=l_1c_i$ if $l_1$ does not divide $c_i$ and $c_i':=l_1^{-1}c_i$ otherwise;   \label{item:def:ci:2} 
\end{enumerate} 
Moreover, we denote the degrees of the above  homogeneous polynomials by $m_i:=|c_i|$ and $m_i':=|c_i'|$, respectively.
\end{definition}

For later use, we will assume that the bijection $\phi'$ from above is chosen in such a way that the following holds for $n\geq 3$:
\begin{align}\label{eq:phi}
c_1=l_1l_2,\ \ c_2=l_3l_4 \ \ \text{and}\ \ c_{n}=l_1l_2l_3l_4 .
\end{align}

In the next definition, we consider the polynomials $\tilde c_i$ that are obtained by starting with $l_1l_3\cdots l_{2n-3}g_1 c_i$ and absorbing all squares which arise.
The homogeneous polynomials $\tilde c_i$ obtained this way are balanced, in the sense that $|\tilde c_i|=2^n+n-1$ for all $i$.
The formal definition is as follows.

\begin{definition} \label{def:citilde}
In the notation of Definition \ref{def:cici'etc}, and for  $\epsilon \in \{0,1\}^n$, let
$$
\tilde c_\epsilon:=    \left( \prod_{i=1}^{n-1} l_{2i-1}^{1-\epsilon_i}l_{2i}^{\epsilon_i} \right)  g_1^{1-\epsilon_n}g_2 ^{\epsilon_{n}} .
$$
Let $\phi: \{0,1\}^n \longrightarrow \{0,\dots ,2^n-1\}$ be the bijection from above.
Then we define the following homogeneous polynomials for $i=0,\dots ,2^{n}-1$:
\begin{enumerate} 
\item $\tilde c_i:=\tilde c_{\phi^{-1}(i)}$;  \label{item:def:ci:3}
\item $\tilde c_i':=l_1\tilde c_i$ if $l_1$ does not divide $\tilde c_i$ and $\tilde c_i':=l_1^{-1}\tilde c_i$ otherwise.  \label{item:def:ci:4}
\end{enumerate} 
Moreover, we denote the degrees of the above homogeneous polynomials by $\tilde m_i:=|\tilde c_i|$ and $\tilde m_i':= |\tilde c_i'|$, respectively.
\end{definition}

With the above definitions, we have the following corollary of Proposition \ref{prop:examplesofCTO-type}.

\begin{corollary} \label{cor:CTO-bundles:1}
Let $n\geq 2$ and $r$ be integers with $2^{n-1}-1\leq r\leq 2^n-2$. 
In the notation of Definitions \ref{def:cici'etc} and \ref{def:citilde}, the following quadratic forms
\begin{align*}
\langle c_0,\dots ,c_{r+1}\rangle, \ \ 
\langle c'_0,\dots ,c'_{r+1}\rangle,\ \ \langle \tilde c_0,\dots ,\tilde c_{r+1}\rangle \  \ \text{and}\ \ \langle \tilde c'_0,\dots ,\tilde c'_{r+1}\rangle ,
\end{align*}
 define hypersurfaces $Y\subset \CP(\mathcal E)$, where $\mathcal E=\bigoplus_{i=0}^{r+1}\mathcal O_{\CP^n}(-k_i)$ with 
 $$
 k_i=\lfloor m_i/2\rfloor ,\ \ k_i=\lfloor m_i'/2 \rfloor,\ \ k_i=\lfloor \tilde m_i/2 \rfloor\  \ \text{and}\ \ k_i=\lfloor \tilde m_i'/2 \rfloor ,
 $$ 
 respectively, such that the generic fibre of $Y\longrightarrow \CP^n_\C$ is a quadric of CTO type.
 Moreover, the hypersurface $Y$ associated to $\langle c_0,\dots ,c_{r+1}\rangle$ is flat over $\CP^n_\C$, i.e.\ it is a quadric bundle. 
\end{corollary}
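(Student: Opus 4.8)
The plan is to deduce the statement from Proposition~\ref{prop:examplesofCTO-type} by showing that, modulo squares in $K^\ast=\C(\CP^n)^\ast$, each of the four diagonal forms is (similar to) a Pfister neighbour of the $n$-fold Pfister form $P:=\langle\langle a_1,\dots ,a_{n-1},b_1b_2\rangle\rangle$ built from the polynomials of Section~\ref{subsec:existence}, where $a_i=h_i/x_0^2$ and $b_j=g_j/x_0^{2^n}$ are as in (\ref{def:ai}) and (\ref{def:bj}).

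First I would record degrees. One computes directly that $|c_\epsilon|=2\sum_{i=1}^{n-1}\epsilon_i+2^{n+1}\epsilon_n$ is even, hence all $m_i$ are even; that $|\tilde c_\epsilon|=(n-1)+2^n$, hence all $\tilde m_i$ are equal; and that $m_i'=m_i\pm1$ and $\tilde m_i'=\tilde m_i\pm1$, so the $m_i'$ (resp.\ $\tilde m_i'$) all have the same parity. In each of the four cases the common parity of the relevant degrees fixes the integer $l\in\{0,1\}$ with $m_i=2k_i+l$ and $k_i=\lfloor m_i/2\rfloor$ (and likewise with $m_i'$, $\tilde m_i$, $\tilde m_i'$), so that the diagonal form lies in $H^0(\CP^n,\Sym^2\mathcal E^\vee\otimes\mathcal O_{\CP^n}(l))$ for $\mathcal E=\bigoplus_{i=0}^{r+1}\mathcal O_{\CP^n}(-k_i)$ and cuts out a hypersurface $Y\subset\CP(\mathcal E)$; its generic fibre over $\CP^n$ is the projective quadric over $K$ defined by the corresponding diagonal quadratic form over $K$.

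The core of the proof is a square-class computation. Because $-1\in(K^\ast)^2$, the form $P$ is isometric to $\bigotimes_{i=1}^{n-1}\langle1,a_i\rangle\otimes\langle1,b_1b_2\rangle$, whose diagonalisation carries, as $\epsilon$ runs over $\{0,1\}^n$, the entries $\prod_{i=1}^{n-1}a_i^{\epsilon_i}(b_1b_2)^{\epsilon_n}$. Using $h_i\equiv a_i$ and $g_1g_2\equiv b_1b_2$ in $K^\ast/(K^\ast)^2$ one gets $c_\epsilon\equiv\prod_{i=1}^{n-1}a_i^{\epsilon_i}(b_1b_2)^{\epsilon_n}$, so through the bijection $\phi$ the tuple $(c_0,\dots ,c_{2^n-1})$ represents exactly the diagonal entries of $P$ and hence $\langle c_0,\dots ,c_{2^n-1}\rangle\cong P$ over $K$. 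Two further routine computations (absorbing $l_1^{-1}\equiv l_1$, and so on) give $c_\epsilon'\equiv l_1c_\epsilon$, $\tilde c_\epsilon\equiv\big(\prod_{i=1}^{n-1}l_{2i-1}\big)g_1\cdot c_\epsilon$ and $\tilde c_\epsilon'\equiv l_1\big(\prod_{i=1}^{n-1}l_{2i-1}\big)g_1\cdot c_\epsilon$ in $K^\ast/(K^\ast)^2$. Therefore each of the four forms in the statement is, up to a similarity factor, the subform $\langle c_0,\dots ,c_{r+1}\rangle$ of $P$, of dimension $r+2$; it is non-degenerate because every entry is a nonzero polynomial, and it is a Pfister neighbour of $P$ because $2(r+2)>2^n=\dim P$ by the hypothesis $2^{n-1}\leq r+1$, while $r+1\leq 2^n-1$ by $r+1<2^n$ — and scaling a Pfister neighbour by a unit keeps it a Pfister neighbour. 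Since the hypotheses (\ref{eq:hi=g1=0}), (\ref{eq:hi=g1=g2=0}) and (\ref{item:ai-symbol}) of Proposition~\ref{prop:examplesofCTO-type} hold for the choices made in Section~\ref{subsec:existence}, that proposition shows each of the four forms defines a CTO type quadric over $K$, which, as noted, is the generic fibre of the corresponding $Y\longrightarrow\CP^n$.

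Finally, for flatness of the hypersurface $Y$ associated to $\langle c_0,\dots ,c_{r+1}\rangle$, note that $c_0=c_{(0,\dots ,0)}=1$ by the definition of $\phi$; hence the restriction of the form to any fibre of $Y\longrightarrow\CP^n$ is nonzero (its first diagonal entry is the constant $1$), all fibres are quadric hypersurfaces in $\CP^{r+1}$ with the same Hilbert polynomial, and $Y\longrightarrow\CP^n$ is flat, cf.\ Section~\ref{subsec:quadricbundles}. The only genuinely delicate part of the argument is the bookkeeping — matching the ordering given by $\phi$ with the diagonalisation of $P$ and keeping track of parities and square classes; there is no substantive obstacle beyond this.
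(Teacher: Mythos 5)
Your proof is correct and takes essentially the same route as the paper: you identify each of the four diagonal forms, up to an overall similarity factor and modulo squares in $K^\ast$, with the subform of the Pfister form $\langle\langle a_1,\dots,a_{n-1},b_1b_2\rangle\rangle$ spanned by the first $r+2$ diagonal entries, note that $2^{n-1}<r+2\leq 2^n$ makes this a Pfister neighbour, and invoke Proposition \ref{prop:examplesofCTO-type} — the paper compresses your careful square-class bookkeeping into the single phrase ``by construction.'' The one small omission is the observation that the entries of each form are nonzero and share no common factor (immediate from $c_0=1$ for the first form, and, for the other three, because $c_i'$, $\tilde c_i$, $\tilde c_i'$ are products of the generic linear forms $l_k$ and the irreducible quadrics $g_{j\epsilon}$, no one of which occurs in every entry); this is what ensures $Y$ is an integral hypersurface with a well-defined generic fibre, and the paper records it explicitly.
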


\begin{proof} 
The given forms  are line bundle valued quadratic forms on $\mathcal E$ with values in $\mathcal O_{\CP^n_\C}$ or $\mathcal O_{\CP^n_\C}(1)$, depending on whether the entries of the given form have even or odd degrees, cf.\ Section \ref{subsec:quadricbundles}.
Since the entries of the given quadratic forms are nonzero and have no common factor, they define an integral hypersurface $Y\subset \CP(\mathcal E)$ whose generic fibre over $\CP^n_\C$ is a smooth quadric.
By the construction of $c_i$, $c_i'$, $\tilde c_i$ and $\tilde c_i'$, the forms \begin{align*}
\langle c_0,\dots ,c_{r+1}\rangle, \ \ 
\langle c'_0,\dots ,c'_{r+1}\rangle,\ \ \langle \tilde c_0,\dots ,\tilde c_{r+1}\rangle \  \ \text{and}\ \ \langle \tilde c'_0,\dots ,\tilde c'_{r+1}\rangle ,
\end{align*}
are similar to each other.
Since $2^{n-1}-1\leq r\leq 2^n-2$, each of the above forms is thus a Pfister neighbour of the Pfister form $\langle c_0,c_1,\dots ,c_{2^n-1} \rangle$. 
It then follows from Proposition \ref{prop:examplesofCTO-type} that the generic fibre of $Y\to \CP^n_\C$ is of CTO type.
The fact that $\langle c_0,\dots ,c_{r+1}\rangle$ defines a quadric bundle follows from $c_0=1$ and so $Y$ is flat over $\CP^n_\C$ in this case.
This proves the corollary.
\end{proof}

The following lemma gives some useful estimates for the degrees of the polynomials which appeared in the above corollary. 

\begin{lemma} \label{lem:bounds-degree}
Let $n\geq 2$. 
In the notation of Definitions \ref{def:cici'etc} and \ref{def:citilde}, the following holds:  
 \begin{enumerate}   
 \item $m_0=0$, $m_1=2$ and $m'_0=m_1'=1$;
  \item $\tilde m_i = 2^n+n-1$ for all $i$; \label{item:lem:bounds-degree:mi}
  \item $\tilde m_i' \leq \tilde m_i+1$ for all $i$; \label{item:lem:bounds-degree:mi'}
  \item $\sum_{i=0}^{r+1}  m_i= (r+2)(n+r+1) $ if $r=2^{n}-2$; \label{item:lem:nounds-degree:totaldegree} 
      \item $\sum_{i=0}^{r+1}  m_i \leq 2(r+1)(n+r)$ for all $2^{n-1}\leq r+1<2^n$;
      \label{item:lem:bounds-degree:Ms}
    \item $\sum_{i=0}^{r+1}  m_i'\leq 2(r+1)(n+r)$ for all $2^{n-1}\leq r+1<2^n$. \label{item:lem:bounds-degree:Ms'}
 \end{enumerate}
\end{lemma}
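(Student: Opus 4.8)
The lemma is essentially bookkeeping with the explicit polynomials $c_\epsilon,\tilde c_\epsilon$ of Definitions~\ref{def:cici'etc} and~\ref{def:citilde}, and my plan is to first record their degrees, then trace the bijection $\phi$ on the small indices, and finally split the relevant sums according to the last coordinate. First I would observe that each $l_i$ is linear and, by (\ref{def:hi,gj}), each $g_{j0}$ is a product of two $l_i$'s, so every $g_{j\epsilon}=g_{j0}+\sum_i\epsilon_ih_i$ has degree $2$, whence $g_j=\prod_{\epsilon\in I}g_{j\epsilon}$ has degree $2|I|=2^n$ as $|I|=2^{n-1}$. Writing elements of $\{0,1\}^n$ as $(\epsilon',\epsilon_n)$ with $\epsilon'\in I=\{0,1\}^{n-1}$ and $|\epsilon'|=\sum_{i<n}\epsilon_i$, one reads off from the definitions that
\begin{align*}
|c_{(\epsilon',\epsilon_n)}|=2|\epsilon'|+2^{n+1}\epsilon_n
\qquad\text{and}\qquad
|\tilde c_{(\epsilon',\epsilon_n)}|=(n-1)+2^n ,
\end{align*}
the second being independent of $\epsilon$ since each $l_{2i-1}^{1-\epsilon_i}l_{2i}^{\epsilon_i}$ has degree $1$ and $g_1^{1-\epsilon_n}g_2^{\epsilon_n}$ has degree $2^n$. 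This is item (2), and item (3) is immediate because $\tilde c_i'$ equals $l_1\tilde c_i$ or $l_1^{-1}\tilde c_i$, so $\tilde m_i'=\tilde m_i\pm1$.

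For item (1) I would use $\phi(\epsilon',\epsilon_n)=\phi'(\epsilon')+\epsilon_n2^{n-1}$ together with the length order respected by $\phi'$: the index $0$ forces $\epsilon_n=0$ and $\phi'(\epsilon')=0$, hence $\epsilon'$ is the zero tuple and $c_0=1$; the index $1$ forces $\epsilon_n=0$ (since $2^{n-1}\geq 2$) and $\phi'(\epsilon')=1$, hence $|\epsilon'|=1$ and $c_1=l_1l_2$ by (\ref{eq:phi}) (for $n=2$ this is the only length-one element). Then $c_0'=l_1$ and $c_1'=l_2$ directly from the definition of $c_i'$, giving $m_0=0$, $m_1=2$, $m_0'=m_1'=1$. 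For item (4), note that $r=2^n-2$ means $r+2=2^n$, so $\sum_{i=0}^{r+1}m_i=\sum_{\epsilon\in\{0,1\}^n}|c_\epsilon|$; using $\sum_{\epsilon'\in I}|\epsilon'|=(n-1)2^{n-2}$ and that $2^{n-1}$ of the $\epsilon$ have $\epsilon_n=1$, the degree formula yields $\sum_\epsilon|c_\epsilon|=(n-1)2^n+2^{2n}=2^n(n+2^n-1)$, which is exactly $(r+2)(n+r+1)$.

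For items (5) and (6), set $t:=r+2-2^{n-1}$, so that $1\leq t\leq 2^{n-1}$ under the hypothesis $2^{n-1}\leq r+1<2^n$. Since $\phi(\epsilon',\epsilon_n)\geq 2^{n-1}$ precisely when $\epsilon_n=1$, among the indices $0,\dots,r+1$ the first $2^{n-1}$ have $\epsilon_n=0$ and run bijectively over $I$, while exactly $t$ have $\epsilon_n=1$; bounding $|\epsilon'|\leq n-1$ on the latter gives
\begin{align*}
\sum_{i=0}^{r+1}m_i\;\leq\;\sum_{\epsilon'\in I}2|\epsilon'|+t\bigl(2(n-1)+2^{n+1}\bigr)=(n-1)2^{n-1}+t\bigl(2(n-1)+2^{n+1}\bigr).
\end{align*}
Expanding $2(r+2)(n+r)=2(2^{n-1}+t)(2^{n-1}+t+n-2)$, the inequality of item (5) reduces to $2^{n-1}(3-n)+2t(1-t)\leq 2^{2n-1}$, which holds since the left side is at most $2$ whereas the right side is at least $8$. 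For item (6) one has $|m_i'-m_i|=1$, hence $\sum_{i=0}^{r+1}m_i'\leq\sum_{i=0}^{r+1}m_i+(r+2)$, and the slack in the preceding inequality (at least $2^{2n-1}-2$) comfortably absorbs $r+2\leq 2^n$.

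I expect the only genuinely delicate point to be item (5): bounding each $m_i$ by its maximal value $2(n-1)+2^{n+1}$ uniformly in $i$ is too lossy, and one must instead use that the $2^{n-1}$ lowest indices contribute only the modest sum $\sum_{\epsilon'\in I}2|\epsilon'|=(n-1)2^{n-1}$; after that, verifying $2^{n-1}(3-n)+2t(1-t)\leq 2^{2n-1}$ (true since $3-n\leq 1$ and $t(1-t)\leq 0$ for $t\geq 1$) is routine, and items (1)--(4) and (6) are either direct evaluations or cosmetic variations of the same computation.
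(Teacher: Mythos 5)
Your proof is correct, and for items (5)--(6) it follows the same basic strategy as the paper: split the index set according to $\epsilon_n$, use that the $\epsilon_n=0$ block contributes exactly $\sum_{\epsilon'\in I}2|\epsilon'|=(n-1)2^{n-1}$ (a sum that must not be bounded term-by-term, as you rightly emphasize), and bound each of the remaining $t=r+2-2^{n-1}$ terms by $2(n-1)+2^{n+1}$. The only differences are cosmetic: the paper writes the $\epsilon_n=0$ contribution as $|(l_1\cdots l_{2n-2})^{2^{n-1}}|=2^{n-1}(2n-2)$ (which is an overestimate by a factor of $2$ of the exact value $(n-1)2^{n-1}$, harmless since the inequality is still true) and finishes with the substitution $s=r+1$ together with $i\leq s/2$, $2^{n-1}\leq s$, whereas you expand $2(r+2)(n+r)$ directly in terms of $t$ and reduce to $2^{n-1}(3-n)+2t(1-t)\leq 2^{2n-1}$; and for item (6) the paper says ``same argument'' while you instead use $\sum m_i'\leq\sum m_i+(r+2)$ and absorb the extra $r+2\leq 2^n$ into the slack. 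Both finishing moves are equally elementary and both are valid.
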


\begin{proof}
The first three items are clear.
Item (\ref{item:lem:nounds-degree:totaldegree}) follows from
$$
\sum_{i=0}^{2^n-1}  m_i= |(l_1l_2\cdots l_{2n-2}g_1g_2)^{2^{n-1}}|=2^{n-1}(2^{n+1}+2n-2)=2^n(2^n+n-1).
$$ 

We next aim to prove (\ref{item:lem:bounds-degree:Ms}).
If $r=2^n-2$, then it follows from (\ref{item:lem:nounds-degree:totaldegree}) and so it suffices to treat the case $r\leq 2^n-3$. 
Let thus $r$ be an integer with $2^{n-1}\leq r+1<2^n-1$ and write $r+1=2^{n-1}-1+j$ for some positive integer $j$.
Then we have
\begin{align*}
\sum_{i=0}^{r+1}  m_i 
=|(l_1\dots l_{2n-2})^{2^{n-2}}|+|c_{2^{n-1}}\cdots c_{r+1}|& 
\leq 2^{n-2}(2n-2)+j(2^{n+1}+2n-2)
\\
&\leq (2^{n-2}+j)(2n-2)+ j2^{n+1}.
\end{align*}
Since $r\leq 2^n-3$, we get $j\leq (r+1)/2$.
Using further that $2^{n-2}\leq (r+1)/2$, we obtain
$$
\sum_{i=0}^{r+1}  m_i \leq (r+1)(2n-2)+(r+1)2^{n} \leq (r+1)(2n+2r).
$$ 
This proves (\ref{item:lem:bounds-degree:Ms}). 
Item  (\ref{item:lem:bounds-degree:Ms'}) follows via the same argument, which concludes the lemma.  
\end{proof}

\begin{remark}
At least for small values of $n$, one can work out the integers $m_i,m_i',\tilde m_i$ and $\tilde m_i'$ from Definitions \ref{def:cici'etc} and \ref{def:citilde} explicitly. 
For instance, for $n=2$, we have $\langle c_0,\dots ,c_3\rangle=\langle 1,h_1,g_1g_2,h_1g_1g_2\rangle$ with $|h_1|=2$ and $|g_j|=4$. 
We thus obtain
$$
(m_0,m_1,m_2,m_3)=(0,2,8,10),\ \ (m'_0,m'_1,m'_2,m'_3)=(1,1,9,9) 
$$
$$ 
(\tilde m_0,\tilde m_1,\tilde m_2,\tilde m_3)=(5,5,5,5),\ \ (\tilde m'_0,\tilde m'_1,\tilde m'_2,\tilde m'_3)=(4,6,4,6)  .
$$
\end{remark}

\section{Proof of the main results}

\subsection{Quadric bundles with nontrivial unramified cohomology} 
The following theorem implies Theorem \ref{thm:H_nr} stated in the introduction.

\begin{theorem}\label{thm:H_nr:2}
Let $n$ and $r$ be positive integers with $r\leq 2^n-2$, and let $m$ be the unique integer with $2^{m-1}-1\leq r\leq 2^m-2$. 
Then there is a unirational complex $r$-fold quadric bundle $Y\longrightarrow \CP^n_\C$ with $H^m_{nr}(\C(Y)\slash \C,\mu_2^{\otimes m})\neq 0$.  
\end{theorem}

\begin{proof}
Since $ r+2\leq 2^m$, we may consider the homogeneous polynomials $c_i\in \C[x_0,\dots ,x_m]$ for $i=0,\dots ,r+1$ from Definition \ref{def:cici'etc}. 
Since $c_0=1$, the quadratic form $q=\langle c_0,\dots ,c_{r+1}\rangle$ defines an $r$-fold quadric bundle $Y'\longrightarrow \CP^m_\C$, whose generic fibre is of CTO type, see Corollary \ref{cor:CTO-bundles:1}.
Since $m\leq n$, the quadratic form $q$ defines also an $r$-fold quadric bundle $Y\longrightarrow \CP^n_\C$ which is stably birational to $Y'$.
Since unramified cohomology is a stable birational invariant,
$$
H^m_{nr}(\C(Y)\slash \C,\mu_2^{\otimes m})\cong H^m_{nr}(\C(Y')\slash \C,\mu_2^{\otimes m}) \neq 0 ,
$$
by Proposition \ref{prop:CTO}. 
Finally, $Y$ and $Y'$ are unirational by  Lemma \ref{lem:unirational}, because $c_0=1$ and $c_1=l_1l_2$, where $l_1,l_2\in \C[x_0,x_1,x_2]$ are general linear homogeneous polynomials, see (\ref{eq:li}) and (\ref{eq:phi}).
This proves Theorem \ref{thm:H_nr:2}.
\end{proof}

\begin{remark}
It follows from \cite[Theorem 3.1]{asok} that the quadric bundle $Y\longrightarrow \CP^n_\C$ from Theorem \ref{thm:H_nr:2} satisfies $H_{nr}^i(\C(Y)\slash \C,\mu_2^{\otimes i})=0$ for all $1\leq i\leq m-1$.  
\end{remark}

\subsection{Specialization theorems without resolutions}
 
Recall from Section \ref{subsec:conventions:specialize} what it means that a variety specializes to another variety.
The following specialization theorem is a generalization of Theorem \ref{thm:CTO} stated in the introduction. 

\begin{theorem}\label{thm:CTO:2} 
Let $X$ be a proper variety which specializes to a complex projective variety $Y$.
Suppose that there is a dominant rational map $f:Y\dashrightarrow \CP^n_\C$ with the following properties:
\begin{enumerate} 
\item some Zariski open and dense subset $U\subset Y$ admits a universally $\CH_0$-trivial resolution of singularities $\widetilde U\longrightarrow U$ such that the induced rational map $\widetilde U\dashrightarrow \CP^n_\C$ is a morphism whose generic fibre is proper over $K=\C(\CP^n)$. \label{item:CTO:2:1} 
\item the generic fibre $Y_{\eta}$ of $f$ is stably birational to a quadric of CTO type over $\C(\CP^n)$. 
\label{item:CTO:2:2} 
\end{enumerate}  
Then, no resolution of singularities of $X$ admits an integral decomposition of the diagonal.
In particular, $X$ is not stably rational.
\end{theorem}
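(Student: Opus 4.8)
The plan is to deduce the statement from Proposition \ref{prop:degeneration}, applied to the given degeneration of $X$ to $Y$, with the Rost cycle module $M^\ast=H^\ast(-,\mu_2^{\otimes\ast})$ and the unramified class produced by Propositions \ref{prop:CTO} and \ref{prop:alpha':CTO}. First I would unwind the hypothesis that $X$ specializes to $Y$: there is a discrete valuation ring $R$ with residue field $\C$ and fraction field $\Frac(R)$ admitting an embedding into the ground field of $X$, together with a flat proper morphism $\pi\colon\mathcal X\longrightarrow\Spec R$ whose special fibre is $Y$ and whose generic fibre base changes to $X$. Since $Y$ is a complex variety and $\pi$ is flat and proper, $\pi$ has geometrically integral fibres, so Proposition \ref{prop:degeneration} becomes applicable once its two hypotheses are verified for a suitable resolution of $Y$. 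Its conclusion concerns the geometric generic fibre $\mathcal X\times\overline{\Frac(R)}$; since $X$ is a base change of the generic fibre of $\pi$, a decomposition of the diagonal on a resolution of $X$ would (after base change to an algebraic closure of the ground field) give one on a resolution of $\mathcal X\times\overline{\Frac(R)}$ by a standard spreading-out and specialization argument, as in the reduction at the start of the proof of Proposition \ref{prop:degeneration}. Hence it suffices to verify those two hypotheses. For the last clause, note that a resolution of a stably rational proper variety is smooth, proper and stably rational, hence carries a decomposition of the diagonal by the stable birational invariance recalled in Section \ref{subsec:CH0}; so the first assertion indeed implies that $X$ is not stably rational.

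Next I would build the resolution $\tau\colon\widetilde Y\longrightarrow Y$. Starting from the open $U\subseteq Y$ of hypothesis (\ref{item:CTO:2:1}) with its universally $\CH_0$-trivial resolution $\widetilde U\longrightarrow U$, one takes the closure of the graph of $\widetilde U\longrightarrow Y$ inside the product of a projective compactification of $\widetilde U$ with $Y$. This is a projective variety $\overline{\widetilde U}$ with a proper birational morphism to $Y$ restricting to $\widetilde U\longrightarrow U$ over $U$. A log resolution of $\overline{\widetilde U}$ which is an isomorphism over its regular locus -- in particular over the smooth open subset $\widetilde U$ -- and which turns the complement of $\widetilde U$ into a simple normal crossing divisor yields a smooth projective variety $\widetilde Y$ together with a resolution $\tau\colon\widetilde Y\longrightarrow Y$ such that $\tau^{-1}(U)=\widetilde U$ and $E:=\widetilde Y\setminus\widetilde U$ is a simple normal crossing divisor. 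Thus $\tau^{-1}(U)\longrightarrow U$ is universally $\CH_0$-trivial and every irreducible component $E_j$ of $E$ is smooth and projective; this is the Chow-theoretic half of hypothesis (2) of Proposition \ref{prop:degeneration}.

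For the cohomological input I would consider the dominant rational map $\tilde f:=f\circ\tau\colon\widetilde Y\dashrightarrow\CP^n$, whose generic fibre is birational to $Y_\eta$ and hence stably birational to a CTO type quadric $Q$ over $K:=\C(\CP^n)$ by hypothesis (\ref{item:CTO:2:2}). By Proposition \ref{prop:CTO}, the class $\alpha':=\tilde f^\ast\alpha_1\in H^n_{nr}(\C(\widetilde Y)\slash\C,\mu_2^{\otimes n})$ is nonzero and unramified over $\C$, hence nontrivial because $H^n(\C,\mu_2^{\otimes n})=0$; this is hypothesis (\ref{item:prop:alpha}) of Proposition \ref{prop:degeneration}. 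To complete hypothesis (2) it remains to show $\alpha'|_{E_j}=0$ for all $j$, and by Proposition \ref{prop:alpha':CTO} applied to $\tilde f$ on the normal projective variety $\widetilde Y$ this follows once no $E_j$ dominates $\CP^n$. Here I would use the properness assumption in hypothesis (\ref{item:CTO:2:1}): if the generic point $\eta_j$ of some $E_j$ were sent by $\tilde f$ to the generic point of $\CP^n$, then $K$ would sit inside the discrete valuation ring $\mathcal O_{\widetilde Y,\eta_j}$, and the valuative criterion applied to the proper $K$-scheme $\widetilde U_\eta\longrightarrow\Spec K$ would extend the tautological $K$-morphism $\Spec\C(\widetilde Y)\longrightarrow\widetilde U_\eta$ to a morphism $\Spec\mathcal O_{\widetilde Y,\eta_j}\longrightarrow\widetilde U_\eta\subseteq\widetilde U$; by separatedness of $\widetilde Y$ this extension would coincide with the inclusion $\Spec\mathcal O_{\widetilde Y,\eta_j}\hookrightarrow\widetilde Y$, forcing $\eta_j\in\widetilde U$, a contradiction. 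Hence no $E_j$ dominates $\CP^n$ and $\alpha'|_{E_j}=0$ for every $j$.

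With both hypotheses of Proposition \ref{prop:degeneration} in hand, no resolution of the geometric generic fibre of $\pi$ admits an integral decomposition of the diagonal; by the reduction of the first paragraph the same holds for $X$, and in particular $X$ is not stably rational. I expect the main obstacle to be the geometric bookkeeping in the middle steps: producing the resolution $\widetilde Y$ so that $\tau^{-1}(U)=\widetilde U$ with $E$ a simple normal crossing divisor while keeping $\widetilde Y$ projective, and -- the genuinely delicate point -- ruling out that a component of $E$ dominates $\CP^n$, which is precisely where the hypothesis that the generic fibre of $\widetilde U\longrightarrow\CP^n$ is proper over $K$ must be brought to bear.
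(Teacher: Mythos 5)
Your proposal is correct and follows essentially the same route as the paper: it constructs a resolution $\tau:\widetilde Y\to Y$ with $\tau^{-1}(U)=\widetilde U$ and SNC complement $E$, rules out that a component of $E$ dominates $\CP^n$ using the properness of the generic fibre of $\widetilde U\to\CP^n$, and then feeds Propositions~\ref{prop:CTO} and \ref{prop:alpha':CTO} into Proposition~\ref{prop:degeneration}. The only presentational difference is that you spell out the domination step via the valuative criterion, whereas the paper phrases it as $\widetilde U\to\CP^n$ being proper over a dense open subset of $\CP^n$; the two arguments are equivalent.
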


\begin{proof}
To begin with, note that it suffices to prove the theorem after any extension of the base field of $X$.
Since $X$ specializes to a complex variety, we may thus assume that $X$ is defined over an algebraically closed field of characteristic zero.

Taking a suitable blow-up of some projective closure of $\widetilde U$, we obtain a proper birational morphism $\tau:\widetilde Y\longrightarrow Y$ with $\tau^{-1}(U)=\widetilde U$. 
By assumption (\ref{item:CTO:2:1}), $\tau^{-1}(U) \longrightarrow U$ is a universally $\CH_0$-trivial resolution of $U$.
Replacing $\widetilde Y$ by a log resolution which does not change $\widetilde U$, and which turns the complement $E:=\widetilde Y\setminus \widetilde U$ into a simple normal crossing divisor, we may additionally assume that $\tau$ is a resolution of singularities of $Y$ and each irreducible component $E_i$ of $E$ is smooth.  

By item (\ref{item:CTO:2:1}), $\widetilde U\dashrightarrow \CP^n_\C$ is a morphism which becomes proper when base changed to some open dense subset of $\CP^n_\C$.
Therefore, no component $E_i$ of $E$ dominates $\CP^n_\C$.

By item (\ref{item:CTO:2:2}), $Y_\eta$ is stably birational to a quadric $Q$ of CTO type over  $K=\C(\CP^n)$.
By Definition \ref{def:CTOtype} and 
Proposition \ref{prop:CTO}, there are nonzero elements $a_i,b_j\in K^\ast$ such that $\alpha_1:=(a_1,\dots ,a_{n-1},b_1)\in H^n(K,\mu_2^{\otimes n})$ pulls back to a nontrivial unramified class in $H^n_{nr}(K(Q)\slash \C,\mu_2^{\otimes n})$.
Since unramified cohomology is a stable birational invariant,
$$
0\neq \alpha':=f^\ast \alpha_1 \in H^n_{nr}(K(Y_\eta)\slash \C,\mu_2^{\otimes n}) .
$$
Applying Proposition \ref{prop:alpha':CTO} to the dominant rational map $\widetilde Y\dashrightarrow \CP^n_\C$, we see that $\alpha'|_{E_i}=0$ for any irreducible component $E_i$ of $E$.
Therefore, the assumptions of Proposition \ref{prop:degeneration} are satisfied and so no resolution of singularities of $X$ admits an integral decomposition of the diagonal. 
Since resolutions of singularities exist in characteristic zero, and because stably rational varieties admit integral decompositions of the diagonal (see Section \ref{subsec:CH0}), it follows that $X$ is not stably rational. 
This concludes Theorem \ref{thm:CTO:2}.
\end{proof}

\begin{proof}[Proof of Theorem \ref{thm:CTO}]
Let $X$ be a projective (or proper) variety which specializes to a complex projective variety $Y$ with a morphism $f:Y\longrightarrow S$ to a rational $n$-fold $S$ over $\C$, whose generic fibre is smooth and stably birational to a CTO type quadric $Q$ over $\C(S)$.
We may then consider the smooth locus $U:= Y^{\sm}$ of $Y$ and apply Theorem \ref{thm:CTO:2} to the universally $\CH_0$-trivial resolution $U\longrightarrow U$, given by the identity.
As the generic fibre of $f$ is smooth, the generic fibre of $U\longrightarrow \CP^n_\C$ coincides with $Y_\eta$ and so it is proper.
This shows that Theorem \ref{thm:CTO:2} applies and so $X$ is not stably rational.
This proves Theorem \ref{thm:CTO}.
\end{proof}

Theorem \ref{thm:CTO} has the following consequence.

\begin{corollary} \label{cor:CTO:2}
Let $n$ and $r$ be positive integers with $2^{n-1}-1\leq r \leq 2^n-2$. 
Let $e_0,\dots, e_{r+1}\in \C[x_0,\dots ,x_n]$ be nonzero homogeneous polynomials without common factor, whose degrees $d_i:=|e_i|$ are all odd or all even. 
Suppose that after setting $x_0=1$ and possibly multiplying each entry by some nonzero square, the quadratic form $\langle e_0,\dots ,e_{r+1} \rangle$ becomes similar to one of the quadratic forms $\langle c_0,\dots ,c_{r+1} \rangle$,  $\langle c'_0,\dots ,c'_{r+1} \rangle$, $\langle \tilde c_0,\dots , \tilde c_{r+1} \rangle$ or $\langle \tilde c'_0,\dots , \tilde c'_{r+1} \rangle$ from Corollary \ref{cor:CTO-bundles:1}. 

Then any projective variety 
 which specializes to the complex hypersurface 
$Y\subset \CP(\mathcal E)$ given by $\sum_ie_iz_i^2=0$, where 
$
\mathcal E=\bigoplus_{i=0}^{r+1}\mathcal O_{\CP^n_\C}(-\lfloor d_i\slash 2\rfloor)
$, 
is not stably rational. 
\end{corollary}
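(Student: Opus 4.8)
The plan is to derive the corollary directly from Theorem \ref{thm:CTO}, applied to the hypersurface $Y$ equipped with the morphism $f:Y\longrightarrow \CP^n$ obtained by restricting the projective bundle projection $\CP(\mathcal E)\longrightarrow \CP^n$. Since $\CP(\mathcal E)$ is projective over $\C$, the variety $Y\subset \CP(\mathcal E)$ is projective, and $f$ is a genuine morphism rather than merely a rational map. Moreover the constraints $1\leq r$ and $2^{n-1}-1\leq r\leq 2^n-2$ force $n\geq 2$, so $\CP^n$ is a rational $n$-fold with $n\geq 2$, as Theorem \ref{thm:CTO} requires.

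The main step is to identify the generic fibre $Y_\eta$ of $f$ over $K:=\C(\CP^n)$ with a quadric of CTO type. By construction, $Y_\eta$ is the quadric hypersurface defined over $K$ by $\sum_{i=0}^{r+1}e_iz_i^2=0$; dehomogenizing by setting $x_0=1$, and using that all $d_i$ have the same parity, one finds that $Y_\eta$ is, up to an overall nonzero scalar, the quadric attached to the diagonal form $\langle\bar e_0,\dots,\bar e_{r+1}\rangle$ over $K$, where $\bar e_i$ is the image of $e_i$ after setting $x_0=1$. Multiplying a diagonal entry by a nonzero square yields an isomorphic form, and similar forms define isomorphic quadric hypersurfaces (Section \ref{subsec:quadratic-forms}); hence, by the hypothesis on $\langle e_0,\dots,e_{r+1}\rangle$, the quadric $Y_\eta$ is isomorphic over $K$ to the generic fibre of one of the four quadric bundles of Corollary \ref{cor:CTO-bundles:1}, which is a quadric of CTO type by that corollary. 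In particular, $Y_\eta$ is a Pfister neighbour, hence non-degenerate, hence smooth.

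Two loose ends should be tied off. Since $r\geq 1$, the smooth quadric $Y_\eta$ has positive dimension and is therefore integral, so $\sum_ie_iz_i^2$ is irreducible over $\C[x_0,\dots,x_n]$ --- here one uses that the $e_i$ have no common factor --- and hence $Y$ is indeed a variety, exactly as in the proof of Corollary \ref{cor:CTO-bundles:1}. And the generic fibre of $f$ is smooth and stably birational (in fact isomorphic) to a CTO type quadric over $K=\C(\CP^n)$, which is the remaining hypothesis of Theorem \ref{thm:CTO}.

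With these verifications in place, Theorem \ref{thm:CTO} applies verbatim and shows that any projective variety specializing to $Y$ is not stably rational. I do not anticipate a real obstacle: the only slightly delicate points are the bookkeeping in the dehomogenization that matches $Y_\eta$ with one of $\langle c_i\rangle$, $\langle c_i'\rangle$, $\langle\tilde c_i\rangle$, $\langle\tilde c_i'\rangle$, and the integrality of $Y$. All the substantive input --- that these forms give CTO type quadrics, and that a CTO type generic fibre obstructs stable rationality of any specialization --- is already contained in Corollary \ref{cor:CTO-bundles:1} and Theorem \ref{thm:CTO}.
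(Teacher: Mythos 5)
Your proof is correct and follows essentially the same route as the paper: verify that $Y$ is integral (from $e_i\neq 0$ with no common factor), identify the generic fibre $Y_\eta$ over $\C(\CP^n)$ with a CTO type quadric via Corollary \ref{cor:CTO-bundles:1}, note that it is therefore smooth, and apply Theorem \ref{thm:CTO}. The extra bookkeeping you supply (checking $n\geq 2$, the dehomogenization, similarity/square-multiplication giving isomorphic quadrics) is exactly what the paper leaves implicit.
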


\begin{proof} 
Our assumption on the $e_i$ guarantees that $Y$ is integral, but note that $Y$ is not necessarily flat over $\CP^n_\C$, cf.\ Section \ref{subsec:quadricbundles}.
Nonetheless,  Corollary \ref{cor:CTO-bundles:1} implies that the generic fibre $Y_\eta$ of $Y\longrightarrow \CP^n_\C$ is a quadric of CTO type.  
This fact (or the assumption that $e_i\neq 0$ for all $i$) ensures that $Y_\eta$ is smooth. 
The corollary follows therefore from Theorem \ref{thm:CTO}.
\end{proof}


\subsection{Proof of Theorem \ref{thm:type} and some applications}


\begin{theorem}\label{thm:higherquadrics:2}
Let $n$ and $r$ be positive integers with $2^{n-1}-1\leq r \leq  2^n-2$, and let  $(d_i)_{0\leq i\leq r+1}$ be a tuple of non-negative integers of the same parity.
Consider  the non-negative integers $m_i$, $m_i'$, $\tilde m_i$ and $\tilde m_i'$ from Definitions \ref{def:cici'etc} and \ref{def:citilde}. 
Suppose that one of the following holds:
\begin{enumerate}
\item $d_0$ is even and $d_i\geq m_i $ for all $i$; \label{item:thm:higherquadrics2:mi}
\item  $d_0$ is odd and $d_i\geq m'_i $ for all $i$; 
\label{item:thm:higherquadrics2:mi'}
\item $d_0$ has the same parity as $\tilde m_0$ and  $d_i\geq \tilde m_i $ for all $i$; 
\item   $d_0$ has the same parity as $\tilde m_0'$  and $d_i\geq \tilde m'_i $ for all $i$. 
\end{enumerate}
Then a very general complex $r$-fold quadric bundle of type $(d_i)_{0\leq i\leq r+1}$ over $\CP^n_\C$ (see Definition \ref{def:type}) is not stably rational.
\end{theorem}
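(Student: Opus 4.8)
The plan is to reduce the statement directly to Corollary \ref{cor:CTO:2} via a specialization (degeneration) argument, exactly in the spirit of Lemma \ref{lem:specialize}. Fix $n,r$ and a tuple $(d_i)$ as in one of the four cases; to be concrete, say we are in case (\ref{item:thm:higherquadrics2:mi}), so $d_0$ is even and $d_i\geq m_i$ for all $i$. First I would set up the parameter space: all $r$-fold quadric bundles of type $(d_i)$ over $\CP^n$ are parametrized by an open subset of the projective space $\PP(H^0(\CP^n,\Sym^2(\mathcal E^\vee)\otimes L))$ with $\mathcal E=\bigoplus_{i=0}^{r+1}\OO_{\CP^n}(-\lfloor d_i/2\rfloor)$ and $L$ either $\OO_{\CP^n}$ or $\OO_{\CP^n}(1)$ according to the parity of the $d_i$; this is the same space used in Lemma \ref{lem:rationaldefotype}. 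By Lemma \ref{lem:q=0:quadricbundle}, a very general member $X$ of this family is a smooth quadric bundle over $\CP^n$ — one should check the flatness hypothesis there, i.e.\ that $\binom{r+3}{2}>n$ or that some $L_i\otimes L_j\otimes L$ is trivial; here $d_0$ even and $d_0\geq m_0=0$ together with $c_0=1$ shows the relevant sheaf can be taken trivial, so there is no issue.

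The key step is to produce the degeneration. Since $d_i\geq m_i$ and $d_i\equiv m_i\pmod 2$ for all $i$ (all the $m_i$ being even exactly when $d_0$, hence all $d_i$, are even), I can choose homogeneous polynomials $e_i\in\C[x_0,\dots,x_n]$ of degree $d_i$ of the form $e_i=x_0^{d_i-m_i}\cdot c_i$, where the $c_i$ are the polynomials from Definition \ref{def:cici'etc}. Then $\langle e_0,\dots,e_{r+1}\rangle$, after setting $x_0=1$, is literally $\langle c_0,\dots,c_{r+1}\rangle$, so Corollary \ref{cor:CTO:2} applies to the hypersurface $Y=\{\sum_i e_i z_i^2=0\}\subset\PP(\mathcal E)$: any projective variety which specializes to $Y$ is not stably rational. (One subtlety: I must ensure the $e_i$ have no common factor, which holds because the $c_i$ have no common factor — $c_0=1$ settles this immediately.) The four cases correspond to using $c_i$, $c_i'$, $\tilde c_i$, or $\tilde c_i'$ respectively, with the parity bookkeeping arranged precisely so that $e_i=x_0^{d_i-(\text{degree})}\cdot(\text{polynomial})$ is homogeneous of degree $d_i$; this is exactly why the four parity/inequality hypotheses are stated the way they are.

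The final step is to connect the very general bundle $X$ to the specific degenerate $Y$. Both $X$ and $Y$ are fibres of the universal family $\pi:\mathcal X\to B$ over an open subset $B$ of the parameter projective space (shrinking $B$ so that fibres stay integral — the generic fibre is a smooth quadric bundle, hence integral, so a dense open $B$ works). The point of $B$ corresponding to the chosen $Y$ is some closed point $0\in B$, and the very general point $t\in B$ gives our $X=X_t$. By Lemma \ref{lem:specialize}, $X_t$ specializes to $X_0=Y$ for $t$ very general. Combining with Corollary \ref{cor:CTO:2}, $X_t$ is not stably rational, which is the assertion. The main obstacle I expect is bookkeeping rather than conceptual: one must verify (i) that the chosen $e_i=x_0^{d_i-m_i}c_i$ really has no common factor and defines an \emph{integral} hypersurface so that $Y$ is a variety (again $c_0=1$ makes this transparent), (ii) that $Y$ sits inside $\PP(\bigoplus\OO(-\lfloor d_i/2\rfloor))$ with the \emph{same} bundle $\mathcal E$ as a general bundle of type $(d_i)$ so that $Y$ genuinely lies in the closure of $B$ — this uses that the deformation type of a quadric bundle depends only on the $d_i$, as noted after Lemma \ref{lem:rationaldefotype}, and (iii) checking the parity alignment in all four cases, e.g.\ that $\tilde m_i\equiv \tilde m_0\pmod 2$ so the substitution power $d_i-\tilde m_i$ is even and the resulting $e_i$ is well-defined and of the correct degree. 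None of these is deep, but all are needed for the reduction to Corollary \ref{cor:CTO:2} to be clean.
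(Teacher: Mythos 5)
Your proposal follows the same strategy as the paper --- degenerate a very general bundle of type $(d_i)$ to the explicit singular model $Y=\{\sum e_iz_i^2=0\}$ and invoke Corollary \ref{cor:CTO:2} --- but there is a genuine gap in your verification that the chosen $e_i$ have no common factor. You set $e_i = x_0^{d_i-m_i}c_i$ for \emph{all} $i$, including $i=0$, and then claim that ``$c_0=1$ settles this immediately.'' That reasoning only covers $d_0=0$. If $d_0>0$ then $e_0=x_0^{d_0}$ is divisible by $x_0$; and if in addition $d_i>m_i$ for every $i\geq 1$ --- a situation fully allowed by the hypothesis $d_i\geq m_i$ --- then every $e_i$ is divisible by $x_0$. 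In that case $\{\sum e_iz_i^2=0\}\subset\CP(\mathcal E)$ is reducible, the hypotheses of Corollary \ref{cor:CTO:2} fail, and the corollary really does need $Y$ to be integral (otherwise there is no special fibre variety on which to run Proposition \ref{prop:degeneration}). The same failure occurs in each of the four cases, since $\tilde c_0$, $c'_0$, $\tilde c'_0$ are likewise never divisible by $x_0$, so the power of $x_0$ you multiply in becomes a common factor as soon as all the inequalities are strict.

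The paper avoids this by setting $e_0:=l^{d_0-m_0}\cdot c_0$ for a \emph{general} linear form $l$, distinct from $x_0$, while keeping $e_i:=x_0^{d_i-m_i}c_i$ only for $i\geq 1$. Since $l$ is general, $x_0\nmid e_0$ and $l\nmid e_i$ for $i\geq 1$ (the $c_i$ are built from the already-fixed forms $l_j$ and $g_j$ of Definition \ref{def:cici'etc}), so the collection has no common factor no matter which of the inequalities $d_i\geq m_i$ are strict. Because $d_0-m_0$ (respectively $d_0-m'_0$, $d_0-\tilde m_0$, $d_0-\tilde m'_0$) is even in every case, $l^{d_0-m_0}$ dehomogenizes to a nonzero square, so after setting $x_0=1$ and clearing squares the form still becomes $\langle c_0,\dots ,c_{r+1}\rangle$ (or the primed/tilded variant) as Corollary \ref{cor:CTO:2} requires. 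With this one correction the rest of your argument --- the parameter-space setup, the appeal to Lemma \ref{lem:specialize}, the smoothness check via Lemma \ref{lem:q=0:quadricbundle}, and the parity bookkeeping in the four cases --- is sound and matches the paper's proof.
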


\begin{proof}
Choose a general linear homogeneous polynomial $l\in \C[x_0,\dots ,x_n]$, and let $c_i,c'_i,\tilde c_i$ and $\tilde c_i'$ be as in Definition \ref{def:cici'etc} .

If $d_0$ is even and $d_i\geq m_i $ for all $i$, then consider the homogeneous polynomials 
$$
e_0:=l^{d_0-m_0} \cdot c_0 \ \ \text{and}\ \ e_i:= x_0^{d_i-m_i} \cdot c_i  \ \ \text{for $i=1,\dots ,r+1$}.
$$
Since $l$ is general and the $d_i$ and $m_i$ are even, Lemma \ref{lem:specialize} and Corollary \ref{cor:CTO:2} show that a very general quadric bundle $X$ over $\CP^n_\C$ of type $(d_i)_{0\leq i\leq r+1}$ is not stably rational.

If $d_0$ is odd and  $d_i\geq m'_i $ for all $i$, then replace $c_i$ and $m_i$ by $c_i'$ and $m_i'$, respectively.
Since $m_i'$ is odd for all $i$, we may then argue as before.
If $d_0$ has the same parity as $\tilde m_i$ and $d_i\geq \tilde m_i$ for all $i$, then replace  $c_i$ and $m_i$ by $\tilde c_i$ and $\tilde m_i$, respectively, and argue as before.
If $d_0$ has the same parity as $\tilde m'_i$ and $d_i\geq \tilde m'_i$ for all $i$, then replace  $c_i$ and $m_i$ by $\tilde c'_i$ and $\tilde m'_i$, respectively, and argue as before. 
This finishes the proof of the theorem.
\end{proof}

\begin{proof}[Proof of Theorem \ref{thm:type}]
By Lemma \ref{lem:bounds-degree}, $\tilde m_i=2^n+n-1$ and $\tilde m'_i \leq 2^n+n$ for all $i$, and so Theorem \ref{thm:type} follows from Theorem \ref{thm:higherquadrics:2}.
\end{proof}

\begin{remark} \label{rem:defotype}
By Lemma \ref{lem:q=0:quadricbundle}, all examples in Theorems \ref{thm:type} and \ref{thm:higherquadrics:2} are smooth.
If $r\geq 2$, then all examples 
have rational deformations by Corollary \ref{cor:rational-defotype}.
If $(d_0,d_1)=(m_0,m_1)$ or $(d_0,d_1)=(m_0',m_1')$, 
then the examples in Theorem \ref{thm:higherquadrics:2} are unirational by Lemmas \ref{lem:unirational} and \ref{lem:bounds-degree}.
Unirationality of the examples in Theorem \ref{thm:type} is unknown.
\end{remark}

 \begin{proof}[Proof of Corollary \ref{cor:hyper}]
 The corollary follows from Lemma \ref{lem:singhyper} and Theorem \ref{thm:type}. 
\end{proof}

\begin{corollary} \label{cor:PlxPr}
Let $n$ and $r$ be positive integers with $2^{n-1}-1\leq r\leq 2^n-2$.  
Then a very general complex hypersurface $X\subset \CP^n_\C \times \CP^{r+1}_\C$ of bidegree $(d,2)$ with $d\geq 2^n+n-1$ is not stably rational.
\end{corollary}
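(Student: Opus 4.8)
The plan is to recognize a bidegree-$(d,2)$ hypersurface as a quadric bundle of a very specific type and then quote Theorem~\ref{thm:higherquadrics:2}. Concretely, write $x_0,\dots,x_n$ for the coordinates on the first factor and $z_0,\dots,z_{r+1}$ for those on the second; then a hypersurface $X\subset\CP^n\times\CP^{r+1}$ of bidegree $(d,2)$ is cut out by an equation of the form $\sum_{i,j=0}^{r+1}a_{ij}(x)\,z_iz_j=0$ with $a_{ij}=a_{ji}$ homogeneous of degree $d$ in the $x$-variables. Comparing with~(\ref{eq:X:2}) and Definition~\ref{def:type}, with $L_i=\OO_{\CP^n}$ and $L=\OO_{\CP^n}(d)$ (so $l_i=0$ and $l=d$, whence $d_i=2l_i+l=d$), this is precisely an $r$-fold quadric bundle over $\CP^n$ of type $(d,d,\dots,d)$; moreover a very general hypersurface of bidegree $(d,2)$ is the same thing as a very general quadric bundle of that type, parametrized by an open dense subset of the same space of symmetric matrices of degree-$d$ forms, and by Lemma~\ref{lem:q=0:quadricbundle} (note $\binom{r+3}{2}>n$ for all $n\geq2$ in our range) it is smooth and flat over $\CP^n$.

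Next I would verify that the constant tuple $d_0=\dots=d_{r+1}=d$ satisfies hypothesis~(3) or~(4) of Theorem~\ref{thm:higherquadrics:2}. By Lemma~\ref{lem:bounds-degree}(\ref{item:lem:bounds-degree:mi}) one has $\tilde m_i=2^n+n-1$ for every $i$, and by~(\ref{item:lem:bounds-degree:mi'}) one has $\tilde m_i'\leq 2^n+n$. Observe that $\tilde m_0'$ differs from $\tilde m_0$ by exactly $1$ (one multiplies or divides $\tilde c_0$ by the linear form $l_1$), so $\tilde m_0$ and $\tilde m_0'$ have opposite parities, and all the $\tilde m_i'$ have the same parity, opposite to that of $2^n+n-1$. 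Now distinguish two cases according to the parity of $d$. If $d\equiv 2^n+n-1\pmod2$, then since $d\geq 2^n+n-1=\tilde m_i$ for all $i$, hypothesis~(3) of Theorem~\ref{thm:higherquadrics:2} holds. If $d$ has the opposite parity, then $d$ has the same parity as $\tilde m_0'$, and the smallest integer $\geq 2^n+n-1$ of that parity is $2^n+n$; hence $d\geq 2^n+n\geq \tilde m_i'$ for all $i$, and hypothesis~(4) holds. In either case Theorem~\ref{thm:higherquadrics:2} applies and shows that a very general quadric bundle of type $(d,\dots,d)$ over $\CP^n$ --- equivalently, a very general hypersurface of bidegree $(d,2)$ --- is not stably rational.

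Since this is a direct corollary, there is no serious obstacle. The only point requiring care is the parity bookkeeping: one must check that the single numerical bound $d\geq 2^n+n-1$ suffices to trigger one of conditions~(3),(4) of Theorem~\ref{thm:higherquadrics:2} for each residue of $d$ modulo $2$, which works precisely because the balanced polynomials $\tilde c_i$ of Definition~\ref{def:citilde} have all the same degree $2^n+n-1$, the $\tilde c_i'$ have all degree at most $2^n+n$, and $\tilde m_0$ and $\tilde m_0'$ have opposite parity. One should also double-check that the identification of ``very general bidegree-$(d,2)$ hypersurface'' with ``very general quadric bundle of type $(d,\dots,d)$'' is genuine, which it is since both families are cut out by the same universal symmetric matrix of degree-$d$ forms on $\CP^n$.
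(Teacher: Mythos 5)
Your proposal is correct and follows essentially the same route as the paper: the paper simply quotes Theorem \ref{thm:type} after identifying a bidegree-$(d,2)$ hypersurface with a quadric bundle of type $(d,\dots,d)$ over $\CP^n$ via $\CP(\mathcal E)\cong\CP^n\times\CP^{r+1}$, and you have inlined the (short) deduction of Theorem \ref{thm:type} from Theorem \ref{thm:higherquadrics:2} and Lemma \ref{lem:bounds-degree}, with the parity bookkeeping handled correctly.
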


\begin{proof}
A very general hypersurface of bidegree $(d,2)$ in $\CP^n_\C\times \CP^{r+1}_\C$ is nothing but a very general $r$-fold quadric bundle of type $(d,\dots ,d)$ over $\CP^n_\C$, because in the latter case $\mathcal E=\mathcal O_{\CP^n_\C}(-\lfloor d/2\rfloor)^{\oplus r+2}$ and so $\CP(\mathcal E)\cong \CP^n_\C\times \CP^{r+1}_\C$.
The corollary follows therefore from Theorem \ref{thm:type}. 
\end{proof}

\begin{corollary} \label{cor:doublecover}
Let $n,r$ be integers with $n\geq 2$ and $2^{n-1}-1\leq r \leq 2^n-2$ and put $N:=r+n$. 
Then a double cover of $\CP^{N}_\C$, branched along a very general complex hypersurface $Y\subset \CP^{N}_\C$ of even degree $d\geq 2^{n+1}+2n-2$ and with multiplicity $d-2$ along an $(r-1)$-plane is not stably rational.
\end{corollary}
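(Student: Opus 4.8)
The plan is to reduce, via Lemma \ref{lem:doublePn}, to the non-stable-rationality of a very general quadric bundle of an explicit type over $\CP^n$, and then to quote Theorem \ref{thm:higherquadrics:2}. Write $N=n+r$. Applying Lemma \ref{lem:doublePn} with its ``$d$'' replaced by $d-2$, a double cover of $\CP^N$ branched along a degree-$d$ hypersurface (with $d$ even) singular to order $d-2$ along an $(r-1)$-plane is birational to a general $r$-fold quadric bundle over $\CP^n$ of type $(0,d-2,\dots,d-2,d)$, that is, with $d_0=0$, $d_1=\dots=d_r=d-2$ and $d_{r+1}=d$. Since this identification matches general members to general members (concretely, by completing the square in the $y_0=s$ direction and normalizing $a_{00}=1$), a very general double cover of the prescribed shape is birational to a very general quadric bundle of this type; as stable rationality is a birational invariant, it suffices to show the latter is not stably rational.

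Next I would verify hypothesis (1) of Theorem \ref{thm:higherquadrics:2} for the type $(0,d-2,\dots,d-2,d)$. Note that Theorem \ref{thm:type} cannot be quoted directly here, since $d_0=0<2^n+n-1$; one must use the finer statement and keep track of the individual degrees $m_i$ of Definition \ref{def:cici'etc}. First, $d_0=0$ is even and $m_0=0=d_0$ by Lemma \ref{lem:bounds-degree}(1). For the other indices, recall from Definition \ref{def:cici'etc} that $c_\epsilon=\big(\prod_{i=1}^{n-1}(l_{2i-1}l_{2i})^{\epsilon_i}\big)(g_1g_2)^{\epsilon_n}$ with the $l_j$ linear and $|g_1g_2|=2^{n+1}$, so the sequence $(m_i)_{0\le i\le 2^n-1}$ is non-decreasing, with top two values $m_{2^n-1}=|l_1\cdots l_{2n-2}g_1g_2|=2^{n+1}+2n-2$ and $m_{2^n-2}=2^{n+1}+2n-4$. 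Hence for $1\le i\le r\le 2^n-2$ we get $m_i\le m_{2^n-2}=2^{n+1}+2n-4\le d-2=d_i$, and $m_{r+1}\le m_{2^n-1}=2^{n+1}+2n-2\le d=d_{r+1}$, both using the hypothesis $d\ge 2^{n+1}+2n-2$. Thus $d_i\ge m_i$ for every $i$, so Theorem \ref{thm:higherquadrics:2}(1) applies and yields the conclusion.

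The point requiring the most care is exactly this comparison of the degree bound against the $m_i$: one has to observe that $2^{n+1}+2n-2$ is precisely the largest of the degrees $m_0,\dots,m_{2^n-1}$ (attained only at $i=2^n-1$, which lies outside the range $0\le i\le r$ because $r\le 2^n-2$), so that the single hypothesis on $d$ forces both $d-2\ge m_i$ for all $i\le r$ and $d\ge m_{r+1}$ at once — this is what makes the uniform bound $d\ge 2^{n+1}+2n-2$ work for every admissible $r$. The remaining ingredient, that ``very general'' survives the birational identification of Lemma \ref{lem:doublePn}, is routine and is the same mechanism already used for Corollary \ref{cor:hyper} via Lemma \ref{lem:singhyper}.
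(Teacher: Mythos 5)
Your argument follows the same route as the paper: reduce via Lemma \ref{lem:doublePn} to a quadric bundle of type $(0,d-2,\dots,d-2,d)$ over $\CP^n$ and then apply item (1) of Theorem \ref{thm:higherquadrics:2}. You are in fact more careful than the paper about the degree check: the paper records only the coarse bound $m_i\le 2^{n+1}+2n-2$, whereas your observation that this maximum is attained only at $i=2^n-1$ (which lies outside the range $i\le r\le 2^n-2$), so that $m_i\le 2^{n+1}+2n-4\le d-2$ for $1\le i\le r$ and $m_{r+1}\le 2^{n+1}+2n-2\le d$, is precisely what is needed to make the single hypothesis $d\ge 2^{n+1}+2n-2$ close the comparison at every index.
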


\begin{proof}
By Lemma \ref{lem:doublePn}, we need to prove that a very general $r$-fold quadric bundle of type $(0,d-2,\dots ,d-2,d)$ is not stably rational if $d\geq 2^{n+1}+2n-2$ is even.
This follows from item (\ref{item:thm:higherquadrics2:mi}) in Theorem \ref{thm:higherquadrics:2}, because $m_0=0$ and $m_i\leq |l_1l_2\dots l_{2n-2}g_1g_2| = 2^{n+1}+2n-2$ for all $i$. 
\end{proof}

\subsection{Proof of Theorem \ref{thm:classification:2}} 

\begin{proof}[Proof of Theorem \ref{thm:classification:2}]
By Theorem \ref{thm:higherquadrics:2} and Remark \ref{rem:defotype}, there are many smooth unirational complex $r$-fold quadric bundles $Y\longrightarrow \CP^{m}_\C$ which are not stably rational. 
The product $X:=Y\times \CP^{n-m}_\C$ is then a smooth unirational complex $r$-fold quadric bundle over $S=\CP^{n-m}_\C\times \CP^m_\C$ which is not stably rational. 
This proves the theorem.
\end{proof}

\begin{remark}\label{rem:classification}
In the proof of Theorem 
\ref{thm:classification:2}, it is essential that Theorem \ref{thm:higherquadrics:2} yields smooth $r$-fold quadric bundles over rational bases which are not stably rational, non-rationality would not be enough.
\end{remark}

\begin{remark}\label{rem:asok} 
The cases $r=1,2$ in  Theorem 
 \ref{thm:classification:2}
 follows from \cite{voisin,HKT} and \cite{HPT}, respectively. 
If one allows singular bundles, the result follows from \cite{artin-mumford,CTO} if $r\leq 6$.
If $r\geq 7$, then the result is new even without the smoothness assumption. 
\end{remark}

\subsection{Proof of Theorem \ref{thm:defo}}  

\begin{theorem}\label{thm:higherquadric} 
Let $n,r$ and $d$ be integers, with $d$ even if $r$ is even, and such that $n\geq 2$,   
$2^{n-1}-1\leq r \leq 2^{n}-2$  
and $d\geq 2(n+r)(r+1)$. 

There is a smooth complex projective family  $\pi:\mathcal X\longrightarrow B$ over a complex variety $B$, such that each fibre $X_b=\pi^{-1}(b)$ is a smooth $r$-fold quadric bundle over $\CP^n_\C$, degenerated over a hypersurface of degree $d$ in $\CP^n_\C$, satisfying the following:  
\begin{enumerate} 
\item for very general $t\in B$, the $r$-fold quadric bundle $X_t$ over $\CP^n_\C$ is not stably rational;\label{item:nonrational}
\item all fibres of $\pi$ are unirational and, if $r\geq 2$, then some fibres are rational;\label{item:unirational+rational}
\item  if $r\geq 3$ and $d$ is even, the set $\{ b\in B\mid X_b\ \text{is rational}\}$ is dense in $B$ for the analytic topology.\label{item:density}
\end{enumerate} 
\end{theorem}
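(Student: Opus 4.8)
The plan is to build the family $\pi:\mathcal X\longrightarrow B$ directly from the CTO type quadric bundles of Corollary \ref{cor:CTO-bundles:1}, realizing the latter as a special member of a linear system whose very general member is a smooth quadric bundle over $\CP^n$ degenerated along a hypersurface of the prescribed degree $d$. Concretely, I would fix $k$ with $2^{k-1}-1\le r\le 2^k-2$ — in the relevant range $k=n$ — and start from the quadratic form $\langle c_0,\dots ,c_{r+1}\rangle$ of Definition \ref{def:cici'etc}, which by Lemma \ref{lem:bounds-degree}(\ref{item:lem:bounds-degree:Ms}) has $\sum_i m_i\le 2(r+2)(n+r)$, so that after multiplying the $i$-th entry by a power of a general linear form $l$ (to homogenize the degrees and reach total degeneration degree $d$, using $d\ge 2(n+r)(r+2)$ and the parity hypothesis) we obtain a line bundle valued quadratic form on $\mathcal E=\bigoplus_{i=0}^{r+1}\mathcal O_{\CP^n}(-\lfloor d_i/2\rfloor)$ whose discriminant hypersurface has degree $d$. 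Let $B$ be the open subset of $\PP H^0(\CP^n,\Sym^2\mathcal E^\vee\otimes L)$ parametrizing smooth $r$-fold quadric bundles of this type — nonempty and open by Lemma \ref{lem:q=0:quadricbundle}(1),(2) since $\binom{r+3}{2}>n$ in this range — and let $\mathcal X\subset B\times\CP(\mathcal E)$ be the universal family, which is smooth and projective over the smooth base $B$. The special member $X_0$ corresponding to $\langle e_0,\dots ,e_{r+1}\rangle$ with $e_i$ obtained from the $c_i$ as in the proof of Theorem \ref{thm:higherquadrics:2} lies in $B$ (it is smooth) and its generic fibre over $\CP^n$ is of CTO type.

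For item (\ref{item:nonrational}), I would invoke Lemma \ref{lem:specialize}: since $\pi$ is flat and proper with integral fibres, every very general $t\in B$ has $X_t$ specializing to $X_0$; and $X_0$ is a quadric bundle over $\CP^n$ whose smooth generic fibre is of CTO type, so Theorem \ref{thm:CTO} (or Corollary \ref{cor:CTO:2}) gives that $X_t$ is not stably rational. For the first half of item (\ref{item:unirational+rational}), I would use that the construction can be arranged so that $c_0=1$ and $c_1=l_1l_2$ with $l_1,l_2\in\C[x_0,x_1,x_2]$ general (conditions (\ref{eq:li}), (\ref{eq:phi})), hence each fibre, being a quadric bundle with a section coordinate carrying $e_0=l^{d_0}$ and $e_1$ a product of two linear forms over $\CP^n$, is unirational by Lemma \ref{lem:unirational}(1) — this statement about the generic fibre then propagates to all fibres because unirationality of the generic member of a family of quadric bundles with these degree data is an open-dense property, or alternatively one checks it entry-by-entry. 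The second half of (\ref{item:unirational+rational}), rationality of some fibres when $r\ge 2$, follows from Corollary \ref{cor:rational-defotype} (equivalently Lemma \ref{lem:rationaldefotype}): since $r\ge 2\ge$... more precisely since in this regime one can exhibit a diagonal member with $a_{mm}=0$ for some $m$ inside $B$, giving a smooth rational quadric bundle — one must only check such a member has the correct degeneration degree $d$, which is a matter of choosing the remaining entries' degrees.

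The main obstacle is item (\ref{item:density}): the analytic density of the rational locus. For $r=2$ this is exactly Voisin's density statement recalled in Proposition \ref{prop:voisin}, via the identification of the family of type $(0,d-2,\dots ,d-2,d)$ quadric surface bundles with (blow-ups of) double covers in Lemma \ref{lem:doublePn} and the fact that the integral Hodge conjecture holds for codimension two cycles on quadric surface bundles over surfaces. For $r\ge 3$ the plan is to reduce to this case: inside $B$ one finds a subfamily of quadric bundles that are, fibrewise over a locus in $\CP^n$, iterated quadric bundles built from a quadric surface bundle over $\CP^2$ (embedding a type-$(0,2,2,4)$-style building block), so that whenever the surface bundle member acquires a rational section — which happens on an analytically dense set by Proposition \ref{prop:voisin} — the whole $r$-fold bundle acquires a rational multisection of odd degree and hence is rational by Springer's theorem \cite{springer}. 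Making this reduction precise — identifying the right subfamily, controlling the degeneration degree so it stays equal to $d$, and checking that a rational section of the surface sub-bundle indeed produces an odd-degree multisection of the ambient bundle — is the technical heart of the argument; the parity/degree bookkeeping (why $d$ even is needed, why $d\ge 2(n+r)(r+2)$ suffices) is routine by comparison and I would defer it to the explicit estimates of Lemma \ref{lem:bounds-degree}.
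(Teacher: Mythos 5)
Your plan for items (1) and (2) tracks the paper closely: define the degree tuple $(d_i)$ via the $m_i$ (or $m_i'$) so the total degree is $d$, take the universal family over the open locus of smooth quadric bundles, specialize to the CTO-type bundle and apply Corollary~\ref{cor:CTO:2}/Theorem~\ref{thm:CTO} (using Lemma~\ref{lem:specialize}), get unirationality from $c_0=1$, $c_1=l_1l_2$ and Lemma~\ref{lem:unirational}, and get rational members from Lemma~\ref{lem:rationaldefotype}. That part is essentially the paper's argument.

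The gap is in item (3), and it is not merely a deferred technicality. You define $B$ as the open subset of the \emph{full} linear system $\CP\bigl(H^0(\CP^n,\Sym^2\mathcal E^\vee\otimes L)\bigr)$, and then for the density statement you propose to ``find a subfamily inside $B$'' containing a quadric-surface-bundle building block. But Proposition~\ref{prop:voisin} yields analytic density of rational members only in the parameter space $\CP(W)$ of type-$(0,2,2,4)$ quadric surface bundles over $\CP^2$; pulling this back to a \emph{subfamily} of your $B$ would give density in that subfamily, not in $B$. The paper avoids this by \emph{replacing} the base: it takes $V\subset V'$ to be the linear subspace of matrices $A=(a_{ij})$ with $a_{ij}\in\C[x_0,x_1,x_2]$ for all $i,j\in\{0,1,2,n\}$ and $a_{i0}=0$ for $i=1,2,n$ when $d$ is even and $n\geq 3$, and sets $B\subset\CP(V)$. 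Then assigning to $A$ the $4\times4$ minor $M(A)=(a_{ij})_{i,j\in\{0,1,2,n\}}$ gives a \emph{dominant morphism} $M\colon B\to\CP(W)$, along which the dense set from Proposition~\ref{prop:voisin} pulls back. The degree bookkeeping $(d_0,d_1,d_2,d_n)=(0,2,2,4)$ is forced by the normalizations (\ref{eq:li}), (\ref{eq:phi}) (which you cite only for unirationality); note also that $r\geq3$ guarantees $n\geq3$ so the index $n$ is distinct from $0,1,2$.

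Finally, the mechanism by which a rational section propagates is simpler than you suggest: since $a_{ij}\in\C[x_0,x_1,x_2]$ for $i,j\in\{0,1,2,n\}$, a $\C(\CP^2)$-rational point of the generic fibre of $\{M(A)=0\}\to\CP^2$ is, after the inclusion $\C(\CP^2)\hookrightarrow\C(\CP^n)$ and setting $z_j=0$ for $j\notin\{0,1,2,n\}$, already a $\C(\CP^n)$-rational point of the generic fibre of $\{A=0\}\to\CP^n$. So the ambient bundle acquires an actual rational \emph{section}, not just an odd-degree multisection; there is no iteration of quadric bundles and Springer's theorem is not needed at this final step (it is used only inside Proposition~\ref{prop:voisin}).
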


\begin{proof} 
We first define some non-negative integers $d_i$ for $i=0,\dots ,r+1$ of the same parity 
and use the notation from Definition \ref{def:cici'etc}.
If $d$ is even, we put
$d_i:=m_i$ for $i=0,\dots ,r$ and $d_{r+1}:=d-\sum_{i=0}^r m_i$.
If $d$ is odd, we define
$d_i:=m'_i$ for $i=0,\dots ,r$ and $d_{r+1}:=d-\sum_{i=0}^r m'_i$.
Since $d\geq 2(n+r)(r+1)$, Lemma \ref{lem:bounds-degree} ensures $d_{r+1}\geq m_{r+1}$ and $(d_0,d_1)=(0,2)$ if $d$ is even, and $d_{r+1}\geq m_{r+1}'$ and $(d_0,d_1)=(1,1)$ if $d$ is odd. 

Let $
\mathcal E^\vee:=\bigoplus_{i=0}^{r+1} \mathcal O_{\CP^n_\C}(\lfloor d_i/2\rfloor) 
$
and consider the complex vector space $V':=H^0(\CP^n_\C,\Sym^2(\mathcal E^\vee)\otimes \mathcal O_{\CP^n_\C}(d_0))$. 
We identify points in $V'$ with symmetric matrices $A=(a_{ij})_{0\leq i,j\leq r+1}$.
To such a matrix, we associate the minor $M(A):=(a_{ij})_{i,j\in \{0,1,2,n\}}$, which is a symmetric $4\times 4$ matrix.
We define $V\subset V'$ as the linear subspace given by all symmetric matrices $A=(a_{ij})$ with $a_{ij}\in \C[x_0,x_1,x_2]$ for all $i,j\in \{0,1,2,n\}$, and such that $a_{i0}=0$ for $i=1,2,n$ if $d$ is even and $n\geq 3$.  
We let $B\subset \CP(V)$ be the subset of points $[A]\in \CP(V)$ such that $A$ defines a smooth $r$-fold quadric bundle of type $(d_i)_{0\leq i\leq r+1}$ over $\CP^n_\C$; if $n\geq 3$, then we also assume that $M(A)$ defines a smooth quadric surface bundle of type $(d_0,d_1,d_2,d_n)$ over $\CP^2_\C$. 
By Bertini's theorem, $B$ is an open dense subset of $\CP(V)$. 
There is a universal hypersurface $\mathcal X\subset B\times \CP(\mathcal E)$.
Projection to the first factor gives a smooth projective morphism $\pi:\mathcal X\longrightarrow B$ of complex varieties.
The fibre $X_b$ above $b\in B$ is a smooth $r$-fold quadric bundle over $\CP^n_\C$, which degenerates over a hypersurface of degree $d$ in $\CP^n_\C$. 
Let $t\in B$ be very general.
Since $c_0,c_1,c_2,c_n,c'_0,c'_1,c_2',c_n'\in \C[x_0,x_1,x_2]$ by (\ref{eq:li}) and (\ref{eq:phi}), $X_t$ specializes by Lemma \ref{lem:specialize} to the hypersurface $Y\subset \CP(\mathcal E)$, given by $\sum_{i=0}^{r+1}e_iz_i^2=0$, where $e_i=c_i$ (resp.\ $e_i=c_i'$) for $i=0,\dots ,r$ and $e_{r+1}=x_0^{d_{r+1}-m_{r+1}}c_{r+1}$ (resp.\ $e_{r+1}=x_0^{d_{r+1}-m_{r+1}'}c_{r+1}'$), if $d$ is even (resp.\ odd), and 
where we use the notation from Definition \ref{def:cici'etc}.
It thus follows from Corollary \ref{cor:CTO:2} that $X_t$ is not stably rational.
This proves item (\ref{item:nonrational}).

Recall $(d_0,d_1)\in \{(1,1),(0,2)\}$. 
Up to replacing $B$ by some open dense subset which is given by a certain genericity assumption on $(a_{ij})_{0\leq i,j\leq 1}$, Lemma \ref{lem:unirational} thus ensures that all fibres of $\pi$ are unirational.
If $r\geq 2$, then our assumptions imply $r\geq n$.
As in Lemma \ref{lem:rationaldefotype}, Bertini's theorem shows then that we may additionally assume that $B$ contains points which correspond to matrices $A=(a_{ij})$ with $a_{r+1,r+1}=0$. 
The corresponding quadric bundles admit sections and so they are rational.
This proves item (\ref{item:unirational+rational}).

Let us now assume that $r\geq 3$ and $d$ is even. 
Then, $n\geq 3$ and $(d_0,d_1,d_2,d_n)=(0,2,2,4)$ (see (\ref{eq:phi})) and we consider the  vector space $W$ of symmetric $4\times 4$ matrices from Proposition \ref{prop:voisin}.
There is a dominant morphism 
$$
M:B\longrightarrow \CP(W),\ \ [A]\mapsto [M(A)].
$$ 
If the complex quadric surface bundle over $\CP^2_\C$ which is defined by $M(A)$ admits a rational section, then the complex $r$-fold quadric bundle over $\CP^n_\C$ defined by $A$ admits a rational section as well.
By Proposition \ref{prop:voisin}, the set of points $[M(A)]\in \CP(W)$ with that property is dense for the analytic topology.
This proves item (\ref{item:density}), i.e.\ $\{b\in B\mid \text{$X_b$ is rational}\}$ is dense in $B$ for the analytic topology.
This concludes Theorem \ref{thm:higherquadric}. 
\end{proof}

\begin{proof}[Proof of Theorem \ref{thm:defo}]
The case $r=1,2$ follows from \cite{HPT}, because the examples treated there (general hypersurfaces of bidegree $(2,2)$ in $\CP^2_\C\times \CP^3_\C$) are both, quadric surface bundles over $\CP^2_\C$, as well as conic bundles over $\CP^3_\C$, see also \cite[Remark 10]{HPT}.
The case $r\geq 3$ follows from Theorem \ref{thm:higherquadric}.
\end{proof}

\begin{remark}\label{rem:parity:d}
The restriction on the parity of $d$ if $r$ is even is necessary in Theorem \ref{thm:higherquadric}.
Indeed, the Fano variety of $m$-planes on a smooth quadric of dimension $2m$ has two connected components, and so any smooth $2m$-fold quadric bundle over $\CP^n_\C$ gives rise to a double cover of $\CP^n_\C$, branched along the degeneration divisor.
This forces the degree of the degeneration divisor to be even.
\end{remark}

\begin{remark}\label{rem:density}
It is conceivable that item (\ref{item:density}) in Theorem \ref{thm:higherquadric} holds for $r\geq 2$ and without the restriction on the parity of $d$.
To prove this, it would be enough to generalize Proposition \ref{prop:voisin} to other quadric surface bundles of type $(d_0,\dots ,d_3)$ over $\CP^2_\C$. 
Even though this problem seems tractable with the existing methods, we do not try to pursue this here.
\end{remark}

\section*{Acknowledgements}  
I am very grateful to J.-L.\ Colliot-Th\'el\`ene and to the excellent referee for carefully reading this paper and for many comments which significantly improved the exposition.
Thanks to B.\ Totaro for a reference and to A.\ Asok for useful comments.
I had very useful conversations about topics related to this paper with several people, including R.\ Beheshti, O.\ Benoist and L.\ Tasin.
The results of this article where conceived while the author was member of the SFB/TR 45 ``Periods, Moduli Spaces and Arithmetic of Algebraic Varieties''.


\end{document}